\documentclass[a4paper,11pt]{article}

\usepackage[latin1]{inputenc}				
\usepackage[english]{babel}					
\frenchspacing										
\usepackage{indentfirst}						
\usepackage{color}								

\usepackage{datetime} 

\usepackage{hyperref}
\hypersetup{colorlinks,linkcolor=red,citecolor=blue,urlcolor=green}

\setcounter{tocdepth}{2}

\usepackage{graphicx}
\usepackage{subfigure}

\usepackage[top=2.7cm, bottom=3.8cm, left=2.8cm, right=2.8cm]{geometry}		

\usepackage{amsmath}
\usepackage{amsthm}							
\usepackage{amsfonts}							
\usepackage{amssymb}							

\newtheorem{definition}{Definition}[section]				
\newtheorem{theorem}[definition]{Theorem}				
\newtheorem{proposition}[definition]{Proposition}
\newtheorem{corollary}[definition]{Corollary}

\theoremstyle{definition}											
\newtheorem{remark}[definition]{Remark}


\newcommand{\N}{\mathbb{N}}																				
\newcommand{\cN}{\mathcal{N}}																				%
\newcommand{\Z}{\mathbb{Z}}																					
\newcommand{\R}{\mathbb{R}}																					
\newcommand{\F}{\mathcal{F}}																					
\renewcommand{\H}{\mathcal{H}}																				
\renewcommand{\S}{\mathcal{S}}																				
\newcommand{\bE}{\mathbb{E}}																				
\newcommand{\E}{\mathcal{E}}																					
\newcommand{\abs}[1]{\lvert#1\rvert }																		
\newcommand{\Abs}[1]{\left\lvert#1\right\rvert }														
\newcommand{\norm}[1]{\lVert#1\rVert}																	
\newcommand{\Norm}[1]{\left\lVert#1\right\rVert}													
\newcommand{\half}{\frac{1}{2}}																				
\newcommand{\scalar}[2]{\left\langle #1 \big| #2 \right\rangle}								
\newcommand{\goesto}{\rightarrow}																			

\newcommand{\ti}{{t_{i}}}
\newcommand{\tip}{{t_{i+1}}}
\newcommand{\tni}{{t^n_{i}}}
\newcommand{\tnip}{{t^n_{i+1}}}

\newcommand{\wttnip}{{\wt t^n_{i+1}}}
\newcommand{\ip}{{i+1}}
\newcommand{\Hip}{H_{i+1}}

\newcommand{\hip}{h_{i+1}}
\newcommand{\hnip}{h^n_{i+1}}
\newcommand{\wthnip}{\wt h^n_{i+1}}
\newcommand{\hmax}{h_{\text{max}}}

\newcommand{\wh}[1]{\widehat{#1}}
\newcommand{\wt}[1]{\widetilde{#1}}
\newcommand{\wb}[1]{\overline{#1}}

\newcommand{\GMonGr}{(GMonGr) }
\newcommand{\MonY}{(MonY) }
\newcommand{\LipZ}{(LipZ) }
\newcommand{\GrowthY}{(GrY) }
\newcommand{\RegY}{(RegY) }
\newcommand{\kDS}{(k-dSn) }
\newcommand{\kdSn}{(k-dSn) }
\newcommand{\kdSc}{(k-dSc) }
\newcommand{\fzero}{($f(0,0)=0$) }%

\newcommand{\TimeDiscScheme}{\eqref{equation---reference--ATS.scheme}}

\title{Adapted time-steps explicit scheme for monotone BSDEs}
\author{ Arnaud LIONNET \\         
		\footnotesize CMLA, Ecole Normale Supérieure de Cachan}
\date{3/11/2016} 

\begin{document}
\maketitle

\begin{abstract}
	We study the numerical strong stability of explicit schemes for the numerical approximation of the solution to a BSDE where the driver has polynomial growth in the primary variable 
	and satisfies a monotone decreasing condition, and we introduce an explicit scheme with adapted time-steps that guarantee numerical strong stability. 
	We then prove the convergence of this scheme and illustrate it with numerical simulations.
\end{abstract}

{\bf Keywords :}
Numerical methods, BSDEs, explicit schemes, monotonicity condition, polynomial growth driver, numerical strong stability, non-explosion, size bounds, comparison property, adapted time-steps.

{\bf 2010 AMS subject classifications:} 
65C30, 60H35,
60H30.

{
\hypersetup{colorlinks,citecolor=blue,linkcolor=red} 
\tableofcontents
}

\section{Introduction}

In this paper, we study the qualitative and quantitative properties of explicit numerical methods for backward stochastic differential equations (BSDEs) for a certain class of drivers.
Since the seminal papers of Zhang \cite{Zhang2004} and Bouchard and Touzi \cite{BouchardTouzi2004}, an important literature has 
been concerned with 
the numerical methods for approximating the solution to a nonlinear BSDE 
	\begin{align*}
		Y_t = \xi + \int_t^T f(Y_u,Z_u) du - \int_t^T Z_u dW_u ,
	\end{align*}
where $f$ is the driver, $\xi$ is the terminal condition, $W$ is the underlying, $d$-dimensional Brownian motion, $T>0$ is the time-horizon, and $(Y,Z)$ is the sought solution, adapted to the augmented Brownian filtration, and valued in $\R^k \times \R^{k \times d}$.
Mirroring the development of the general theory for nonlinear BSDEs, this literature has initially focused on the class of drivers that are Lipschitz in both of their $Y$ and $Z$ variables, with square-integrable terminal conditions: \cite{GobetLemorWarin2006}, \cite{BenderDenk2007}, \cite{CrisanManolarakis2012}, \cite{CrisanManolarakis2014}, \cite{ChassagneuxCrisan2014}, \cite{Chassagneux2014}, \cite{BriandLabart2014}, \cite{PagesSagna2015}, \cite{GobetTurkedjiev2016}, among others, have provided analysis of the standard implicit and explicit schemes (also known as backward Euler schemes, Bouchard--Touzi--Zhang schemes, or Euler--BTZ schemes) as well as other time-discretization schemes, and of various methods for approximating the conditional expectations.

In many cases of interest however, the driver has a superlinear growth in one of its variables. 
BSDEs arising in stochastic control problems often have quadratic growth in the $Z$ variable (or control variable), see \cite{BielagkLionnetDosReis2015} or \cite{EliePossamai2016} for recent instances. 
Meanwhile, the BSDEs connected to PDEs of reaction-diffusion type like the Allen--Cahn equation, the FitzHugh--Nagumo equations, the Fisher--KPP equation, etc (see 
\cite{Rothe1984}), tend to have a driver with polynomial growth in the $Y$ variable (or state variable) and to satisfy a so-called monotonicity condition (or one-sided Lipschitz condition).
This is a structure property which states that for all $y,y'$ in the domain $\R^k$ and for all $z \in \R^{k \times d}$,
	\begin{align*}		
		\scalar{  y'-y }{  f(y',z)-f(y,z)  } \le M_y \abs{y'-y}^2 ,
	\end{align*}
where $M_y \in \R$. 
In the case of drivers that are quadratic in the control variable and of bounded terminal condition, Chassagneux and Richou \cite{ChassagneuxRichou2016} recently established the convergence of a fully implicit scheme with modified driver (see also \cite{ImkellerReis2010,Richou2011} for earlier results). 
The case of monotone drivers with polynomial growth in the state variable and terminal conditions with all moments, was studied in \cite{LionnetDosReisSzpruch2015}, where it was proved that the standard implicit scheme converges. The standard explicit scheme however may explode, for unbounded terminal conditions. 
As a remedy, in order to obtain a modified explicit scheme (also referred to as tamed explicit scheme, see \cite{HutzenthalerJentzenKloeden2011}) with the same convergence rate as the standard implicit scheme but the computational cost of an explicit scheme, one solution is to truncate the terminal condition with a truncation that relaxes to the identity as the modulus of the time-discretization grid goes to $0$ (see \cite{LionnetDosReisSzpruch2015}). 
Another is to replace the driver by a modified driver that converges to the original driver as the modulus of the time-discretization grid goes to $0$ (see \cite{LionnetDosReisSzpruch2016}).

\paragraph*{}

However the convergence of the scheme, and its convergence rate, are only asymptotic properties. 
In practice, it can be more interesting to obtain a scheme with a qualitatively good behaviour and a decent error for a low computational budget, rather than having very good results guaranteed only for high-enough computational efforts. 
Generally speaking, we will say that a scheme is numerically stable if it preserves some properties (qualitative or quantitative) enjoyed by the continuous-time BSDE.
Chassagneux and Richou \cite{ChassagneuxRichou2015} thus studied the numerical stability in the sense of preserving the following size estimate: when the driver is truly monotone (i.e. satisfies the property with at least $M_y = 0$), satisfies $f(0,0)=0$ and the terminal condition is bounded, one has $\abs{Y_t} \le \norm{\xi}_\infty$ at all times.
They study for both of the standard schemes (explicit and implicit), in the case of Lipschitz drivers, the precise conditions on the time-steps under which such a bound is preserved. They were also able to cover the standard implicit scheme in the case of polynomial growth drivers.

In this work, were are interested in polynomial growth drivers and the numerical stability properties of the explicit scheme. In particular, we are interested in the preservation of two properties. 
One is the fact that, for strictly monotone drivers (i.e. $M_y<0$) we have in fact for all $t \in [0,T]$ the stronger bound $\abs{Y_t} \le e^{M_y (T-t)} \norm{\xi}_\infty$. We call the preservation of this bound strong (or strict) numerical stability. 
The second is the comparison property, stating that if we have ordered terminal conditions $\xi \le \xi'$, then $Y \le Y'$. As explained in \cite{LionnetDosReisSzpruch2016}, the preservation of this property is crucial when the driver is monotone only on a domain $D \subseteq \R^k$ in which the continuous-time solution is known to remain.

This study naturally leads to an explicit scheme with a non-uniform time-grid, which adapts to both the terminal condition and the driver. This scheme, called Adapted Time-Steps scheme, is numerically stable by design. We also show its convergence. While the total error of a converging, first-order scheme is usually found to be bounded above by some $e^{C \, T} / n$, for some constant $C \ge 0$ and a number $n$ of time-steps (possibly for $n$ large enough), we here aim to obtain an upper bound as good as possible: we can bound the error by some $e^{C^n \, T} / n$, where the constants $C^n$ have a limit $C' < 0$. 
We thus obtain an explicit scheme that allocates the computational effort non-uniformly over $[0,T]$, with more dates toward $T$, and so guarantees good qualitative and behaviour as well as having good quantitative properties.

\paragraph*{}
The next section is devoted to stating precisely the conditions governing the class of drivers we consider, recalling known results for the associated (continuous-time) BSDEs, and stating the adapted time-steps (ATS) grid. 
Section \ref{section---numerical.stability} is then concerned with the analysis of the numerical stability of the scheme, both in terms of size bounds and comparison property. 
The convergence is then proved in section \ref{section---convergence.of.the.scheme}. 
After that we consider in sections \ref{section---extension.to.unbounded.term.cond.} and \ref{section---extensions.to.overall-monotone.drivers} two extensions of our working assumptions and show how to adapt the scheme in these situations. 
The first one is when the terminal condition is unbounded, in which case we can do a fading truncation of the terminal condition. 
The second one is when the driver is ``monotone'' in the wider sense, with $M_y \ge 0$, but satisfies a monotone growth condition $\scalar{y}{f(y,z)} \le M^a_y \abs{y}^2$ with $M^a_y < 0$ when $y$ is away from the origin.
In section \ref{section---numerical.simulations} we provide numerical simulations to illustrate the qualitative and quantitative behaviour of the ATS scheme.

\section{Preliminaries}		\label{section---preliminaries}

We work on a filtered probability space $(\Omega,\F=(\F_t)_{t \in [0,T]},P)$ carrying a standard Brownian motion $W$, whose augmented natural filtration is $\F$.


For fixed $x_0 \in \R^d$ we consider the system of forward and backward SDEs, for $0 \le t \le T$,  
	\begin{align}
		\label{equation---reference--SDE}		
		X_t &= x_0 + \int_0^t \mu(s,X_s) ds + \int_0^t \sigma(s,X_s) dW_s			\\
		\label{equation---reference--BSDE}
		Y_t &= g(X_T) + \int_t^T f(Y_u, Z_u) du - \int_t^T Z_u dW_u .
	\end{align}

In all generality, the driver of the BSDE can depend on the time $u$ and $X_u$. However, as can be seen from \cite{LionnetDosReisSzpruch2015}, 
this dependence can be dealt with in the numerical analysis but makes for much heavier computations without bringing much interesting results and behaviour in the picture. So we only assume a dependence of $f$ in the variables $Y$ and $Z$.

\subsubsection*{Assumptions.}

\paragraph*{Forward equation.} 
We take $\mu : [0,T] \times \R^d \rightarrow \R^d$ and $\sigma : [0,T] \times \R^d \rightarrow \R^{d \times d}$ to be $\half$-Hölder in their time-variable and Lipschitz in space-variable. 

\paragraph*{Backward equation.} 
We assume $g : \R^d \rightarrow \R^k$ to be Lipschitz and bounded, so that the terminal condition $\xi = g(X_T)$ is bounded and $\Norm{\xi}_\infty \le \Norm{g}_\infty$. 
The function $f : \R^k \times \R^{k \times d} \rightarrow \R^k$ is truly monotone in $y$, locally Lipschitz in $y$, with at most polynomial growth of degree $m \in \N^*$, and Lipschitz in $z$. 
Specifically, we assume the following.

	\begin{itemize}
		\item[] \MonY : $f$ is monotone in $y$ with monotonicity constant $M_y \le 0$. For all $y,y',z$,
				\begin{align*}
					\scalar{y'-y}{f(y',z)-f(y,z)} \le M_y \abs{y'-y}^2 .
				\end{align*}
		\item[] \RegY : $f$ has the following $y$-regularity, with constant $L_y \ge 0$. For all $y,y',z$,
				\begin{align*}
					\Abs{ f(y',z)-f(y,z) } \le L_y \big( 1 + \abs{y'}^{m-1} + \abs{y}^{m-1} \big) \abs{y'-y} .
				\end{align*}
		\item[] \LipZ : 	$f$ is Lipschitz in $z$ with constant $L_z \ge 0$. For all $y,z,z'$,	
				\begin{align*}
					\Abs{ f(y,z')-f(y,z) } \le L_z \abs{z'-z} .
				\end{align*}
		\item[] \fzero : we have $f(0,0)=0$.
	\end{itemize}

As explained in section \ref{section---extensions.to.overall-monotone.drivers}, the assumption \fzero is mostly a convenient centering.

In the scalar case, where $k=1$, one sees that what is standarly called ``monotonicity condition'', with a general $M_y \in \R$, in fact means that the slopes of all the chords of $f$ are bounded above: for all $y',y \in \R$ and $z \in \R^{1 \times d}$ we have
	\begin{align*}
		\frac{f(y',z)-f(y,z)}{y'-y} \le M_y .
	\end{align*}
The function $f$ can in fact be decomposed as $f = f^d + f^l$ where $f^d$ is a decreasing function and $f^l$ is a Lipschitz function, see \cite{ChassagneuxJacquierMihaylov2014}. 
In particular, it could be locally strictly increasing and locally strictly decreasing, but not be monotone over $\R$ in the usual sense. 
When $M_y \le 0$ as in \MonY, the condition implies that $f$ is decreasing, and in particular monotone. We will say in this paper that the function truly monotone. 
When $M_y < 0$, $f$ is strictly decreasing. We will say in this paper that the function strictly monotone. 

These assumptions are assumed to hold in all of the paper, except where otherwise stated.

\subsubsection*{Properties of the continuous-time solution and stability assumption}


Under the assumptions above, the forward and backward SDEs \eqref{equation---reference--SDE}-\eqref{equation---reference--BSDE} are well-posed. 
More precisely, from \cite{Pardoux1999}, \cite{BriandCarmona2000}, and sections 2 and 3 of \cite{LionnetDosReisSzpruch2015}, we have the following results.
Firstly, \eqref{equation---reference--SDE}-\eqref{equation---reference--BSDE} has a unique solution $(X,Y,Z)\in \S^p\times \S^p \times \H^p$ for any $p\geq 2$, and it satisfies 
		\begin{align}		\label{equation---reference--bounds.on.X.Y.Z} 
			\Norm{X}_{\S^p}^{p} \le C_p 
			\qquad \text{and} \qquad 
			\Norm{Y}_{\S^p}^{p} + \Norm{Z}_{\H^p}^{p} \le C_p  \Norm{\xi}_{L^p}^{p}.
		 \end{align}
We recall that $\S^p$ is the space of continuous semimartingales $X$ such that $\Norm{X}_{\S^p}^{p} = E\big[ \sup_{0\le t \le T} \abs{X_t}^p \big] <+\infty$, and $\H^p$ is the space of progressively-measurable processes $Z$ such that $\Norm{Z}_{\H^p}^p = E\big[ \big( \int_0^T \abs{Z_t}^2 dt \big)^{p/2} \big] <+\infty$.

Secondly, let $\pi = (t_i)_{i = 0 \ldots N}$ be any time-grid (or partition, or subdivision) of the interval $[0,T]$, that is to say such that $0 = t_0 < t_1 < \ldots < t_N = T$, 
and denote its modulus by $\abs{\pi} = \max\{ \hip | i = 0 , \ldots, N-1 \}$, where $\hip = t_\ip - t_i$.
Define the random variables $(\wb{Z}_\ti)_{i=0,\cdots,N-1}$ by
			\begin{align*}
				\overline{Z}_\ti =  E_i\bigg[ \frac{1}{\hip} \int_\ti^\tip Z_u du \bigg].
			\end{align*}
For any $p\geq 2$ there exists a positive constant $C_p$ (independent of $\pi$) such that we have 
	\begin{align}		\label{equation---reference--path.regularity.Y.Z}
		\begin{aligned}
			&\mathrm{REG}_{Y,p}(\abs{\pi}) := \sup \Big\{ E\big[ \Abs{Y_s - Y_t}^p \big]^{{1}/{p}}	\ \Big| \  s,t\in[0,T] \text{ and } \abs{t-s}\le \abs{\pi} \Big\} \leq C_p \abs{\pi}^{1/2} , \\
			&\mathrm{REG}_{Z,2}({\pi}) := E\bigg[ \sum_{i=0}^{N-1} \int_\ti^\tip \abs{Z_t - \overline{Z}_\ti}^2 dt \bigg]^{1/2} \le C_2 \abs{\pi}^{1/2}.
		\end{aligned}
	\end{align}

\paragraph*{}
Additionally, since $\xi=g(X_T)$ is bounded, the solution to the continuous-time BSDE satisfies the followings bounds.
%
In dimension $k=1$, using the linearization technique (see for instance \cite{BriandElie2013}), we have for all $t \in [0,T]$
	\begin{align*}
		\Norm{Y_t}_\infty \le e^{M_y (T-t)} \Norm{\xi}_\infty .	
	\end{align*}
Clearly, as soon as $M_y \le 0$, as is assumed here in \MonY, we have, $\Norm{Y}_{\S^\infty} \le \Norm{\xi}_\infty$.	
Recall that $\S^\infty$ is the space of bounded continuous semimartingales, with norm $\Norm{Y}_{\S^\infty} = \sup_{0 \le t \le T} \Norm{Y_t}_\infty$.

In the general case of a dimension $k \in \N^*$, we have (see \cite{LionnetDosReisSzpruch2015})
	\begin{align*}
		\abs{Y_t}^2 + \half E_t\bigg[ \int_t^T \abs{Z_u}^2 du \bigg] \le e^{2 (M_y + L_z^2) (T-t)} \Norm{\xi}_\infty ,
	\end{align*}
so in particular 
	\begin{align*}
		\Norm{Y_t}_\infty \le e^{(M_y + L_z^2) (T-t)} \Norm{\xi}_\infty .
	\end{align*}
Therefore, a sufficient condition for having $\Norm{Y}_{\S^\infty} \le \Norm{\xi}_\infty$ is that $L_z^2 \le -M_y$. This condition, which will be called \kdSc from now on ($k$-dimensional stability, continuous case), is assumed to hold in the rest of the paper. Note that in the limiting case $M_y=0$, \kdSc implies that we require $L_z=0$ : no dependence in $z$.

A focus of this work and a main goal of the ATS scheme is to preserve these bounds in the numerical scheme.
Unless otherwise stated (see section \ref{section---extensions.to.overall-monotone.drivers}), we assume $M_y < 0$, as this is where the scheme is useful.

\subsubsection*{Time-discretization, for a given grid}

Let $\pi = (t_i)_{i = 0 \ldots N}$ be any  time-grid.
We use this time-grid to discretize the dynamics of the forward and backward SDEs \eqref{equation---reference--SDE}-\eqref{equation---reference--BSDE} as follows.
As we are not concerned with obtaining high-order schemes here, we simply use the explicit Euler--Maruyama scheme for $X$. 
It is initialized with $X_0 = x_0$ and then, for $i = 0$ up to $N-1$, writing $\Delta W_\tip = W_\tip - W_\ti$,
	\begin{align}		\label{equation---reference--X.scheme-time.discretization.only}
		X_\ip = X_i + \mu(\ti,X_i) \hip + \sigma(\ti,X_i) \Delta W_\tip .
	\end{align}
For $(Y,Z)$, we use a form of the explicit Bouchard--Touzi--Zhang (BTZ) scheme over the grid $\pi$. 
Denote $E_i=E[\cdot|\F_\ti]$.
It is initialized with $Y_N = \xi^N := g(X_N)$ and then, for $i=N-1$ down to $0$, $Y_\ip$ is used to compute $(Y_i,Z_i)$ as
	\begin{align}		\label{equation---reference--ATS.scheme}
		\left\{\begin{aligned}
			Y_i &= E_i\Big[ Y_\ip + f(Y_\ip,Z_i) \hip \Big]											\\
			Z_i &= E_i\Big[ Y_\ip \Hip^* \Big] .
		\end{aligned}\right.
	\end{align}

Here, $\Hip = T^{R^\pi_i}(\Delta W_\tip) / \hip$ and $T^{R^\pi_i}$ is the projection in the $L^\infty$-norm of $\R^d$, i.e. component-wise, on the ball of radius $R^\pi_i > 0$.
We take $R^\pi_i$ to be a function of the time-step $\hip$ : $R^\pi_i = R(\hip)$, where $R : h \mapsto \sqrt{h} r(h)$ and $r(h) = \sqrt{2} \ln(1/h)$.
We impose that $\abs{\pi} \le \hmax = 0.95$, so that $r(h) > 0$ for any $h = \hip \le \abs{\pi} \le \hmax < 1$.
Let us also define $\Lambda(h) = E[T^{r(h)}(G)^2]$, for any scalar random variable $G \sim \cN(0,1)$, where $T^r$ is the projection in $\R^1$ on the ball of radius $r$.
As $0 < h \le \hmax$, we have $\Lambda_{\text{min}} = \Lambda(\hmax) \le \Lambda(h) < \Lambda(0^+) = 1$. 
Denoting by $\Lambda_i = \Lambda(\hip)$ and by $I_d$ the identity of  $\R^{d \times d}$, we thus have
	\begin{align*}
		E\big[ \Hip \Hip^* \big] = \frac{\Lambda_i I_d}{\hip}
		\qquad \text{and} \qquad 
		E\big[ \abs{\Hip}^2 \big] = \frac{\Lambda_i d}{\hip} .
	\end{align*}
It results from the choice of $R$ that there exists $C^H \ge 0$ such that for all $\pi$ : $\abs{\pi} \le \hmax$, 
	\begin{align}		\label{equation---reference--bound.on.error.from.truncating.H}
		\max_{i = 0 \ldots N-1} E\left[ \Abs{ \frac{\Delta W_\tip}{\hip} - \Hip }^2 \right]	 \le C^H .
	\end{align}
{See the appendix for a proof of this.} 

	\begin{remark}
		The above scheme without the truncation of the Brownian increment $\Delta W_\tip$ would be the standard explicit BTZ scheme. 
		As will be seen later, requiring $\Hip$ to be bounded is crucial to ensure some numerical stability for the scheme (the comparison property, to be precise).
		
		Also, it is argued in \cite{LionnetDosReisSzpruch2016} that given that $Y_i$ is defined as $E_i\big[ Y_\ip + f(Y_\ip,Z_i) \hip \big]$ it would be in a sense more natural to set
		$Z_i = E_i\big[ \big( Y_\ip + (1-\theta') f(Y_\ip,Z_i) \hip \big) \Hip^* \big]$ with $\theta'=0$. 
		However the precise value of  $\theta'$ does not change the order of the scheme nor does it change much the estimations that will follow. 
		But $\theta'<1$ leads to an implicit equation in $Z_i$. 
		So we take $\theta'=1$ for simplicity.
	\end{remark}

	\begin{remark}[Notation]
		Sometimes, we emphasize that these approximations are induced by the grid $\pi$ by denoting them $(X^\pi_i)_{i = 0 \ldots N}$ and $(Y^\pi_i,Z^\pi_i)_{i = 0 \ldots N}$. 
		In particular, if the grid $\pi^n$ is characterized by a numerical parameter $n$ as will be the case below, we may write $X^n_i$, $Y^n_i$ and $Z^n_i$.
		Note that the discrete indexing by $i \in \{0,\ldots,N\}$ prevents the confusion of $X_i$, $Y_i$ and $Z_i$ with the quantities $X_\ti$, $Y_\ti$ and $\wb{Z}_\ti$.
	\end{remark}

	\begin{remark}
		Given that we have assumed $g$ to be Lipschitz and that the Euler--Maruyama scheme is known to have strong order $1/2$, 
		there exists a constant $C^X$ such that
			\begin{align}		\label{equation---reference--error.bound.on.the.terminal.condition}
				E[\abs{\xi-\xi^N}^2] \le C^X \abs{\pi} .
			\end{align}
	\end{remark}

\subsubsection*{Adapted time steps}

We now describe the adapted time steps used for the scheme. 

\paragraph*{One-dimensional case.}
Let $h_0(L_z) = \sup \{ h \in \ ]0,\hmax] \ \big| \ R(h) \le \frac{1}{3 L_z} \}$. Note that since $R(h) = \sqrt{h} \sqrt{2} \ln(1/h)$ has limit $0$ as $h \goesto 0$ and $h \goesto 1$, $R(\cdot)$ is bounded on $]0,1]$, hence on $]0,\hmax]$. So if $L_z$ is small enough, we have  $h_0(L_z) = \hmax$. If $L_z=0$, we define $h_0(L_z) = \hmax$.

For a given number $n \in \N^*$, we define the time grid $\pi^n = \{ t_i^n \ |\ i = 0 \ldots N(n) \}$ by setting $t^n_N = T$ and,
for decreasing $i$'s, setting $t^n_i = t^n_\ip - h^n_\ip$ where 
	\begin{align}		\label{equation---reference--definition.of.hnip.1dim}
		h^n_\ip := \min \left\{ \frac{1}{3L_y \big( 1 + e^{(m-1) M_y (T-t^n_\ip)} \Norm{\xi}_\infty^{m-1} \big)} \ , \ h_0(L_z) \ , \ \hmax \ , \ \tnip \ , \ \frac{T}{n} \right\} .
	\end{align}
The first two requirements are there to ensure that the scheme is numerically stable in the sense of preserving some size-bounds on the output. 
Note for later that $h \le h_0(L_z)$ implies $\hnip L_z \Norm{\Hip}_\infty \le 1/3$. The third requirement is only to ensure that $R(\hnip) > 0$ and the fourth to ensure that $\tni \ge 0$ at each step. The last one guarantees that $\abs{\pi^n} \le T/n \goesto 0$ as $n \goesto +\infty$.

\paragraph*{Multi-dimensional case.}
As the condition for numerical stabilty is different when $k>1$, we construct the time-grid $\pi^n$ as above but with time-steps of sizes
	\begin{align}		\label{equation---reference--definition.of.hnip.multidim}
		h^n_\ip := \min \left\{ \frac{1}{4} \ \frac{-M_y}{(d+1) L_y^2 ( 1 + e^{(m-1) \frac{M_y}{2} (T-\tnip)} \Norm{\xi}^{2(m-1)}_\infty ) } \ , \ \frac{1}{8} \ \frac{1}{2(d+1) L_z^2 } \ ,\ \hmax \ , \ \tnip \ ,\ \frac{T}{n} \right\} .
	\end{align}
Additionally, the continuous-time stability condition \kdSc has to be strengthened, and we will require for the numerical approximation that $L_z^2 \le \frac14 (-M_y)$, condition denoted by \kDS from now on ($k$-dimensional stability, numerical version).

As $i$ decreases from $N-1$ to $1$, the time-step $\hnip$ increases (in a large sense), and then $h^n_1 = t^n_1 > 0$ can be arbitrarily close to $0$.
In the limit $n \goesto +\infty$, the partition ``degenerates'' into the uniform partition. 

Remark that, unlike what would be possible with an ODE, the adapted time-steps $h^n_\ip$ cannot be computed online here, i.e. as the algorithm computing $(Y_i,Z_i)$ progresses backward in time, and cannot make use of $\Norm{Y^n_\ip}_{\infty}$ for computing $\hnip$. 
Indeed, the computation of $Y_N=\xi^N$ and of the conditional expectations involved in defining the $Y_i$'s and $Z_i$'s requires having already approximated the forward process $X$ at the points of the partition $\pi^n$.
So $\pi^n$ has to be computed ahead of the computation of the $X_i$'s and $(Y_i,Z_i)$'s.. 
As will be seen later, it implicitly makes use of a priori bounds for $\Norm{Y^n_\ip}_{\infty}$.

\section{Numerical stability}			\label{section---numerical.stability}

In this section, we study the numerical stability of the scheme. More precisely, we first look at the conditions for an explicit scheme to preserve in discrete time the bound $\Norm{Y_\ti}_\infty \le \Norm{\xi}_\infty$ from the continuous-time dynamics.
We in fact look at the sharper bound $\Norm{Y_\ti}_\infty \le e^{M_y (T-\ti)} \Norm{\xi}_\infty$. 
We are also interested, in the scalar case $k=1$, in the preservation of the comparison property. 

\subsection{The one-dimensional case}

\subsubsection{Size estimate for Y}

Here we consider an arbitrary partition $\pi$ and focus on a single time-step $[\ti,\tip]$, of size $\hip = \tip-\ti$.

	\begin{proposition}		\label{proposition---size.estimate.one.step.via.linearization.1d}
		Let $k=1$, and let $Y_\ip$ be a bounded input for the scheme \eqref{equation---reference--ATS.scheme}. 
		Define 
			\begin{align*}
				h^0_\ip(\norm{Y_\ip}_\infty) := \min \left\{ \frac{1}{3L_y \big( 1 + \Norm{Y_\ip}_\infty^{m-1} \big)} \ , \ h_0(L_z) \ , \ \hmax \right\} .
			\end{align*}
		If $\hip \le h^0_\ip(\norm{Y_\ip}_\infty)$, then the output $(Y_i,Z_i)$ of the scheme is such that
			\begin{align}		\label{equation---size.estimate.one.step.infty.norm}
				\Norm{Y_i}_\infty \le e^{M_y \hip} \Norm{Y_\ip}_\infty \le \Norm{Y_\ip}_\infty .
			\end{align}
	\end{proposition}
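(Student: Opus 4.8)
The plan is to reproduce, over a single time-step, the linearization argument behind the continuous-time bound $\Norm{Y_t}_\infty \le e^{M_y(T-t)}\Norm{\xi}_\infty$: I will rewrite $Y_i$ as $E_i[\,Y_\ip\,\Gamma_\ip\,]$ for a nonnegative random weight $\Gamma_\ip$ that plays the role of the product of the usual integrating factor and Girsanov density, and whose $\F_\ti$-conditional mean is at most $e^{M_y\hip}$. First I would linearize $f$ in its $y$-variable between $Y_\ip$ and $0$: set $\beta_\ip := \big(f(Y_\ip,Z_i)-f(0,Z_i)\big)/Y_\ip$ on $\{Y_\ip\ne 0\}$ and $\beta_\ip := M_y$ on $\{Y_\ip = 0\}$, so that $Y_\ip\beta_\ip = f(Y_\ip,Z_i)-f(0,Z_i)$ pointwise. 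In the scalar case, \MonY says exactly that every chord slope of $f(\cdot,z)$ is $\le M_y$, so $\beta_\ip \le M_y$ a.s.; meanwhile \RegY gives $\abs{\beta_\ip} \le L_y\big(1+\abs{Y_\ip}^{m-1}\big)$ on $\{Y_\ip\ne 0\}$. Letting $y'\to 0$ in these two inequalities also shows $\abs{M_y}\le L_y$, so together with the first cap defining $h^0_\ip(\Norm{Y_\ip}_\infty)$ we get $\hip\abs{\beta_\ip} \le \hip L_y\big(1+\Norm{Y_\ip}_\infty^{m-1}\big) \le \frac{1}{3}$ everywhere.

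Next I would linearize in $z$ and substitute the formula for $Z_i$. By \fzero and \LipZ, $\abs{f(0,Z_i)} = \abs{f(0,Z_i)-f(0,0)} \le L_z\abs{Z_i}$, so $f(0,Z_i) = \scalar{\gamma_i}{Z_i}$ for some $\F_\ti$-measurable $\gamma_i$ with $\abs{\gamma_i}\le L_z$; since $Z_i = E_i[Y_\ip\Hip^*]$ and $\gamma_i$ is $\F_\ti$-measurable, $\hip f(0,Z_i) = \hip\,E_i\big[Y_\ip\scalar{\gamma_i}{\Hip^*}\big]$. Plugging this together with the linearization above into \eqref{equation---reference--ATS.scheme} gives
\[
  Y_i \;=\; E_i\big[\,Y_\ip\,\Gamma_\ip\,\big], \qquad \Gamma_\ip \;:=\; 1 + \beta_\ip\hip + \hip\scalar{\gamma_i}{\Hip^*} .
\]
Two properties of $\Gamma_\ip$ then close the argument. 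First, $\Hip$ is an odd function of a symmetric Gaussian, hence $E_i[\Hip]=0$ and $E_i[\Gamma_\ip] = 1 + \hip E_i[\beta_\ip] \le 1 + M_y\hip \le e^{M_y\hip}$. Second, $\Gamma_\ip\ge 0$: the drift part satisfies $\abs{\beta_\ip\hip}\le\frac13$ by the previous paragraph, and by Cauchy--Schwarz the martingale part satisfies $\abs{\hip\scalar{\gamma_i}{\Hip^*}} \le \hip L_z\Norm{\Hip}_\infty \le \frac13$ because $\hip\le h_0(L_z)$ (this is exactly the bound $\hip L_z\Norm{\Hip}_\infty\le\frac13$ recorded after the definition of the grid), so $\Gamma_\ip\ge\frac13>0$.

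To conclude, using $\Gamma_\ip\ge 0$ and $\abs{Y_\ip}\le\Norm{Y_\ip}_\infty$ a.s.,
\[
  \abs{Y_i} \;\le\; E_i\big[\abs{Y_\ip}\Gamma_\ip\big] \;\le\; \Norm{Y_\ip}_\infty\,E_i[\Gamma_\ip] \;\le\; e^{M_y\hip}\Norm{Y_\ip}_\infty ,
\]
and taking the essential supremum over $\Omega$ gives \eqref{equation---size.estimate.one.step.infty.norm}, the last inequality $e^{M_y\hip}\le 1$ being immediate from $M_y\le 0$.

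I expect the main obstacle to be establishing $\Gamma_\ip\ge 0$, which is also the structural reason the time-step must be capped by $h^0_\ip(\Norm{Y_\ip}_\infty)$: the $Z$-term cannot merely be estimated and added on the outside (that would cost a spurious additive $\frac13\Norm{Y_\ip}_\infty$ and destroy the bound), it must be absorbed into the conditional expectation where it has zero mean, and for the resulting weight to behave like a change-of-measure density it must be nonnegative — which is precisely what the caps $1/\big(3L_y(1+\Norm{Y_\ip}_\infty^{m-1})\big)$ and $h_0(L_z)$, together with the truncation of $\Delta W_\tip$ into the bounded $\Hip$, are designed to ensure. The only mildly delicate bookkeeping is the choice of $\beta_\ip$ on $\{Y_\ip=0\}$, handled by the elementary remark that $\abs{M_y}\le L_y$.
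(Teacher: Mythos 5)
Your proof is correct and follows essentially the same linearization argument as the paper: same $\beta_\ip$, $\gamma_i$, same representation $Y_i=E_i[Y_\ip\,\E_\ip]$ with $\E_\ip\ge\frac13$, and the same use of $E_i[\Hip]=0$ and $\beta_\ip\le M_y$ to conclude. The only (welcome) addition is your explicit handling of the event $\{Y_\ip=0\}$ via the observation $\abs{M_y}\le L_y$, which the paper leaves implicit.
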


	\begin{proof}
		We use a linearization technique. 
		Recall that $f(0,0)=0$ and define
			\begin{align*}
				\beta_\ip &= \frac{f(Y_\ip,Z_i)-f(0,Z_i)}{Y_\ip} 
				\qquad \text{and} \\
				\gamma_i &= 1_{\{Z_i \neq 0\}} \frac{f(0,Z_i)-f(0,0)}{\abs{Z_i}^2} Z_i^* . 
			\end{align*}
		Note that since $k=1$, $Z_i \in \R^{1 \times d}$. Then $Z_i^*$, or $\gamma_i$, meant to be an approximate derivative of $f$ with respect to $Z$, lies in 
		$\R^{d \times 1} \approx (\R^{1 \times d})^*$, the dual of $\R^{1 \times d}$, and acts on $Z \in \R^{1 \times d}$ by $\gamma_i \cdot Z_i = \sum_{l=1}^d \gamma_i^l Z_i^{1,l}$, 
		i.e. $\gamma_i \cdot Z_i = Z_i \gamma_i$.
		Writing
			\begin{align*}
				f(Y_\ip,Z_i) = 0 + \beta_\ip Y_\ip + \gamma_i \cdot Z_i ,
			\end{align*}
		recalling that $Z_i = E_i[Y_\ip \Hip^*]$ and $\gamma_i$ is $\F_i$-measurable, 
		we have
			\begin{align*}
				Y_i &= E_i\Big[ Y_\ip + \beta_\ip Y_\ip \hip + \gamma_i \cdot Z_i \hip \Big] 															\\
					&= E_i\Big[ Y_\ip \big(1 + \beta_\ip \hip \big) + \gamma_i \cdot E_i[Y_\ip \Hip^*] \hip \Big] 							\\
					&= E_i\Big[ Y_\ip \big(1 + \beta_\ip \hip \big) + \gamma_i \cdot Y_\ip \Hip^* \hip \Big]									\\
					&= E_i\Big[ Y_\ip \big(1 + \beta_\ip \hip + \gamma_i \cdot \Hip^* \hip \big) \Big]											\\
					&= E_i\Big[  Y_\ip\ \E_\ip \Big].
			\end{align*}
		From \RegY and \LipZ we have
			\begin{align*}
				\abs{\beta_\ip} \le L_y (1 + \abs{Y_\ip}^{m-1}) \le L_y (1 + \Norm{Y_\ip}_\infty^{m-1})		
				\qquad \text{and} \qquad
				\abs{\gamma_i} \le L_z .
			\end{align*}
		So 
			\begin{align*}
				\E_\ip \ge 1 - \hip L_y (1 + \Norm{Y_\ip}_\infty^{m-1}) - \hip L_z \Norm{\Hip}_\infty \ge \frac13 > 0 ,
			\end{align*}
		since $\hip$ is assumed such that 
			\begin{align*}
				\hip \le \frac{1}{3L_y \big( 1 + \Norm{Y_\ip}_\infty^{m-1} \big)}
				\quad \text{and} \quad 
				\hip \le h_0(L_z) \Rightarrow \hip \Norm{\Hip}_\infty \le R(\hip) \le \frac{1}{3L_z} .
			\end{align*}
		From the positivity of $\E_\ip$ it results that
			\begin{align*}
				\abs{Y_i} 
					&\le E_i\Big[ \E_\ip \ \abs{Y_\ip} \Big]																					\\
					&\le E_i\Big[ 1 + \beta_\ip \hip + \gamma_i \cdot \Hip^* \hip \Big] \Norm{Y_\ip}_\infty 			\\
					& =  E_i\Big[ 1 + \beta_\ip \hip \Big] \Norm{Y_\ip}_\infty 													\\
					&\le e^{M_y \hip} \Norm{Y_\ip}_\infty 
						\le \Norm{Y_\ip}_\infty ,
			\end{align*}
		since $\gamma_i$ is $\F_i$-measurable, $E_i[\Hip]=0$, $1+x \le e^x$, and $\beta_\ip \le M_y$ by \MonY. 
		The last inequality follows from the fact that $M_y \le 0$.
	\end{proof}

We now consider, for a fixed $n \in \N^*$, the ATS grid $\pi^n$ and the associated approximations $(Y^n_i,Z^n_i)_{i = 0 \ldots N}$.

	\begin{corollary}		\label{proposition---size.estimate.all.steps.via.linearization.1d}
		Let $k=1$. We have, for all $i \in \{0, \ldots, N \}$,
			\begin{align*}
				\Norm{Y^n_i}_\infty \le e^{M_y (T-\tni)} \Norm{\xi^N}_\infty .
			\end{align*}
	\end{corollary}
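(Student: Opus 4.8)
The plan is to prove the bound by downward induction on $i$, from $i=N$ to $i=0$, with Proposition~\ref{proposition---size.estimate.one.step.via.linearization.1d} providing the inductive step. The base case $i=N$ is immediate: since $Y^n_N=\xi^N$ and $T-\tni=0$, we have $\Norm{Y^n_N}_\infty=\Norm{\xi^N}_\infty=e^{M_y(T-\tni)}\Norm{\xi^N}_\infty$.

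For the inductive step I would fix $i\in\{0,\dots,N-1\}$, assume the bound at index $\ip$, i.e.\ $\Norm{Y^n_\ip}_\infty\le e^{M_y(T-\tnip)}\Norm{\xi^N}_\infty$, and apply Proposition~\ref{proposition---size.estimate.one.step.via.linearization.1d} to the single step $[\tni,\tnip]$ with the bounded input $Y^n_\ip$. The real content is to check that the ATS time-step $\hnip$ satisfies the hypothesis $\hnip\le h^0_\ip(\Norm{Y^n_\ip}_\infty)$ of that proposition. Two of the three terms defining $h^0_\ip(\cdot)$, namely $h_0(L_z)$ and $\hmax$, dominate $\hnip$ directly from the definition \eqref{equation---reference--definition.of.hnip.1dim}. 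For the third term I would use that $\hnip\le\big(3L_y(1+e^{(m-1)M_y(T-\tnip)}\Norm{\xi}_\infty^{m-1})\big)^{-1}$, so that it suffices to know $\Norm{Y^n_\ip}_\infty^{m-1}\le e^{(m-1)M_y(T-\tnip)}\Norm{\xi}_\infty^{m-1}$; this is precisely the inductive hypothesis raised to the power $m-1\ge0$, together with the small bookkeeping point that the running bound $e^{M_y(T-\tnip)}\Norm{\xi^N}_\infty$ is no larger than the bound $e^{M_y(T-\tnip)}\Norm{\xi}_\infty$ wired into the grid --- both terminal conditions being dominated by $\Norm{g}_\infty$, and $e^{M_y(\cdot)}\le1$ since $M_y\le0$.

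Once the hypothesis of Proposition~\ref{proposition---size.estimate.one.step.via.linearization.1d} is in place, it yields $\Norm{Y^n_i}_\infty\le e^{M_y\hnip}\Norm{Y^n_\ip}_\infty$, and combining this with the inductive hypothesis and $\hnip=\tnip-\tni$ the exponents simply add:
\begin{align*}
	\Norm{Y^n_i}_\infty \le e^{M_y\hnip}\, e^{M_y(T-\tnip)}\Norm{\xi^N}_\infty = e^{M_y(\hnip+T-\tnip)}\Norm{\xi^N}_\infty = e^{M_y(T-\tni)}\Norm{\xi^N}_\infty ,
\end{align*}
which closes the induction.

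I expect the main --- and essentially only --- obstacle to be the verification, inside the inductive step, that the ATS time-step $\hnip$ is small enough to meet the hypothesis of Proposition~\ref{proposition---size.estimate.one.step.via.linearization.1d} at \emph{every} step: that is, that the a priori size bound built into the definition of $\hnip$ is consistent with, and actually implied by, the size bound carried along by the induction. Everything else reduces to a one-line telescoping of the per-step contraction factor $e^{M_y\hnip}$, and no probabilistic work remains here, the conditional expectations having already been absorbed into Proposition~\ref{proposition---size.estimate.one.step.via.linearization.1d}.
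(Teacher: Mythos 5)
Your proposal is correct and follows essentially the same route as the paper: backward induction with the base case at $i=N$, verification that the ATS step size $\hnip$ satisfies $\hnip\le h^0_\ip(\Norm{Y^n_\ip}_\infty)$ via the inductive hypothesis raised to the power $m-1$, and then one application of Proposition~\ref{proposition---size.estimate.one.step.via.linearization.1d} followed by telescoping the factors $e^{M_y\hnip}$. The bookkeeping point you raise about $\Norm{\xi^N}_\infty$ versus the $\Norm{\xi}_\infty$ hard-wired into the grid is present (and glossed over in the same way) in the paper's own proof, so it does not distinguish the two arguments.
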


	\begin{proof} 
		The result follows from a backward induction. The estimate is clearly true for $i=N$. 
		Next, assume it is true for $i+1$, for $i \in \{0, \ldots N-1\}$. 
		Taking the power $m-1$ of it we have $\Norm{Y^n_\ip}_\infty^{m-1} \le e^{(m-1) M_y (T-\tnip)} \Norm{\xi^N}_\infty^{m-1}$, and therefore
			\begin{align*}
				h^n_\ip 
					&= \min \left\{ \frac{1}{3L_y \big( 1 + e^{(m-1) M_y (T-\tnip)} \Norm{\xi}_\infty^{m-1} \big)} \ , \ h_0(L_z) \ , \ \hmax \ , \ \tnip \ , \ \frac{T}{n} \right\} 		\\
						&\le \min \left\{ \frac{1}{3L_y \big( 1 + \Norm{Y_\ip}_\infty^{m-1} \big)} \ , \ h_0(L_z) \ , \ \hmax  \right\} 
							= h^0_\ip(\Norm{Y_\ip}_\infty) .
			\end{align*}
		Proposition \ref{proposition---size.estimate.one.step.via.linearization.1d} then guarantees that 
			\begin{align*}
				\Norm{Y^n_i}_\infty \le e^{M_y \hnip} \Norm{Y^n_\ip}_\infty \le e^{M_y \hnip} \ e^{M_y (T-\tnip)} \Norm{\xi^N}_\infty =  e^{M_y (T-\tni)} \Norm{\xi^N}_\infty ,
			\end{align*}
		as wanted.
	\end{proof}

In \cite{ChassagneuxRichou2015}, Chassagneux and Richou studied the notion of numerical stability associated with the following property of the continuous-time BSDE:
when $M_y \le 0$ and $\xi \in L^\infty$, the solution satisfies $\Norm{Y_t}_\infty \le \Norm{\xi}_\infty$ for all $t$.
They determined the $h_0$'s under which, for Lipschitz drivers, the individual steps of the implicit and explicit BTZ schemes are numerically stable and, for monotone drivers, the implicit BTZ scheme is numerically stable. 
Our result above determines the $h_0$ under which, for monotone drivers of polynomial growth, the steps of the explicit BTZ scheme are numerically stable in the same sense. 
In contrast with the implicit case however (see propositions 2.1 and 2.2 in \cite{ChassagneuxRichou2015}), the condition for an individual time-step of the explicit scheme to be stable has to depend on $\Norm{Y_\ip}_\infty$ and the polynomial growth $m$ of the driver, rather that only the constants related to its regularity ($M_y,L_z$, etc).

Our result also allows to conclude about the numerical stability in a stronger sense, associated the bound $\Norm{Y_t}_\infty \le e^{M_y (T-t)} \Norm{\xi}_\infty < \Norm{\xi}_\infty$, 
when $M_y < 0$. 
We note that the discretized dynamics here is damped at the same rate $M_y$ as the continuous dynamics. 
In the multidimensional case and the error estimates below, it will not be possible to obtain this: the rate will still be negative but only in an interval $[M_y,M_y/2] \subseteq \ ]-\infty,0[ \,$.

\subsubsection{A distance estimate for $Y$ and comparison theorem}

We now consider inputs $\wh{Y}_\ip$ and $Y_\ip$, and the respective outputs $(\wh{Y}_i,\wh{Z}_i)$ and $(Y_i,Z_i)$, from one step of the scheme \TimeDiscScheme. We denote generically by $\delta x$ the quantity $\wh{x}-x$.

	\begin{proposition}		\label{proposition---stability.estimate.one-step.via.linearization.1d} 
		Let k=1 and assume that $\hip \le h^0_\ip(\norm{\wh Y_\ip}_\infty,\norm{Y_\ip}_\infty)$, where  
			\begin{align*}
				h^0_\ip(\norm{\wh Y_\ip}_\infty,\norm{Y_\ip}_\infty) 
					:= \min \left\{ \frac{1}{3 L_y (1 + \norm{\wh{Y}_\ip}_\infty^{m-1} + \norm{Y_\ip}_\infty^{m-1})} \ , \ h_0(L_z) \ , \ \hmax \right\} .
			\end{align*}		
			
		We have the representation
			\begin{align*}
				\delta Y_i = E_i\Big[ \E_\ip \, \delta Y_\ip \Big] 
			\end{align*}
		for some random variable $\E_\ip$ which is strictly positive and bounded.
		Furthermore, we have
			\begin{align*}
				\norm{\delta Y_i}_\infty \le e^{ M_y \hip } \norm{\delta Y_\ip}_\infty  
				\qquad \text{and} \qquad 
				\abs{\delta Y_i}^2 \le e^{ L_z^2 \hip } E_i\Big[ \abs{\delta Y_\ip}^2 \Big] .
			\end{align*}						
	\end{proposition}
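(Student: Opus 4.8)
The plan is to rerun the linearization of the proof of Proposition~\ref{proposition---size.estimate.one.step.via.linearization.1d}, but now for the difference of two runs of the scheme, and then to read off the three assertions. First I would subtract the two instances of \TimeDiscScheme, which gives at once $\delta Z_i = E_i[\delta Y_\ip\,\Hip^*]$ and $\delta Y_i = E_i[\delta Y_\ip + (f(\wh Y_\ip,\wh Z_i)-f(Y_\ip,Z_i))\hip]$. Then I would linearize the driver increment exactly as before, writing $f(\wh Y_\ip,\wh Z_i)-f(Y_\ip,Z_i) = \big(f(\wh Y_\ip,\wh Z_i)-f(Y_\ip,\wh Z_i)\big) + \big(f(Y_\ip,\wh Z_i)-f(Y_\ip,Z_i)\big) = \beta_\ip\,\delta Y_\ip + \gamma_\ip\cdot\delta Z_i$, with $\beta_\ip = 1_{\{\delta Y_\ip\neq 0\}}\big(f(\wh Y_\ip,\wh Z_i)-f(Y_\ip,\wh Z_i)\big)/\delta Y_\ip$ and $\gamma_\ip = 1_{\{\delta Z_i\neq 0\}}\big(f(Y_\ip,\wh Z_i)-f(Y_\ip,Z_i)\big)\delta Z_i^*/\abs{\delta Z_i}^2$; by \MonY, \RegY and \LipZ these satisfy pointwise $\beta_\ip\le M_y\le 0$, $\abs{\beta_\ip}\le L_y\big(1+\Norm{\wh Y_\ip}_\infty^{m-1}+\Norm{Y_\ip}_\infty^{m-1}\big)$ and $\abs{\gamma_\ip}\le L_z$.

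Next I would assemble the representation. Since $\delta Z_i$ is $\F_i$-measurable, $E_i[\gamma_\ip\cdot\delta Z_i] = \wb{\gamma}_i\cdot\delta Z_i$ with $\wb{\gamma}_i := E_i[\gamma_\ip]$, which is $\F_i$-measurable and still obeys $\abs{\wb{\gamma}_i}\le L_z$; substituting $\delta Z_i = E_i[\delta Y_\ip\Hip^*]$ and pulling $\wb{\gamma}_i$ back inside the conditional expectation gives $\delta Y_i = E_i[\delta Y_\ip\,\E_\ip]$ with $\E_\ip := 1 + \beta_\ip\hip + \hip\,\wb{\gamma}_i\cdot\Hip^*$. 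As in Proposition~\ref{proposition---size.estimate.one.step.via.linearization.1d}, the hypothesis $\hip\le h^0_\ip(\Norm{\wh Y_\ip}_\infty,\Norm{Y_\ip}_\infty)$ forces $\hip\abs{\beta_\ip}\le 1/3$ and $\hip\abs{\wb{\gamma}_i}\,\Norm{\Hip}_\infty\le L_z R(\hip)\le 1/3$, so $1/3\le\E_\ip\le 5/3$: strictly positive and bounded, as required. The $L^\infty$ bound then follows immediately: $\E_\ip\ge 0$ gives $\abs{\delta Y_i}\le E_i[\E_\ip\,\abs{\delta Y_\ip}]\le E_i[\E_\ip]\,\Norm{\delta Y_\ip}_\infty$, and since $E_i[\Hip]=0$ and $\wb{\gamma}_i$ is $\F_i$-measurable, $E_i[\E_\ip] = 1 + \hip E_i[\beta_\ip]\le 1+\hip M_y\le e^{M_y\hip}$.

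The $L^2$ estimate is the delicate step and I expect it to be the main obstacle. The difficulty is that the time-step restriction only provides $\hip L_y\big(1+\Norm{\wh Y_\ip}_\infty^{m-1}+\Norm{Y_\ip}_\infty^{m-1}\big)\le 1/3$ -- a fixed constant, not an $o(1)$ quantity -- so one cannot simply square the defining relation and bound the quadratic-variation term $\hip^2\abs{f(\wh Y_\ip,\wh Z_i)-f(Y_\ip,Z_i)}^2$ crudely, since its $y$-part would then contribute a fixed fraction of $E_i[\abs{\delta Y_\ip}^2]$ rather than something of order $\hip$. So the $\beta_\ip$-terms have to be grouped so that monotonicity ($\beta_\ip\le M_y\le 0$) makes them negative and they can simply be discarded, while the $z$-contribution is handled through the structure of the truncated increment: $E_i[\Hip]=0$, the conditional independence -- hence orthogonality in the conditional $L^2$ -- of the components of $\Hip$, which gives a Bessel-type inequality $\hip\,\abs{\delta Z_i}^2\le\Lambda_i\big(E_i[\abs{\delta Y_\ip}^2]-\abs{E_i[\delta Y_\ip]}^2\big)$ with no dimensional constant, and above all $E[\Hip\Hip^*]=\Lambda_i I_d/\hip$ with $\Lambda_i<1$ \emph{strictly}: it is this strict inequality that leaves just enough room, after a Young's inequality, to absorb the remaining cross term $\hip\,\wb{\gamma}_i\cdot E_i[\abs{\delta Y_\ip}^2\Hip^*]$ (equivalently $\hip^2\abs{\wb{\gamma}_i\cdot\delta Z_i}^2$) into the factor $e^{L_z^2\hip}=1+L_z^2\hip+\cdots$. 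The genuine work is the bookkeeping of these cross terms, in particular checking that no term of order $\sqrt\hip$ survives; everything else is a transcription of the previous proposition.
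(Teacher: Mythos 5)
Your treatment of the representation and of the $L^\infty$ bound is correct and coincides with the paper's proof: same splitting of the driver increment, same $\beta_\ip$ and $\gamma_\ip$, same $\E_\ip = 1 + \beta_\ip\hip + E_i[\gamma_\ip]\cdot\Hip^*\hip$ with $1/3\le\E_\ip\le5/3$ under the stated restriction on $\hip$, and the same use of $E_i[\Hip]=0$ together with $\beta_\ip\le M_y$ for the sup-norm estimate.

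The gap is in the $L^2$ estimate, which you leave as a sketch and, more importantly, for which you point at the wrong mechanism. There is no cross term to absorb by a Young inequality, no term of order $\sqrt{\hip}$ to track, and the strictness of $\Lambda_i<1$ plays no role ($\Lambda_i\le1$ suffices). The argument is a short continuation of what you already have: since $\beta_\ip\le M_y\le0$ and $\E_\ip>0$, you get the pointwise bound $0<\E_\ip\le 1+\wh\gamma_i\cdot\Hip^*\hip$ with $\wh\gamma_i:=E_i[\gamma_\ip]$, hence $\abs{\delta Y_i}\le E_i\big[(1+\wh\gamma_i\cdot\Hip^*\hip)\,\abs{\delta Y_\ip}\big]$. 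Conditional Cauchy--Schwarz then gives $\abs{\delta Y_i}^2\le E_i\big[\abs{1+\wh\gamma_i\cdot\Hip^*\hip}^2\big]\,E_i\big[\abs{\delta Y_\ip}^2\big]$; expanding the first factor, the linear term vanishes exactly because $E_i[\Hip]=0$, and the quadratic term equals $\abs{\wh\gamma_i}^2\Lambda_i\hip\le L_z^2\hip$ by $E[\Hip\Hip^*]=\Lambda_i I_d/\hip$ and $\Lambda_i\le1$; conclude with $1+L_z^2\hip\le e^{L_z^2\hip}$. Your worry about squaring the defining relation directly (which would indeed produce an $O(1)$ contribution from the $y$-part) is legitimate, but that is precisely why the proof stays with the multiplicative representation through $\E_\ip$ rather than squaring the scheme equation; the Bessel-type inequality for $\delta Z_i$ you invoke is not needed here at all.
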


	\begin{proof}
		We have
			\begin{align*}
				\delta Y_i &= E_i\Big[ \delta Y_\ip + \big( f(\wh{Y}_\ip,\wh{Z}_i) - f(Y_\ip,Z_i) \big) \hip \Big]	,	\\
				\delta Z_i &= E_i\big[ \delta Y_\ip \Hip^* \big] .
			\end{align*}
		Defining
			\begin{align*}
				\beta_\ip = \frac{ f(\wh{Y}_\ip,\wh{Z}_i) - f(Y_\ip,\wh{Z}_i) }{ \delta Y_\ip }
				\quad \text{and} \quad 
				\gamma_\ip = 1_{\{\delta Z_i \neq 0\}} \ \frac{ f(Y_\ip,\wh{Z}_i) - f(Y_\ip,Z_i) }{ \abs{\delta Z_i}^2 } \delta Z_i^*
			\end{align*}
		we can write
			\begin{align*}
				f(\wh{Y}_\ip,\wh{Z}_i) - f(Y_\ip,Z_i) 
					&= f(\wh{Y}_\ip,\wh{Z}_i) - f(Y_\ip,\wh{Z}_i) +  f(Y_\ip,\wh{Z}_i) - f(Y_\ip,Z_i)		\\
					&= \beta_\ip \delta Y_\ip + \gamma_\ip \cdot \delta Z_i .
			\end{align*}
		Since $\delta Z_i$ is $\F_i$-measurable, we can write
			\begin{align*}
				\delta Y_i
					&= E_i\Big[ \delta Y_\ip + \beta_\ip \delta Y_\ip \hip + \gamma_\ip \cdot \delta Z_i \hip \Big]													\\
					&= E_i\Big[ \delta Y_\ip \big (1 + \beta_\ip \hip \big) + E_i[ \gamma_\ip ] \cdot \delta Z_i \hip \Big]											\\
					&= E_i\Big[ \delta Y_\ip \big (1 + \beta_\ip \hip \big) + E_i[ \gamma_\ip] \cdot E_i\big[ \delta Y_\ip \Hip^* \big] \hip \Big]		\\
					&= E_i\Big[ \delta Y_\ip \big (1 + \beta_\ip \hip \big) +  E_i\big[ \delta Y_\ip E_i[\gamma_\ip] \cdot \Hip^* \hip \big] \Big]		\\
					&= E_i\Big[ \delta Y_\ip \big (1 + \beta_\ip \hip + E_i[\gamma_\ip] \cdot \Hip^* \hip \big) \Big] 												\\
					&= E_i\Big[ \delta Y_\ip \, \E_\ip \Big] ,
			\end{align*}			
		with $\E_\ip = 1 + \beta_\ip \hip + E_i[\gamma_\ip] \cdot \Hip^* \hip$.
		Now, using \RegY and \LipZ we have
			\begin{align*}
				\Abs{\beta_\ip} \le L_y (1 + \norm{\wh{Y}_\ip}_\infty^{m-1} + \norm{Y_\ip}_\infty^{m-1}) 
				\qquad \text{and} \qquad 
				\Abs{E_i[\gamma_\ip]} \le E_i[\Abs{\gamma_\ip}] \le L_z .
			\end{align*}
		Given the assumption on $\hip$, we have
			\begin{align*}
				L_y (1 + \norm{\wh{Y}_\ip}_\infty^{m-1} + \norm{Y_\ip}_\infty^{m-1}) \hip \le \frac13
				\quad \text{and} \quad 
				\hip \le h_0(L_z) \Rightarrow \Norm{\Hip}_\infty \hip \le \frac{1}{3L_z}
			\end{align*}
		so we have 
			\begin{align*}
				\frac53 \ge \E_\ip = 1 + \beta_\ip \hip + E_i[\gamma_\ip] \cdot \Hip^* \hip \ge \frac13 ,
			\end{align*}
		which proves the first claim of the proposition.
		As a consequence of the positivity of $\E_\ip$, and using the fact that $E_i\big[ E_i[\gamma_\ip] \cdot \Hip^* \big] \hip = E_i[\gamma_\ip] \cdot E_i[\Hip^*] \hip  = 0$,
			\begin{align*}
				\Abs{\delta Y_i}
					&\le E_i\Big[ \E_\ip \Abs{\delta Y_\ip} \Big] 																												\\
					&\le E_i\Big[ 1 + \beta_\ip \hip + E_i[\gamma_\ip] \cdot \Hip^* \hip \Big] \norm{\delta Y_\ip}_\infty 									\\
					&\le E_i\Big[ 1 + M_y \hip \Big] \norm{\delta Y_\ip}_\infty 																							\\
					&\le e^{M_y \hip} \norm{\delta Y_\ip}_\infty ,
			\end{align*}
		since $\beta_\ip \le M_y$ by \MonY and $1+x \le e^x$. This proves the second claim. 
		Finally, because $M_y \le 0$, we have $\E_\ip \le 1 + 0 + E_i[\gamma_\ip] \hip \Hip^*$.
		Denoting $\wh \gamma_i = E_i[\gamma_\ip]$, we use the Cauchy--Schwartz inequality to have
			\begin{align*}
				\Abs{\delta Y_i}^2
					&\le E_i\Big[ \Abs{1 + \wh \gamma_i \cdot\Hip^* \hip}^2 \Big] E_i\big[ \Abs{\delta Y_\ip}^2 \big] 								\\	
					& =  E_i\Big[ 1 + 2 \wh \gamma_i \cdot\Hip^* \hip + \abs{\wh \gamma_i \cdot \Hip^*}^2 \hip^2 \Big] E_i\big[ \Abs{\delta Y_\ip}^2 \big] 								\\	
					& =  \Big( 1 + 0 + \abs{\wh \gamma_i}^2 \frac{\Lambda_i}{\hip} \hip^2 \Big) E_i\big[ \Abs{\delta Y_\ip}^2 \big] 								\\	
					&\le \Big( 1 + L_z^2 \hip \Big) E_i\big[ \Abs{\delta Y_\ip}^2 \big] 								\\	
					&\le e^{ L_z^2 \hip } E_i\big[ \Abs{\delta Y_\ip}^2 \big] .
			\end{align*}
		Here, we have used again $E_i[\Hip]=0$, $1+x \le e^x$ and we have used $\Lambda_i \le 1$.		
	\end{proof}

	\begin{corollary}[Comparison property]
		Let $k=1$.
		Consider bounded random variables $\wh Y_\ip$ and $Y_\ip$ such that $\wh Y_\ip \ge Y_\ip$, 
		and consider a second driver $\wh f$ such that $\wh f \ge f$ over $\R \times \R^{1 \times d}$.
		Let $(Y_i, Z_i)$ and $(\wh Y_i, \wh Z_i)$ be the output of one step of the scheme \TimeDiscScheme with respective input-drivers $(Y_\ip,f)$ and $(\wh Y_\ip, \wh f)$.
		If $\hip \le h^0_\ip(\norm{\wh Y_\ip}_\infty,\norm{Y_\ip}_\infty)$ as defined in proposition \ref{proposition---stability.estimate.one-step.via.linearization.1d}, 
		then we have $\wh Y_i \ge Y_i$ ---order is preserved.
	\end{corollary}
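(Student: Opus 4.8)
The natural approach is to reduce the comparison statement to the linearization representation established in Proposition \ref{proposition---stability.estimate.one-step.via.linearization.1d}, but with a twist to accommodate the two different drivers $f$ and $\wh f$. The plan is to write $\delta Y_i = \wh Y_i - Y_i$ as a conditional expectation of $\delta Y_\ip = \wh Y_\ip - Y_\ip \ge 0$ against a strictly positive weight, plus a nonnegative correction term coming from $\wh f \ge f$. Concretely, I would start from
\[
\delta Y_i = E_i\Big[ \delta Y_\ip + \big( \wh f(\wh Y_\ip,\wh Z_i) - f(Y_\ip, Z_i) \big) \hip \Big],
\]
and insert the intermediate term $f(\wh Y_\ip,\wh Z_i)$: write $\wh f(\wh Y_\ip,\wh Z_i) - f(Y_\ip,Z_i) = \big( \wh f(\wh Y_\ip,\wh Z_i) - f(\wh Y_\ip,\wh Z_i) \big) + \big( f(\wh Y_\ip,\wh Z_i) - f(Y_\ip,Z_i) \big)$. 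The first bracket is $\ge 0$ by the assumption $\wh f \ge f$; the second is exactly the difference handled in the proof of Proposition \ref{proposition---stability.estimate.one-step.via.linearization.1d} and decomposes as $\beta_\ip \delta Y_\ip + \gamma_\ip \cdot \delta Z_i$.

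From there I would follow the computation in that proposition verbatim: using $\delta Z_i = E_i[\delta Y_\ip \Hip^*]$ and the $\F_i$-measurability manipulations, one obtains
\[
\delta Y_i = E_i\Big[ \delta Y_\ip \, \E_\ip \Big] + E_i\Big[ \big( \wh f(\wh Y_\ip,\wh Z_i) - f(\wh Y_\ip,\wh Z_i) \big) \hip \Big],
\]
with $\E_\ip = 1 + \beta_\ip \hip + E_i[\gamma_\ip] \cdot \Hip^* \hip$. The hypothesis $\hip \le h^0_\ip(\norm{\wh Y_\ip}_\infty,\norm{Y_\ip}_\infty)$ is precisely what makes $\E_\ip \ge \tfrac13 > 0$, by the same estimate on $\abs{\beta_\ip}$ (via \RegY, noting $\abs{\wh Y_\ip} \le \norm{\wh Y_\ip}_\infty$ and $\abs{Y_\ip}\le \norm{Y_\ip}_\infty$) and on $\abs{E_i[\gamma_\ip]} \le L_z$ together with $\hip \le h_0(L_z)$. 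Since $\delta Y_\ip \ge 0$ by assumption and $\E_\ip > 0$, the first term is $\ge 0$; since $\wh f \ge f$ pointwise the second term is $\ge 0$; hence $\delta Y_i \ge 0$, i.e. $\wh Y_i \ge Y_i$.

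The only mild subtlety — and the place I would be most careful — is the bookkeeping in the linearization when the two driver-values being differenced are $f(\wh Y_\ip,\wh Z_i)$ and $f(Y_\ip,Z_i)$ but the ``extra'' $\wh f$-versus-$f$ term is evaluated at $(\wh Y_\ip,\wh Z_i)$: one must split off the $\wh f - f$ piece \emph{before} doing the $y$- and $z$-linearization of $f$, so that the $\beta_\ip,\gamma_\ip$ appearing are exactly those of Proposition \ref{proposition---stability.estimate.one-step.via.linearization.1d} and the bounds carry over unchanged. No new estimate is needed beyond what that proposition already supplies; the argument is essentially a sign-tracking exercise on top of the existing representation.
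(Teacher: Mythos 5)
Your proposal is correct and follows essentially the same route as the paper: the paper likewise splits off $\delta f_\ip := (\wh f - f)(\wh Y_\ip,\wh Z_i) \ge 0$ before linearizing $f(\wh Y_\ip,\wh Z_i) - f(Y_\ip,Z_i)$ as $\beta_\ip \delta Y_\ip + \gamma_\ip \cdot \delta Z_i$, arriving at the representation $\delta Y_i = E_i\big[ \delta Y_\ip \E_\ip + \delta f_\ip \hip \big]$ and concluding from $\E_\ip > 0$, $\delta Y_\ip \ge 0$ and $\delta f_\ip \ge 0$. The bookkeeping point you flag (splitting off the $\wh f - f$ term before the $y$- and $z$-linearization so that $\beta_\ip,\gamma_\ip$ and their bounds are unchanged) is exactly how the paper handles it.
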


This property is known as comparison theorem for the continuous-time BSDE, and its preservation by the discrete-time dynamics is another notion of numerical stability. 
See \cite{ChassagneuxRichou2016,CheriditoStadje2012,CheriditoStadje2013} for the comparison property for implicit schemes. 

	\begin{proof}
		When $\wh f = f$, the result follows immediately from the representation of $\delta Y_\ip$ given by proposition \ref{proposition---stability.estimate.one-step.via.linearization.1d}.		
		In the general case, one only needs to adapt the proof of proposition \ref{proposition---stability.estimate.one-step.via.linearization.1d}. 
		With the previous notation and $\delta f_\ip := \big(\wh f -f \big)(\wh{Y}_\ip,\wh{Z}_i)$, write 
			\begin{align*}
				\wh f(\wh{Y}_\ip,\wh{Z}_i) - f(Y_\ip,Z_i) 
					&= \wh f(\wh{Y}_\ip,\wh{Z}_i) - f(\wh{Y}_\ip,\wh{Z}_i) + f(\wh{Y}_\ip,\wh{Z}_i) - f(Y_\ip,Z_i)		\\
					&= \delta f_\ip + \beta_\ip \delta Y_\ip + \gamma_\ip \cdot \delta Z_i .
			\end{align*}
		This leads to the representation $\delta Y_i = E_i\Big[ \delta Y_\ip \E_\ip + \delta f_\ip \hip \Big]$.
		Seeing as $\E_\ip > 0$, and we have $\delta Y_\ip \ge 0$ and $\delta f_\ip \ge 0$ by assumption, we have $\delta Y_i \ge 0$ indeed.		
	\end{proof}

	\begin{remark}
		The partition $\pi^n$ defined in section \ref{section---preliminaries} does not guarantee that the scheme possesses the comparison property. 
		For this, given terminal conditions $\xi^N$ and $\wh \xi^N$ that are bounded by some common $C$, define $\pi^{n,+}$ like $\pi^n$ but with
			\begin{align}		\label{equation---reference--definition.of.hnip.1dim.with.comparison}
				h^{n,+}_\ip := \min \left\{ \frac{1}{3L_y \big( 1 + 2 \ e^{(m-1) M_y (T-t^n_\ip)} C^{m-1} \big)} \ , \ h_0(L_z) \ , \ \hmax \ , \ \tnip \ , \ \frac{T}{n} \right\} .
			\end{align}
		The factor $2$ here makes each time-step smaller and thus ensures that the size bounds given by proposition \ref{proposition---size.estimate.all.steps.via.linearization.1d} hold.
		Consequently, the condition for comparison indeed holds at each time-step.
	\end{remark}

\subsection{The multi-dimensional case}

In this subsection we treat the general case of a dimension $k \in \N^*$ and we obtain size estimates for $(Y_i,Z_i)$, not just $Y_i$.
First, we analyse one time-step of size $\hip$.

	\begin{proposition}		\label{proposition---size.estimate.one.step.via.squaring.multidim}
		Let $Y_\ip$ be a bounded input for the scheme \TimeDiscScheme.
		Define $\wh h_0	 := \frac{1}{8} \ \frac{1}{2(d+1) L_z^2 }$ 
		and 
			\begin{align*}
				\wh{M}_y(\hip) = M_y + 2 L_z^2 + 2 (d+1) L_y^2 \big( 1 + \Norm{Y_\ip}^{2(m-1)}_\infty \big) \hip .
			\end{align*}
		If $\hip \le \min\{ \wh h_0 , \hmax \}$, then the output $(Y_i,Z_i)$ of the scheme satisfies
			\begin{align}		\label{equation---size.estimate.one.step.almost.sure}
				\abs{Y_i}^2 + \frac{1}{8} \abs{Z_i}^2 \hip \le e^{2 \wh{M}_y(\hip)	 \hip} E_i\big[ \abs{Y_\ip}^2 \big] .
			\end{align}
		Furthermore, if \kDS holds and 
			\begin{align*}
				\hip \le h^0_\ip(\Norm{Y_\ip}_\infty) 
					:= \min \left\{ \frac{1}{4} \ \frac{-M_y}{(d+1) L_y^2 (1 + \Norm{Y_\ip}^{2(m-1)}_\infty) } \ ,\ \wh h_0 \ , \ \hmax \right\} ,
			\end{align*}
		then $\wh{M}_y(\hip) \le M_y/4 =: \wh{M}_y$, which is strictly negative. 
	\end{proposition}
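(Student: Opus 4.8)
The plan is to establish \eqref{equation---size.estimate.one.step.almost.sure} by the ``squaring'' ($L^2$) technique applied to a single step of \TimeDiscScheme. Write $h:=\hip$ and $f_i:=f(Y_\ip,Z_i)$ (which is $\F_\ip$-measurable), and use the orthogonal decomposition $Y_\ip=E_i[Y_\ip]+\Delta N_\ip$, $\Delta N_\ip:=Y_\ip-E_i[Y_\ip]$, so that $E_i[\Delta N_\ip]=0$ and $\abs{E_i[Y_\ip]}^2=E_i[\abs{Y_\ip}^2]-E_i[\abs{\Delta N_\ip}^2]$. Since $Y_i=E_i[Y_\ip]+h\,E_i[f_i]$, expanding $\abs{Y_i}^2$ and writing the cross term as $\scalar{E_i[Y_\ip]}{E_i[f_i]}=E_i[\scalar{Y_\ip}{f_i}]-E_i[\scalar{\Delta N_\ip}{f_i}]$ gives the exact identity
\begin{align*}
\abs{Y_i}^2=E_i[\abs{Y_\ip}^2]+2h\,E_i[\scalar{Y_\ip}{f_i}]-E_i[\abs{\Delta N_\ip}^2]-2h\,E_i[\scalar{\Delta N_\ip}{f_i}]+h^2\abs{E_i[f_i]}^2.
\end{align*}
It is essential to retain the favourable term $-E_i[\abs{\Delta N_\ip}^2]$; conditional Jensen alone, which discards it, does not close the estimate.

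Next I would bound the ``signal'' terms. For the cross term, split $f_i=\big(f(Y_\ip,Z_i)-f(0,Z_i)\big)+\big(f(0,Z_i)-f(0,0)\big)$ using \fzero: \MonY gives $\scalar{Y_\ip}{f(Y_\ip,Z_i)-f(0,Z_i)}\le M_y\abs{Y_\ip}^2$, and \LipZ with a weighted Young inequality gives $\scalar{Y_\ip}{f(0,Z_i)}\le\abs{Y_\ip}L_z\abs{Z_i}\le 2L_z^2\abs{Y_\ip}^2+\tfrac18\abs{Z_i}^2$; this is the origin of the $M_y+2L_z^2$ part of $\wh M_y(h)$. For $\abs{f_i}^2$ (which enters through $h^2\abs{E_i[f_i]}^2$ and through absorbing $-2h\,E_i[\scalar{\Delta N_\ip}{f_i}]$ by a second Young inequality), \RegY and \LipZ give $\abs{f_i}\le L_y(1+\abs{Y_\ip}^{m-1})\abs{Y_\ip}+L_z\abs{Z_i}$, hence $E_i[\abs{f_i}^2]\le 4L_y^2\big(1+\Norm{Y_\ip}^{2(m-1)}_\infty\big)E_i[\abs{Y_\ip}^2]+2L_z^2\abs{Z_i}^2$; after the $h^2$-bookkeeping this produces the $2(d+1)L_y^2\big(1+\Norm{Y_\ip}^{2(m-1)}_\infty\big)\,h$ part of $\wh M_y(h)$.

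The structural ingredient is the a priori control of $\abs{Z_i}^2$. Since $E_i[\Hip]=0$ we have $Z_i=E_i[\Delta N_\ip\,\Hip^*]$, and since $E[\Hip\Hip^*]=\tfrac{\Lambda_i}{h}I_d$ with $\Lambda_i\le 1$, a projection argument (Cauchy--Schwarz together with orthogonality of the components of $\Hip$, i.e.\ Bessel's inequality in the $L^2$-span of $\Hip^1,\dots,\Hip^d$) yields $\abs{Z_i}^2\le\tfrac{\Lambda_i}{h}E_i[\abs{\Delta N_\ip}^2]\le\tfrac1h E_i[\abs{\Delta N_\ip}^2]$, i.e.\ $E_i[\abs{\Delta N_\ip}^2]\ge h\abs{Z_i}^2$. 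Collecting everything, with the Young parameters tuned so that the coefficient of $E_i[\abs{Y_\ip}^2]$ equals $1+2\wh M_y(h)h$, the residual $\abs{Z_i}^2$ terms are the positive contributions above plus the $\tfrac18 h\abs{Z_i}^2$ kept on the left; all of them are to be absorbed into $-E_i[\abs{\Delta N_\ip}^2]\le -h\abs{Z_i}^2$ (part of which is simultaneously spent covering the sign-indefinite $-2h\,E_i[\scalar{\Delta N_\ip}{f_i}]$). This is feasible precisely when $h\le\wh h_0$ (and $h\le\hmax$), and the dimension $d$ enters these thresholds through the Young parameters. Dropping the then-nonpositive $\abs{Z_i}^2$ coefficient and applying $1+x\le e^x$ to the remaining coefficient gives \eqref{equation---size.estimate.one.step.almost.sure}.

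I expect this last balancing step to be the main obstacle: there is a single favourable term, $-E_i[\abs{\Delta N_\ip}^2]$, and it must at once (i) dominate the retained $\tfrac18 h\abs{Z_i}^2$, (ii) dominate the positive $\abs{Z_i}^2$ produced by the two Young inequalities and by the $L_z^2\abs{Z_i}^2$ inside $\abs{f_i}^2$, and (iii) absorb the a priori sign-indefinite $-2h\,E_i[\scalar{\Delta N_\ip}{f_i}]$; arranging all three at once, with margin, is what pins down the constants $\tfrac18$, $\wh h_0$ and the factor $d+1$. Finally, the ``furthermore'' is a routine substitution of \kDS ($L_z^2\le\tfrac14(-M_y)$) and of $\hip\le h^0_\ip(\Norm{Y_\ip}_\infty)$ into the definition of $\wh M_y$: each of the correction terms $2L_z^2$ and $2(d+1)L_y^2\big(1+\Norm{Y_\ip}^{2(m-1)}_\infty\big)\hip$ is then dominated by an appropriate fraction of $-M_y$, yielding $\wh M_y(\hip)\le M_y/4<0$.
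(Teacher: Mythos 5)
Your overall architecture is the right one and close to the paper's (square one step, keep the conditional-variance term, use $E_i[\Hip\Hip^*]=\Lambda_i I_d/\hip$ to convert it into an $\abs{Z_i}^2\hip$ term, Young inequalities, then $1+x\le e^x$), and your exact identity for $\abs{Y_i}^2$ is correct. But the step you yourself flag as ``the main obstacle'' is precisely where the proposal fails to deliver the statement: absorbing the sign-indefinite cross term $-2\hip E_i[\scalar{\Delta N_\ip}{f_i}]$ by a Young inequality $\le \epsilon E_i[\abs{\Delta N_\ip}^2]+\frac{\hip^2}{\epsilon}E_i[\abs{f_i}^2]$ cannot produce the stated constants. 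Indeed, the coefficient multiplying $\hip^2E_i[\abs{f_i}^2]$ becomes $1+\frac1\epsilon$ (one copy from $\hip^2\abs{E_i[f_i]}^2$, $\frac1\epsilon$ copies from the Young), and matching the stated $2(d+1)L_y^2(1+\Norm{Y_\ip}^{2(m-1)}_\infty)\hip$ in $\wh M_y(\hip)$ forces $1+\frac1\epsilon\le d+1$, i.e.\ $\epsilon\ge\frac1d$. This leaves only $(1-\frac1d)E_i[\abs{\Delta N_\ip}^2]\ge(1-\frac1d)\hip\abs{Z_i}^2$ of favourable mass, which must cover the $\frac18\hip\abs{Z_i}^2$ kept on the left, the $\frac14\hip\abs{Z_i}^2$ from the Young on $\scalar{Y_\ip}{f(0,Z_i)}$, and the $2(d+1)L_z^2\hip^2\abs{Z_i}^2\le\frac18\hip\abs{Z_i}^2$ coming from $E_i[\abs{f_i}^2]$ --- a total of $\half\hip\abs{Z_i}^2$. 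This requires $1-\frac1d\ge\half$, which fails for $d=1$ (the favourable term is entirely consumed) and holds with no margin for $d=2$. So the balancing you assert to be ``feasible precisely when $h\le\wh h_0$'' is not, in general.

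The missing idea is how the paper neutralizes that cross term without spending any of the favourable mass: it orthogonally decomposes the \emph{full} martingale increment $\Delta M_\ip=(Y_\ip+f_i\hip)-E_i[Y_\ip+f_i\hip]$ against $\Hip$, writing $\Delta M_\ip=\zeta_i\Lambda_i^{-1}\Hip\hip+\Delta N_\ip$ with $\zeta_i=E_i[(Y_\ip+f_i\hip)\Hip^*]=Z_i-D_i$ and $D_i=-E_i[f_i\hip\,\Hip^*]$. Then $E_i[\abs{\Delta M_\ip}^2]\ge\half\abs{Z_i}^2\Lambda_i^{-1}\hip-\abs{D_i}^2\Lambda_i^{-1}\hip$, and by Cauchy--Schwarz $\abs{D_i}^2\Lambda_i^{-1}\hip\le d\,\hip^2E_i[\abs{f_i}^2]$: the interaction between the martingale part of $Y_\ip$ and $f_i$ costs only $d$ extra copies of $\hip^2E_i[\abs{f_i}^2]$ (whence the factor $d+1$) and no fraction of $E_i[\abs{\Delta N_\ip}^2]$. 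Within your own framework this amounts to completing the square in $Z_i$ on the terms $\Lambda_i^{-1}\hip\abs{Z_i}^2-2\hip E_i[\scalar{\Delta N_\ip}{f_i}]$ rather than applying Young to the cross term; with that replacement your argument closes for all $d\ge1$ and yields exactly the stated $\wh h_0$, $\wh M_y(\hip)$ and the $\frac18$ on the left. The ``furthermore'' part is indeed the routine substitution you describe.
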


	\begin{proof}
		Similarly to \cite{LionnetDosReisSzpruch2016}, we rewrite equation \eqref{equation---reference--ATS.scheme} as
			\begin{align*}
				Y_i + \Delta M_\ip = Y_\ip + f(Y_\ip,Z_i) \hip ,
			\end{align*}
		where $\Delta M_\ip = Y_\ip + f(Y_\ip,Z_i) \hip - E_i[Y_\ip + f(Y_\ip,Z_i) \hip]$.
		We square both sides of the equation above, then take the conditional expectation, 
		using $E_i\big[2\scalar{Y_i}{\Delta M_\ip}\big] = 2\scalar{Y_i}{E_i[\Delta M_\ip]} = 0$ seeing as $E[\Delta M_\ip]=0$.
		This gives
			\begin{align*}
				\abs{Y_i} + 0 + E_i[\abs{\Delta M_\ip}^2] = E_i\Big[ \abs{Y_\ip}^2 + 2 \scalar{Y_\ip}{f(Y_\ip,Z_i) \hip} + \abs{f(Y_\ip,Z_i)}^2 \hip^2 \Big] .
			\end{align*}
		We assume that $L_z >0$, as the proof when $L_z$ is simpler (see remark \ref{remark---Lz.null.in.size.estimate.one.step.via.squaring.multidim}).  
		Using \MonY, \LipZ, $f(0,0)=0$ and a Young inequality with some $\alpha_z > 0$ yet to specify, we have 
			\begin{align*}
				2 \scalar{Y_\ip}{f(Y_\ip,Z_i)}
					& =  2 \scalar{Y_\ip - 0}{f(Y_\ip,Z_i)-f(0,Z_i)} + 2 \scalar{Y_\ip}{f(0,Z_i) - f(0,0)} 		\\
					&\le 2 M_y \abs{Y_\ip}^2 + 2 \abs{Y_\ip} L_z \abs{Z_i}									\\
					&\le \big( 2 M_y + \alpha_z \big) \abs{Y_\ip}^2 + \frac{L_z^2}{\alpha_z} \abs{Z_i}^2 .
			\end{align*}
		Thus, 
			\begin{align*}
				\abs{Y_i} + E_i[ &\abs{\Delta M_\ip}^2] 																																					\\
						&\le  E_i\bigg[ \Big\{ 1 + \big(2 M_y + \alpha_z\big) \hip \Big\} \abs{Y_\ip}^2 \bigg]
										+ \frac{L_z^2}{\alpha_z} \abs{Z_i}^2 \hip + E_i \big[ \abs{f(Y_\ip,Z_i)}^2 \big] \hip^2 .
			\end{align*}
		
		We now work on the term $\Delta M_\ip$, which can be uniquely written as $\Delta M_\ip = \zeta_i \Lambda_i^{-1} \Hip \hip + \Delta N_\ip$, 
		for a $\F_i$-measurable $\R^{k \times d}$-valued $\zeta_i$ and  a martingale increment $\Delta N_\ip$ orthogonal to $\Hip$ ($E[\Delta N_\ip]=0$ et $E[\Delta N_\ip \, H_\ip^* ] = 0$).
		The random variable $\zeta_i$ is given by $\zeta_i = E_i[(Y_\ip + f(Y_\ip,Z_i) \hip) \Hip^*]$ and we denote by
		$D_i = Z_i - \zeta_i$.
		Using the orthogonality of $\Hip$ and $\Delta N_\ip$, as well as $\zeta_i = Z_i - D_i$ and a Young inequality, we have
			\begin{align*}
				E_i\big[\abs{\Delta M_\ip}^2\big] 
					& = E_i\big[\abs{\zeta_i \Lambda_i^{-1} \Hip \hip + \Delta N_\ip}^2\big] 
							= E_i\big[\abs{\zeta_i \Lambda_i^{-1} \Hip \hip}^2\big] + E_i\big[\abs{\Delta N_\ip}^2\big]												\\
					& = \abs{\zeta_i}^2 \Lambda_i^{-1} \hip + E_i\big[\abs{\Delta N_\ip}^2\big]																				\\
					&\ge \half \abs{Z_i}^2 \Lambda_i^{-1} \hip - \abs{D_i}^2 \Lambda_i^{-1} \hip + E_i\big[\abs{\Delta N_\ip}^2\big] .
			\end{align*}
		Consequently, 
			\begin{align*}
				\abs{Y_i} + \bigg( &\frac{\Lambda^{-1}}{2} - \frac{L_z^2}{\alpha_z} \bigg) \abs{Z_i}^2 \hip  + E_i[\abs{\Delta N_\ip}^2] 		\\
					&\le  E_i\bigg[ \Big\{ 1 + \big(2 M_y + \alpha_z\big) \hip \Big\} \abs{Y_\ip}^2 \bigg] + \abs{D_i}^2 \Lambda_i^{-1} \hip
						 + E_i \big[ \abs{f(Y_\ip,Z_i)}^2 \big] \hip^2 .
			\end{align*}

		We now estimate $\abs{D_i} = \abs{Z_i - \zeta_i}$. 
		Using the definitions of $Z_i$ and $\zeta_i$, we have
			\begin{align*}
				D_i = E_i\Big[ -f(Y_\ip,Z_i) \hip \Hip^* \Big] .
			\end{align*}		
		So, using the Cauchy--Schwartz inequality, 
			\begin{align*}
				\abs{D_i}^2 \Lambda_i^{-1} \hip 
					&\le \Lambda_i^{-1} \hip E_i\Big[ \abs{ \Hip^* \hip}^2 \Big] E_i\Big[ \abs{ f(Y_\ip,Z_i) }^2  \Big] 			\\
					&\le d \hip^2 E_i\Big[ \abs{ f(Y_\ip,Z_i) }^2  \Big] .
			\end{align*}
		Meanwhile, using \RegY and \LipZ, 
			\begin{align*}
				E_i \big[ \abs{f(Y_\ip,Z_i)}^2 \big] \hip^2 \le 4 L_y^2 E_i \big[ ( 1 + \abs{Y_\ip}^{2(m-1)} ) \abs{Y_\ip}^{2} \big] \hip^2 + 2 L_z^2 \abs{Z_i}^2 \hip^2 .
			\end{align*}
		Therefore, using also $\Lambda^{-1} \ge 1$, we have the estimate
			\begin{align*}
				\abs{Y_i} + \bigg( \half &- \frac{L_z^2}{\alpha_z} \bigg) \abs{Z_i}^2 \hip  + E_i[\abs{\Delta N_\ip}^2] 		\\
					&\le  E_i\bigg[ \Big\{ 1 + \big(2 M_y + \alpha_z\big) \hip \Big\} \abs{Y_\ip}^2 \bigg] + 2(d+1) L_z^2 \abs{Z_i}^2 \hip^2 					\\
						&\hspace{2cm} + 4 (d+1) L_y^2 E_i \big[ ( 1 + \abs{Y_\ip}^{2(m-1)} ) \abs{Y_\ip}^{2} \big] \hip^2 .
			\end{align*}
					
		Now, we choose $\alpha_z = 4 L_z^2$, 
		so that $\half - \frac{L_z^2}{\alpha_z} = \frac{1}{4}$.
		We have taken $\hip \le \wh h_0$ 
		so it is small enough that $2(d+1) L_z^2 \hip \le \frac{1}{8}$.
		With this, we have	
			\begin{align*}
				\abs{Y_i} + \frac{1}{8} \abs{Z_i}^2 \hip
					&\le E_i\bigg[ \Big\{ 1 + 2 \wh{M}_y(\hip) \hip \Big\} \abs{Y_\ip}^2 \bigg] 					
					\le E_i\bigg[ e^{2 \wh{M}_y(\hip) \hip} \abs{Y_\ip}^2 \bigg] ,
			\end{align*}
		since $1+x \le e^x$ for $x \in \R$ and we have set
			\begin{align*}
				\wh{M}_y(\hip) = M_y + 2 L_z^2 + 2 (d+1) L_y^2 \big( 1 + \Norm{Y_\ip}^{2(m-1)}_\infty \big) \hip .
			\end{align*}			
		
		To conclude, note that using \kDS (i.e. $\frac{L_z^2 }{-M_y} \le \frac{1}{4}$) 
		and the value of $h^0_\ip(\Norm{Y_\ip}_\infty)$, 
		the second and third term of $\wh M_y\big(h^0_\ip(\Norm{Y_\ip}_\infty)\big)$ are bounded above by $\frac12 (-M_y)$ and $\frac14 (-M_y)$ respectively. 
		So $\wh M_y(\hip) \le \frac14 M_y < 0$ indeed.
	\end{proof}

	\begin{remark}		\label{remark---Lz.null.in.size.estimate.one.step.via.squaring.multidim}
		In the case $L_z = 0$, that is to say $f$ does not depend on $z$, the above proof has to be modified into a simpler one : take $\alpha_z = 0$ and use the convention $\frac{L_z^2}{\alpha_z}=0$.
		Also, take $\wh h_0 = +\infty$. Finally, $2 L_y^2$ becomes $L_y^2$.
	\end{remark}

	\begin{remark}
		If instead of \TimeDiscScheme we considered the same scheme but with $Z_i$ defined as $Z_i := E_i\Big[ \big( Y_\ip + f(Y_\ip,0) \hip  \big) \Hip \Big]$, 
		the term $2 (d+1) L_y^2$ can be replaced by $2 L_y^2$ in $\wh M_y$ and the term $2 (d+1) L_z^2$ by $(d + 2) L_z^2$ in $\wh h_0$.
		That makes $\wh M_y(\hip)$ smaller and $h^0_\ip(\Norm{Y}_\infty)$ bigger : the scheme is numerically stable from a bigger size of the time steps.
		So it is in a sense more numerically stable.
	\end{remark}

	\begin{remark}
		Notice that the condition \kDS is not sharp. 

		Firstly, in order to obtain $\wh M_y < 0$, we imposed that the second and third term be bounded above by $-\half M_y$ and $-\frac14 M_y$. 
		Clearly, one could impose a smaller upper bound for the third term (leading to requiring a smaller $h^0_\ip(\Norm{Y_\ip}_\infty)$) and have more room for the second term.
		Specifically, we could replace the condition $\half \alpha_z \le -\half M_y$ by $\half \alpha_z \le - \mu M_y$, for $\mu \in [\half,1[$.
		
		Secondly, we could define $\alpha_z = \frac{2}{\nu} L_z^2$, for $\nu \in [\half,1[$ instead of $\nu=\half$, 
		leading to a term $\half - \frac{L_z^2}{\alpha_z} = \frac{1-\nu}{2}$ instead of $\frac14$. 
		We would then define a smaller $\wh h_0 = \frac{1-\nu}{4} \frac{1}{2(d+1) L_z^2}$,
		and we would end up with $\frac{1-\nu}{4}$ on the LHS instead of $\frac18$. 
		This smaller $\alpha_z$ leads to a smaller second term in $\wh M_y$. 
		
		Combining the two, the condition \kDS could be replaced by $\frac{L_z^2 }{-M_y} \le  \mu \nu$. So the sharper condition is rather $\frac{L_z^2 }{-M_y} < 1$. 
		The closer to $1$ this ratio is, the smaller the time-steps would have to be.
		What is important here is not so much the sharpness of the numerical constants in \kDS, as the fact that $f$ must be so that $\frac{L_z^2}{-M_y} \le c$.
	\end{remark}

	\begin{remark}
		When $M_y=0$ (and then necessarily $L_z=0$), since we have $0 = M_y < \wh M_y(\hip)$ for all $\hip > 0$, 
		we see that the analysis above does not guarantee the numerical stability of the scheme.
	\end{remark}

We now consider, for a fixed $n \in \N^*$, the grid $\pi^n$ and the associated approximations $(Y^n_i,Z^n_i)_{i = 0 \ldots N}$.
Recall that $\wh M_y = M_y/4 \le 0$.

	\begin{corollary}		\label{proposition---size.estimate.all.steps.via.squaring.multidim}
		Let \kdSn holds. We have, for all $i$,
			\begin{align*}
				\Norm{Y^n_i}_\infty \le e^{\wh{M}_y (T-\tni)} \Norm{\xi^N}_\infty \le \Norm{\xi^N}_\infty.
			\end{align*}
	\end{corollary}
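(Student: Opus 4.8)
The plan is to run a backward induction on $i$, exactly parallel to the proof of Corollary~\ref{proposition---size.estimate.all.steps.via.linearization.1d}, but invoking Proposition~\ref{proposition---size.estimate.one.step.via.squaring.multidim} at each step in place of the one-dimensional per-step estimate. For the base case $i=N$ there is nothing to do: $Y^n_N=\xi^N$ and $e^{\wh M_y(T-\tni)}=1$ there. For the induction step I would assume $\Norm{Y^n_\ip}_\infty\le e^{\wh M_y(T-\tnip)}\Norm{\xi^N}_\infty$ and propagate it to index $i$.

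The crux is to check that the prescribed time-step $\hnip$ from \eqref{equation---reference--definition.of.hnip.multidim} satisfies $\hnip\le h^0_\ip(\Norm{Y^n_\ip}_\infty)$, the threshold appearing in Proposition~\ref{proposition---size.estimate.one.step.via.squaring.multidim}. Raising the induction hypothesis to the power $2(m-1)$ and using $\wh M_y=M_y/4$ gives $\Norm{Y^n_\ip}^{2(m-1)}_\infty\le e^{(m-1)\frac{M_y}{2}(T-\tnip)}\Norm{\xi^N}^{2(m-1)}_\infty$; since $\Norm{\xi^N}_\infty\le\Norm{\xi}_\infty$ (the a priori bound baked into \eqref{equation---reference--definition.of.hnip.multidim}) and $x\mapsto\frac{-M_y}{(d+1)L_y^2(1+x)}$ is decreasing, the first term of the $\min$ defining $\hnip$ is $\le\frac14\,\frac{-M_y}{(d+1)L_y^2(1+\Norm{Y^n_\ip}^{2(m-1)}_\infty)}$. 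The second term of $\hnip$ is literally $\wh h_0=\frac18\frac{1}{2(d+1)L_z^2}$, the term $\hmax$ is common to both minima, and the extra terms $\tnip$ and $T/n$ only shrink $\hnip$ further; hence $\hnip\le h^0_\ip(\Norm{Y^n_\ip}_\infty)$.

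With that in hand, Proposition~\ref{proposition---size.estimate.one.step.via.squaring.multidim} applies: \kdSn holds by hypothesis and $\hnip\le h^0_\ip(\Norm{Y^n_\ip}_\infty)$, so $\wh M_y(\hnip)\le\wh M_y<0$ and therefore
\[
\abs{Y^n_i}^2+\tfrac18\abs{Z^n_i}^2\hnip\ \le\ e^{2\wh M_y(\hnip)\hnip}\,E_i\big[\abs{Y^n_\ip}^2\big]\ \le\ e^{2\wh M_y\hnip}\,E_i\big[\abs{Y^n_\ip}^2\big].
\]
Dropping the nonnegative $Z$-term and using $E_i[\abs{Y^n_\ip}^2]\le\Norm{Y^n_\ip}^2_\infty$ almost surely yields $\Norm{Y^n_i}_\infty\le e^{\wh M_y\hnip}\Norm{Y^n_\ip}_\infty$; chaining with the induction hypothesis and using $\hnip=\tnip-\tni$ gives $\Norm{Y^n_i}_\infty\le e^{\wh M_y(T-\tni)}\Norm{\xi^N}_\infty$, and the final inequality follows since $\wh M_y\le0$ and $T-\tni\ge0$.

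I do not expect a genuine obstacle: the argument is essentially a transcription of the one-dimensional corollary. The only point needing care is the bookkeeping in the second paragraph—matching the a priori exponential factor $e^{(m-1)\frac{M_y}{2}(T-\tnip)}$ hard-coded in the grid \eqref{equation---reference--definition.of.hnip.multidim} with the value $\Norm{Y^n_\ip}^{2(m-1)}_\infty$ actually produced by the induction, and making sure the comparison between $\Norm{\xi^N}_\infty$ and the quantity used in the grid points the right way.
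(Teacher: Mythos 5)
Your argument is correct and is precisely the ``simple backward induction and the definition of $\pi^n$'' that the paper's one-line proof alludes to, with Proposition \ref{proposition---size.estimate.one.step.via.squaring.multidim} supplying the per-step contraction; your exponent bookkeeping ($2(m-1)\cdot M_y/4=(m-1)M_y/2$) matches the factor hard-coded in \eqref{equation---reference--definition.of.hnip.multidim}. The only loose point, the identification $\Norm{\xi^N}_\infty\le\Norm{\xi}_\infty$ needed to compare the grid's a priori bound with the inductive one, is exactly the same implicit step the paper itself takes in the proof of Corollary \ref{proposition---size.estimate.all.steps.via.linearization.1d}, so your write-up is faithful to the paper's approach.
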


	\begin{proof} 
		The result follows from a simple backward induction and the definition of $\pi^n$.
	\end{proof}

\section{Global error of the scheme}			\label{section---convergence.of.the.scheme}

In this section we analyze the global error of the scheme, compared to the continuous-time solution. 
Note that the convergence is not a real issue, since the terminal condition is bounded and the partition $\pi^n$ becomes the regular one asymptotically. But the goal here is to determine non-asymptotic error bounds that are as small as possible and valid for as large a $\abs{\pi}$ as possible.

\subsection{Stability of the scheme}
\label{section---convergence--stability.of.the.scheme}

For an arbitrary time-grid $\pi$, we consider bounded inputs $\wh{Y}_\ip$ and $Y_\ip$ at time $\tip$, and the respective outputs $(\wh{Y}_i,\wh{Z}_i)$ and $(Y_i,Z_i)$ at time $\ti$ from one step of the scheme \TimeDiscScheme. We denote generically by $\delta x$ the quantity $\wh{x}-x$.

	\begin{proposition}		\label{proposition---stability.of.the.scheme}
		Let $\wh Y_\ip$ and $Y_\ip$ be bounded inputs for \TimeDiscScheme, and let $(\wh{Y}_i,\wh{Z}_i)$ and $(Y_i,Z_i)$ be the respective outputs.
		Assume that $\hip \le \wh h_0$, where we recall that $\wh h_0 = \frac{1}{8} \ \frac{1}{2(d+1) L_z^2}$,
		and define $\wt M_{y,\ip} = \wt M_y(\hip,\norm{\wh{Y}_\ip}^{2(m-1)}_\infty,\norm{Y_\ip}^{2(m-1)}_\infty)$ as
			\begin{align*}
				\wt M_{y,\ip} = M_y + 2 L_z^2 + 3 (d+1) L_y^2 \hip \big( 1 + \norm{\wh{Y}_\ip}^{2(m-1)}_\infty + \norm{Y_\ip}^{2(m-1)}_\infty \big) .
			\end{align*}		
		Then,
			\begin{align*}
				\abs{\delta Y_\ip}^2 + \frac{1}{8} \abs{\delta Z_i}^2 \hip 
						\le e^{ 2 \wt M_{y,\ip} \hip } E_i\big[ \abs{\delta Y_\ip}^2 \big] .
			\end{align*}				
	\end{proposition}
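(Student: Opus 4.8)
The plan is to reproduce, almost verbatim, the squaring argument from the proof of Proposition~\ref{proposition---size.estimate.one.step.via.squaring.multidim}, but applied to the difference $(\delta Y, \delta Z)$ rather than to $(Y,Z)$ itself. First I would subtract the two instances of the scheme~\TimeDiscScheme, obtaining $\delta Y_i = E_i[\delta Y_\ip + \delta f_\ip \hip]$ and $\delta Z_i = E_i[\delta Y_\ip \Hip^*]$, where $\delta f_\ip := f(\wh Y_\ip,\wh Z_i) - f(Y_\ip,Z_i)$, and rewrite the first identity as $\delta Y_i + \Delta M_\ip = \delta Y_\ip + \delta f_\ip \hip$, with $\Delta M_\ip$ the corresponding centered martingale increment ($\F_\tip$-measurable, minus its $E_i$-conditional expectation). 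Squaring, taking $E_i[\cdot]$, and using $E_i[\scalar{\delta Y_i}{\Delta M_\ip}] = 0$ gives $\abs{\delta Y_i}^2 + E_i[\abs{\Delta M_\ip}^2] = E_i\big[\abs{\delta Y_\ip}^2 + 2\scalar{\delta Y_\ip}{\delta f_\ip}\hip + \abs{\delta f_\ip}^2\hip^2\big]$.

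Next I would estimate the two driver terms by the same splitting $\delta f_\ip = \big(f(\wh Y_\ip,\wh Z_i) - f(Y_\ip,\wh Z_i)\big) + \big(f(Y_\ip,\wh Z_i) - f(Y_\ip,Z_i)\big)$. Monotonicity \MonY bounds $\scalar{\delta Y_\ip}{f(\wh Y_\ip,\wh Z_i) - f(Y_\ip,\wh Z_i)}$ by $M_y\abs{\delta Y_\ip}^2$, while \LipZ plus a Young inequality with parameter $\alpha_z = 4L_z^2$ bounds the cross term with the second bracket by $\alpha_z\abs{\delta Y_\ip}^2 + \frac{L_z^2}{\alpha_z}\abs{\delta Z_i}^2$; this produces the $2L_z^2$ in $\wt M_{y,\ip}$. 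For the remainder term, \RegY and \LipZ give $\abs{\delta f_\ip} \le L_y(1 + \abs{\wh Y_\ip}^{m-1} + \abs{Y_\ip}^{m-1})\abs{\delta Y_\ip} + L_z\abs{\delta Z_i}$, and squaring while using $(1+a+b)^2 \le 3(1+a^2+b^2)$ yields $\abs{\delta f_\ip}^2 \le 6L_y^2(1 + \abs{\wh Y_\ip}^{2(m-1)} + \abs{Y_\ip}^{2(m-1)})\abs{\delta Y_\ip}^2 + 2L_z^2\abs{\delta Z_i}^2$; this factor $3$ is precisely the origin of the coefficient $3(d+1)L_y^2$ in the statement.

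Then, exactly as in Proposition~\ref{proposition---size.estimate.one.step.via.squaring.multidim}, I would decompose $\Delta M_\ip = \zeta_i\Lambda_i^{-1}\Hip\hip + \Delta N_\ip$ with $\zeta_i = E_i[(\delta Y_\ip + \delta f_\ip\hip)\Hip^*]$ and $\Delta N_\ip$ orthogonal to $\Hip$, so $E_i[\abs{\Delta M_\ip}^2] = \abs{\zeta_i}^2\Lambda_i^{-1}\hip + E_i[\abs{\Delta N_\ip}^2]$. Setting $D_i := \delta Z_i - \zeta_i = E_i[-\delta f_\ip\hip\Hip^*]$, Cauchy--Schwarz gives $\abs{D_i}^2\Lambda_i^{-1}\hip \le d\hip^2 E_i[\abs{\delta f_\ip}^2]$, and $\abs{\zeta_i}^2 \ge \half\abs{\delta Z_i}^2 - \abs{D_i}^2$. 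Substituting everything back, discarding the nonnegative $E_i[\abs{\Delta N_\ip}^2]$, using $\Lambda_i^{-1}\ge 1$, and using $\hip \le \wh h_0$ (so that $2(d+1)L_z^2\hip \le \frac18$) to absorb the three sources of $\abs{\delta Z_i}^2\hip^2$ on the right --- the Young term, the $\abs{\delta f_\ip}^2\hip^2$ term, and the $\abs{D_i}^2$ term --- leaves $\frac18\abs{\delta Z_i}^2\hip$ on the left, while the $\abs{\delta Y_\ip}^2$ coefficient collapses to $1 + (2M_y + 4L_z^2)\hip + 6(d+1)L_y^2(1+\abs{\wh Y_\ip}^{2(m-1)}+\abs{Y_\ip}^{2(m-1)})\hip^2 = 1 + 2\wt M_{y,\ip}\hip$; then $1+x\le e^x$ and taking $L^\infty$ norms in the polynomial factors finish the estimate.

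Since this is a close variant of a computation already performed in the paper, I do not expect a genuine obstacle: the only delicate point is the constant bookkeeping, in particular confirming that the $(1+a+b)^2 \le 3(1+a^2+b^2)$ inflation delivers exactly $3(d+1)L_y^2$ (a $3$ here rather than the $2$ of Proposition~\ref{proposition---size.estimate.one.step.via.squaring.multidim}, since there are now two distinct size terms $\norm{\wh Y_\ip}_\infty$ and $\norm{Y_\ip}_\infty$ instead of one), and checking that all the $\abs{\delta Z_i}^2\hip^2$ contributions together stay below $\frac18\abs{\delta Z_i}^2\hip$ under $\hip\le\wh h_0$. I would also note that, unlike Proposition~\ref{proposition---size.estimate.one.step.via.squaring.multidim}, assumption \fzero is not used here since everything is expressed in terms of differences, and that the degenerate case $L_z = 0$ is handled by the same convention $\frac{L_z^2}{\alpha_z} = 0$, $\wh h_0 = +\infty$ as in Remark~\ref{remark---Lz.null.in.size.estimate.one.step.via.squaring.multidim}.
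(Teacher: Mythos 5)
Your proposal reproduces the paper's proof essentially line for line: the same centered-martingale rewriting and squaring, the same splitting of $\delta f_\ip$ with \MonY, \LipZ and the Young parameter $\alpha_z = 4L_z^2$, the same orthogonal decomposition of the martingale increment with the correction term $D_i$ estimated by Cauchy--Schwarz, and the same constant bookkeeping yielding $1+2\wt M_{y,\ip}\hip$ under $\hip\le\wh h_0$. The approach and all key constants match; your side remarks (that \fzero is not needed here, and the origin of the factor $3(d+1)L_y^2$) are also accurate.
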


	\begin{proof}
		Again, we write $\delta Y_i + \delta \Delta M_\ip = \delta Y_\ip + \delta f_\ip \hip$, where $\delta \Delta M_\ip = \delta Y_\ip + \delta f_\ip \hip - E_i[\delta Y_\ip + \delta f_\ip \hip]$ 
		and $\delta f_\ip = f(\wh{Y}_\ip,\wh{Z}_i) - f(Y_\ip,Z_i)$.
		Squaring and taking conditional expectation, we have
			\begin{align*}
				\abs{\delta Y_\ip}^2 + 0 + E_i[\abs{\delta \Delta M_\ip}^2] = E_i\Big[ \abs{\delta Y_\ip}^2 + 2 \scalar{\delta Y_\ip}{\delta f_\ip \hip} + \abs{\delta f_\ip}^2 \hip^2 \Big] . 
			\end{align*}
		Using \MonY and \LipZ we have, for some $\alpha_z >0$ yet to be chosen, 
			\begin{align*}
				\scalar{\delta Y_\ip}{\delta f_\ip}
					& =  \scalar{\delta Y_\ip}{ f(\wh{Y}_\ip,\wh{Z}_i) - f(Y_\ip,\wh{Z}_i) } + \scalar{\delta Y_\ip}{ f(Y_\ip,\wh{Z}_i) - f(Y_\ip,Z_i) }										\\
					&\le M_y \abs{\delta Y_\ip}^2 + \abs{\delta Y_\ip} L_z \abs{\delta Z_i}																													\\
					&\le \left( M_y + \frac{\alpha_z}{2} \right) \abs{\delta Y_i}^2 + \frac{L_z^2}{2\alpha_z} \abs{\delta Z_i}^2 .
			\end{align*}
		So we have
			\begin{align*}
				\abs{\delta Y_\ip}^2 + E_i[\abs{\delta \Delta M_\ip}^2] 
					\le E_i\Big[ \Big\{ 1 + \big[ 2 M_y + \alpha_z \big] \hip \Big\} \abs{\delta Y_\ip}^2 + \abs{\delta f_\ip}^2 \hip^2 \Big] + \frac{L_z^2}{\alpha_z} \abs{\delta Z_i}^2 \hip . 
			\end{align*}
		
		Now we handle the term $\delta \Delta M_\ip$.
		It can be uniquely decomposed as $\delta \Delta M_\ip = \delta \zeta_i \Lambda_i^{-1} \Hip \hip + \delta \Delta N_\ip$, where $\delta \Delta N_\ip$ is a martingale increment orthogonal to 
		$\Hip$ and $\delta \zeta_i = E_i\big[  \big( \delta Y_\ip + \delta f_\ip \hip \big)  \Hip^* \big]$.
		By orthogonality and using a Young inequality,
			\begin{align*}
				E_i[\abs{\delta \Delta M_\ip}^2] 
					& = E_i[\abs{\delta \zeta_i \Lambda_i^{-1} \Hip \hip}^2 ] + E_i[\abs{\delta \Delta N_\ip}^2]																												\\
					& = \abs{\delta \zeta_i}^2 \Lambda_i^{-1} \hip  + E_i[\abs{\delta \Delta N_\ip}^2]																																			\\
					&\ge \half \abs{\delta Z_i}^2 \Lambda_i^{-1} \hip - \abs{\delta D_i}^2 \Lambda_i^{-1} \hip + E_i[\abs{\delta \Delta N_\ip}^2] ,
			\end{align*}
		since $\delta \zeta_i = \delta Z_i - \delta D_i$. We therefore have
			\begin{align*}
				\abs{\delta Y_\ip}^2 +& \left( \frac{\Lambda_i^{-1}}{2} - \frac{L_z^2}{\alpha_z} \right) \abs{\delta Z_i}^2 \hip + E_i[\abs{\delta \Delta N_\ip}^2]									\\
					&\le E_i\Big[ \Big\{ 1 + \big[ 2 M_y + \alpha_z \big] \hip \Big\} \abs{\delta Y_\ip}^2 + \abs{\delta f_\ip}^2 \hip^2 \Big]  +  \abs{\delta D_i}^2 \Lambda_i^{-1} \hip . 
			\end{align*}
		
		We now estimate $\abs{\delta D_i}^2 \Lambda_i^{-1} \hip$. We have
			\begin{align*}
				\delta D_i = \delta Z_i - \delta \zeta_i = E_i\left[ \bigg\{ 0 - \Big( f(\wh{Y}_\ip,\wh{Z}_i)-f(Y_\ip,Z_i) \Big) \hip  \bigg\} \Hip^* \right] 
			\end{align*}		
		so, using the Cauchy--Schwartz inequality, 
			\begin{align*}
				\abs{\delta D_i}^2 \Lambda_i^{-1} \hip 
					&\le \Lambda_i^{-1} \hip E_i\left[ \abs{\Hip \hip}^2 \right] E_i\left[ \Abs{ f(\wh{Y}_\ip,\wh{Z}_i)-f(Y_\ip,Z_i) }^2 \right]																					\\
					&\le d \hip^2 E_i\left[ \abs{\delta f_\ip}^2 \right] .
			\end{align*}
		
		Using \RegY and \LipZ, we then estimate
			\begin{align*}
				\abs{\delta f_\ip}^2
					&\le 6 L_y^2 \big( 1 + \abs{\wh{Y}_\ip}^{2(m-1)} + \abs{Y_\ip}^{2(m-1)} \big) \abs{\delta Y_\ip}^2 + 2 L_z^2 \abs{\delta Z_i}^2 .
			\end{align*}
		Gathering these estimates, we have
			\begin{align*}
				\abs{\delta Y_\ip}^2 +& \left( \half - \frac{L_z^2 \Lambda_i}{\alpha_z} \right) \abs{\delta Z_i}^2  \Lambda_i^{-1} \hip + E_i[\abs{\delta \Delta N_\ip}^2]										\\
					&\le E_i\Big[ \big\{ 1 + [ 2 M_y + \alpha_z ] \hip \big\} \abs{\delta Y_\ip}^2 \Big]
								+ 2 (d+1) L_z^2 \abs{\delta Z_i}^2 \hip^2 																																\\
							&\qquad \qquad +  6 (d+1) L_y^2 \hip^2 E_i\Big[ \big( 1 + \abs{\wh{Y}_\ip}^{2(m-1)} + \abs{Y_\ip}^{2(m-1)} \big) \abs{\delta Y_\ip}^2 \Big] ,
			\end{align*}				
		which then implies, since $\Lambda_i^{-1} \ge 1$, that
			\begin{align*}
				&\abs{\delta Y_\ip}^2 
							+ \left( \half - \frac{L_z^2}{\alpha_z} - 2 (d+1) L_z^2 \hip \right) \abs{\delta Z_i}^2 \hip 																\\
					&\le E_i\bigg[ \Big\{ 1 + \big[ 2 M_y + \alpha_z 
															+ 6 (d+1) L_y^2 \hip ( 1 + \norm{\wh{Y}_\ip}^{2(m-1)}_\infty + \norm{Y_\ip}^{2(m-1)}_\infty ) \big] \hip \Big\} \abs{\delta Y_\ip}^2 \bigg] .
			\end{align*}				
		We therefore take $\alpha_z$ such that $\frac{L_z^2}{\alpha_z} = \frac14$, i.e. $\alpha_z = 4  L_z^2$.
		Also, we have assumed that $\hip \le \wh h_0 = \frac18 \, \frac{1}{2 (d+1) L_z^2}$ so $\le 2 (d+1) L_z^2 \hip \le \frac18$. 
		Thus we have indeed
			\begin{align*}
				\abs{\delta Y_\ip}^2 + \frac{1}{8} \abs{\delta Z_i}^2  \hip 
						\le e^{ 2 \wt M_y(\hip,\norm{\wh Y_\ip}_\infty,\norm{Y_\ip}_\infty) \, \hip } E_i\Big[ \abs{\delta Y_\ip}^2 \Big] ,
			\end{align*}				
		as desired.
 	\end{proof}

We will want to use this $L^2$-stability estimate for the inputs $Y_\tip$ and $Y_\ip$ (the BSDE solution and its approximation, at time $\tip$).
In doing this, we may want to ensure that $\wt M_y(\hip,\norm{Y_\tip}_\infty,\norm{Y_\ip}_\infty) < 0$. 
This is not automatically guaranteed (non-asymptotically, that is) when using the time-grid $\pi^n$, but can easily be obtained by using a grid with smaller time-steps.
Therefore, we introduce the grid $\wt \pi^n = (\wt t^n_i)_{i= 0 \ldots \wt N}$ constructed like $\pi^n$ but with smaller time-steps $\wt h^n_\ip$ defined by
	\begin{align*}
		\wt h^n_\ip = \min\left\{ \frac14 \ \frac{-M_y}{3 (d+1) L_y^2 \big( 1 + 2 \ e^{2(m-1) \wh M_y (T-\wt t^n_\ip)} \Norm{g}_\infty^{2(m-1)} \big)} \ , \ \hnip(\wt t^n_\ip) \right\} ,
	\end{align*}
where $\hnip(\wt t^n_\ip)$ is given by \eqref{equation---reference--definition.of.hnip.1dim}-\eqref{equation---reference--definition.of.hnip.multidim} but with $\tnip$ replaced by $\wt t^n_\ip$.
We still denote by $(Y^n_i,Z^n_i)_{i = 0 \ldots \wt N}$ the resulting approximations.

Because the time-steps $\wt h^n_\ip$ are smaller than $\hnip$, we can obtain again from 
corollary \ref{proposition---size.estimate.all.steps.via.linearization.1d} or \ref{proposition---size.estimate.all.steps.via.squaring.multidim} that 
$\Norm{Y^n_i}_\infty \le e^{\wh M_y (T-\wt t^n_i)} \Norm{g}_\infty$. 
Meanwhile, for the BSDE solution, we have $\norm{Y_{\wt t^n_i}}_\infty \le e^{M_y (T-\wt t^n_i)} \Norm{g}_\infty \le e^{\wh M_y (T-\wt t^n_i)} \Norm{g}_\infty$. 
So we have  
	\begin{align*}
		\wt M_{y}(& \hip, \norm{Y_{\wt t^n_\ip}}_\infty, \norm{Y^n_\ip}_\infty) 																																			\\
			&\le \wt M_{y}^{g}(\wt t^n_\ip,\wt h^n_\ip) := M_y + 2 L_z^2 + 3 (d+1) L_y^2 \ \wthnip \big( 1 + 2 \ e^{2(m-1) \wh M_y (T-\wttnip)} \Norm{g}_\infty^{2(m-1)} \big) .
	\end{align*}
Now, by \kdSn we have $2L_z^2 \le (-M_y)/2$ and
by definition of $\wt h^n_\ip$ we have the third term bounded above by $(-M_y)/4$.
That means we have $ M_{y}^{g}(\wttnip,\wthnip) \le \frac14 M_y = \wh M_y$.

\paragraph*{} 
Regardless of whether the time-grid $\pi^n$ or $\wt \pi^n$ is used, we certainly have $\norm{Y_\tip}_\infty$ and $\norm{Y^n_\ip}_\infty \le \Norm{g}_\infty$. 
So we will always have $\wt M_{y}(\hip, \norm{Y_{\wt t^n_\ip}}_\infty, \norm{Y^n_\ip}_\infty) \le \wt M_{y}^{g}(\hip)$ where 
	\begin{align}		\label{equation---definition.of.wtMg.of.h}
		\wt M_{y}^{g}(\hip) := M_y + 2 L_z^2 + 3 (d+1) L_y^2 \ \hip \big( 1 + 2 \Norm{g}_\infty^{2(m-1)} \big) .
	\end{align}
This $\wt M_{y}^{g}(\hip)$ is not guaranteed to be negative, but will become so as $\hip \goesto 0$.

\subsection{Local errors due to the time-discretization}		
\label{section---convergence--time-discretization.error}

In this subsection, given a partition $\pi$, we estimate the total error due to the time-discretization, defined as the sum of the local time-discretization errors.
For all $i = 0 \ldots N-1$, we denote from now on by $(\wh Y_i,\wh Z_i)$ the output of one step of the scheme \TimeDiscScheme when the input is $Y_\tip$. 
So here
	\begin{align*}
		\left\{\begin{aligned}
			\wh Y_i &= E_i\Big[ Y_\tip + f(Y_\tip,Z_i) \hip \Big]											\\
			\wh Z_i &= E_i\Big[ Y_\tip \Hip^* \Big] .
		\end{aligned}\right.
	\end{align*}

	\begin{proposition}		
	\label{proposition---estimation.of.the.discretization.error}	
		There exists constants $C^Z, C^Y \ge 0$ (independent of $\pi$) such that 
			\begin{align*}
				\sum_{i=0}^{N-1} \bE\big[\, \abs{\overline{Z}_\ti - \widehat{Z}_i}^2 \big] h \le C^Z \ \abs{\pi}
				\qquad\text{and}\qquad 
				\sum_{i=0}^{N-1} \frac{\bE\big[\, \abs{Y_\ti - \widehat{Y}_i}^2 \big]}{\hip} \le C^Y \ \abs{\pi}  .
			\end{align*}
	\end{proposition}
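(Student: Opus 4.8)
The plan is to control each local error by comparing the one-step map of \TimeDiscScheme, fed with the true solution $Y_\tip$, to the continuous-time dynamics \eqref{equation---reference--BSDE} on $[\ti,\tip]$, and then to sum over $i$; the only ingredients are the a priori bounds \eqref{equation---reference--bounds.on.X.Y.Z}, the path-regularity estimates \eqref{equation---reference--path.regularity.Y.Z}, the truncation bound \eqref{equation---reference--bound.on.error.from.truncating.H}, the boundedness $\Norm{Y}_{\S^\infty}\le\Norm{g}_\infty$, and assumptions \RegY, \LipZ, \fzero. As a preliminary I would record that \RegY, \LipZ and \fzero give $\abs{f(Y_u,Z_u)}\le L_y\big(1+\Norm{g}_\infty^{m-1}\big)\Norm{g}_\infty+L_z\abs{Z_u}$, so that by \eqref{equation---reference--bounds.on.X.Y.Z} the quantity $\bE\big[\int_0^T\abs{f(Y_u,Z_u)}^2\,du\big]$ is a finite constant independent of $\pi$.

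For the $Z$-estimate I would introduce the intermediate quantity $Z^W_i:=\tfrac{1}{\hip}E_i[Y_\tip\,\Delta W_\tip^*]$. Conditioning \eqref{equation---reference--BSDE} on $[\ti,\tip]$ and using It\^o's isometry gives $E_i[Y_\tip\,\Delta W_\tip^*]=E_i\big[\int_\ti^\tip Z_u\,du\big]-E_i\big[\big(\int_\ti^\tip f(Y_u,Z_u)\,du\big)\Delta W_\tip^*\big]$, the $Y_\ti$-contribution vanishing since $E_i[\Delta W_\tip]=0$; equivalently $Z^W_i-\wb{Z}_\ti=-\tfrac{1}{\hip}E_i\big[\big(\int_\ti^\tip f(Y_u,Z_u)\,du\big)\Delta W_\tip^*\big]$. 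Conditional Cauchy--Schwarz and the independence of $\Delta W_\tip$ from $\F_\ti$ then give $\abs{Z^W_i-\wb{Z}_\ti}^2\le d\,E_i\big[\int_\ti^\tip\abs{f(Y_u,Z_u)}^2\,du\big]$, so $\sum_i\bE[\abs{Z^W_i-\wb{Z}_\ti}^2]\hip\le d\,\abs{\pi}\,\bE\big[\int_0^T\abs{f(Y_u,Z_u)}^2\,du\big]=O(\abs{\pi})$. Next I would pass from $Z^W_i$ to $\wh Z_i$: since $\Hip-\Delta W_\tip/\hip$ is a mean-zero function of $\Delta W_\tip$ --- by symmetry of the componentwise truncation --- it is independent of $\F_\ti$ with zero conditional mean, so subtracting the $\F_\ti$-measurable $Y_\ti$ yields $\wh Z_i-Z^W_i=E_i\big[(Y_\tip-Y_\ti)\big(\Hip-\Delta W_\tip/\hip\big)^*\big]$; Cauchy--Schwarz and \eqref{equation---reference--bound.on.error.from.truncating.H} then give $\abs{\wh Z_i-Z^W_i}^2\le C^H\,E_i[\abs{Y_\tip-Y_\ti}^2]$, hence $\sum_i\bE[\abs{\wh Z_i-Z^W_i}^2]\hip=O(\abs{\pi})$ by $\mathrm{REG}_{Y,2}(\abs{\pi})\le C_2\abs{\pi}^{1/2}$. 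Combining via $\abs{a+b}^2\le2\abs{a}^2+2\abs{b}^2$ delivers the first bound.

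For the $Y$-estimate, conditioning \eqref{equation---reference--BSDE} on $[\ti,\tip]$ gives $Y_\ti-\wh Y_i=E_i\big[\int_\ti^\tip\big(f(Y_u,Z_u)-f(Y_\tip,\wh Z_i)\big)\,du\big]$, hence by Cauchy--Schwarz $\abs{Y_\ti-\wh Y_i}^2/\hip\le E_i\big[\int_\ti^\tip\abs{f(Y_u,Z_u)-f(Y_\tip,\wh Z_i)}^2\,du\big]$. I would split $f(Y_u,Z_u)-f(Y_\tip,\wh Z_i)=\big(f(Y_u,Z_u)-f(Y_\tip,Z_u)\big)+\big(f(Y_\tip,Z_u)-f(Y_\tip,\wh Z_i)\big)$ and bound the first piece using \RegY with $\Norm{Y}_{\S^\infty}\le\Norm{g}_\infty$ (to control the polynomial factor), the second using \LipZ, so that the integrand is $\le C\big(\abs{Y_u-Y_\tip}^2+\abs{Z_u-\wh Z_i}^2\big)$ with $C=C(L_y,L_z,\Norm{g}_\infty,m)$. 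Summing over $i$ and taking expectations, the $Y$-part is $O(\abs{\pi})$ by $\mathrm{REG}_{Y,2}(\abs{\pi})\le C_2\abs{\pi}^{1/2}$; for the $Z$-part I would insert $\wb{Z}_\ti$, giving $\abs{Z_u-\wh Z_i}^2\le2\abs{Z_u-\wb{Z}_\ti}^2+2\abs{\wb{Z}_\ti-\wh Z_i}^2$, and then $\sum_i\bE\big[\int_\ti^\tip\abs{Z_u-\wb{Z}_\ti}^2\,du\big]=\mathrm{REG}_{Z,2}(\pi)^2=O(\abs{\pi})$ while $\sum_i\bE[\abs{\wb{Z}_\ti-\wh Z_i}^2]\hip=O(\abs{\pi})$ by the bound just proved. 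This yields the second estimate, with $C^Y$ depending only on $C^Z$, $C_2$, $L_y$, $L_z$, $\Norm{g}_\infty$, $m$ and $d$.

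The one genuinely non-routine point is the passage from the Euler increment $\Delta W_\tip/\hip$ to the truncated $\Hip$: this is exactly where the truncation bound \eqref{equation---reference--bound.on.error.from.truncating.H} enters, and one must check that the resulting error carries a factor $\hip$ so that it remains summable. Everything else reduces to conditional Cauchy--Schwarz, It\^o's isometry, and the a priori and path-regularity estimates; the discipline to maintain throughout is that each local error term appears with a weight $\hip$ (or inside an integral over $[\ti,\tip]$), so that the sums collapse to $O(\abs{\pi})$ with all constants manifestly independent of the grid $\pi$.
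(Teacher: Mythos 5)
Your proof is correct, and its skeleton --- peel off a driver-integral contribution and a Brownian-truncation contribution for $\wb Z_\ti-\wh Z_i$, then reduce the $Y$-error to the path regularity of $Y$ and $Z$ plus the already-proved $Z$-bound, summing everything against the weight $\hip$ --- is the same as the paper's. Two steps are executed differently, and both variants are sound. For the truncation error in $Z$, the paper first replaces $Y_\tip\Hip^*$ by $\big(\int_\ti^\tip Z_u\,dW_u+\int_\ti^\tip f(Y_u,Z_u)\,du\big)\Hip^*$ via the martingale-increment property and then controls $E_i\big[\int_\ti^\tip Z_u\,dW_u\,(\Delta W_\tip/\hip-\Hip)^*\big]$ by the It\^o isometry, paying with $\Norm{Z}_{\H^2}$; you instead center by the $\F_\ti$-measurable $Y_\ti$ and pay with $\mathrm{REG}_{Y,2}$ --- your observation that $E_i\big[\Hip-\Delta W_\tip/\hip\big]=0$ by symmetry of the componentwise truncation is exactly the point that makes this legitimate, and \eqref{equation---reference--bound.on.error.from.truncating.H} enters in the same way. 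For the $Y$-error, the paper does not invoke $\Norm{Y}_{\S^\infty}\le\Norm{g}_\infty$: it keeps the polynomial weight $\big(1+\abs{Y_u}^{m-1}+\abs{Y_\tip}^{m-1}\big)$ random, applies one more Cauchy--Schwarz, and pays with $\Norm{Y}_{\S^{4(m-1)}}$ and $\mathrm{REG}_{Y,4}$; your shortcut through the uniform bound on $Y$ is valid under the standing assumptions ($g$ bounded and \kdSc in force) and is simpler, while the paper's version survives verbatim for merely $L^p$-integrable terminal data. Since in Section \ref{section---extension.to.unbounded.term.cond.} the error theorem is in any case applied to the BSDE with truncated (hence bounded) terminal condition, nothing downstream breaks with your argument; the only price is that your constant $C^Y$ depends on $\Norm{g}_\infty$ rather than on a moment of $Y$.
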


This proof is very similar to the proof of the corresponding result in \cite{LionnetDosReisSzpruch2016}. 
It is given in the appendix for completeness.

\subsection{Global error estimate}

We now estimate the total error between the numerical approximation and the solution to the continuous-time BSDE.
More specifically, we want to estimate the following errors:
	\begin{align*}
		\mathrm{Error}_Y(n)^2 = \max_{i = 0 \ldots N^n} E\big[ \abs{Y_\tni - Y^n_i}^2 \big]   
		\quad \text{and} \quad 
		\mathrm{Error}_Z(n)^2 = E\bigg[ \sum_{i=0}^{N^n} \Abs{\wb Z_\tni - Z^n_i}^2 \hnip \bigg] .
	\end{align*}
We recall that the notation $\wt M_y^g(h)$ was introduced in \eqref{equation---definition.of.wtMg.of.h}.

	\begin{theorem}			\label{theorem---global.error.estimate}
		For every $\epsilon > 0$, there exists a constant $C_\epsilon \ge 0$ such that, for all $n$, 
			\begin{align*}
			 	\mathrm{Error}_Y(n)^2 \quad \text{and} \quad \mathrm{Error}_Z(n)^2 
			 		\le e^{(\epsilon+\wt M_y^g(\abs{\pi^n})) T} \ C_\epsilon \, \abs{\pi^n} \le e^{(\epsilon+\wt M_y^g(\hmax)) T} \ C_\epsilon \, \frac{T}{n} .
			\end{align*}
	\end{theorem}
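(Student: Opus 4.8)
The plan is to run a discrete Gr\"onwall argument on the squared $L^2$-errors $e_i := E[\abs{Y_\tni - Y^n_i}^2]$, and on the matching weighted sum of $Z$-errors, by inserting between the exact solution and its approximation the output of one step of the scheme fed with the \emph{exact} value $Y_\tnip$. For $i = 0 \ldots N-1$, let $(\wh Y_i,\wh Z_i)$ be as in Section~\ref{section---convergence--time-discretization.error} and write
\[
 Y_\tni - Y^n_i = (Y_\tni - \wh Y_i) + (\wh Y_i - Y^n_i), \qquad \wb Z_\tni - Z^n_i = (\wb Z_\tni - \wh Z_i) + (\wh Z_i - Z^n_i).
\]
In each line the first term is a local time-discretization error, controlled by Proposition~\ref{proposition---estimation.of.the.discretization.error}, and the second term is the difference between two runs of the scheme \eqref{equation---reference--ATS.scheme} on the inputs $Y_\tnip$ and $Y^n_\ip$, i.e.\ the error propagated through one step.

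First I would bound the propagated term. On the grid $\pi^n$ (or the finer grid $\wt\pi^n$ of Section~\ref{section---convergence--stability.of.the.scheme}) one has $\hnip \le \wh{h}_0$ by construction, so Proposition~\ref{proposition---stability.of.the.scheme} applies and yields
\[
 \abs{\wh Y_i - Y^n_i}^2 + \tfrac18 \abs{\wh Z_i - Z^n_i}^2 \hnip \le e^{2\wt M_{y,\ip}\hnip}\, E_i[\abs{Y_\tnip - Y^n_\ip}^2],
\]
with $\wt M_{y,\ip}$ evaluated at $\norm{Y_\tnip}_\infty$ and $\norm{Y^n_\ip}_\infty$. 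Since $\norm{Y_\tnip}_\infty \le \Norm{g}_\infty$ (the continuous-time bound recalled in Section~\ref{section---preliminaries}) and $\norm{Y^n_\ip}_\infty \le \Norm{g}_\infty$ (Corollary~\ref{proposition---size.estimate.all.steps.via.squaring.multidim}), the monotonicity of $\wt M_y^g$ gives $\wt M_{y,\ip} \le \wt M_y^g(\hnip) \le \wt M_y^g(\abs{\pi^n})$, with $\wt M_y^g$ as in \eqref{equation---definition.of.wtMg.of.h}. For the local term, Proposition~\ref{proposition---estimation.of.the.discretization.error} gives $\sum_i E[\abs{Y_\tni - \wh Y_i}^2]/\hnip \le C^Y\abs{\pi^n}$ and $\sum_i E[\abs{\wb Z_\tni - \wh Z_i}^2]\hnip \le C^Z\abs{\pi^n}$, and at the terminal date $e_N = E[\abs{\xi - \xi^N}^2] \le C^X\abs{\pi^n}$ by \eqref{equation---reference--error.bound.on.the.terminal.condition}.

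Then I would combine these. With the elementary bound $\abs{a+b}^2 \le (1+\epsilon\hnip)\abs{a}^2 + (1+\tfrac1{\epsilon\hnip})\abs{b}^2$, taking conditional and then total expectations, one gets the recursion
\[
 e_i \le (1+\epsilon\hnip)\, e^{2\wt M_y^g(\abs{\pi^n})\hnip}\, e_\ip + (1+\tfrac1{\epsilon\hnip})\, E[\abs{Y_\tni - \wh Y_i}^2],
\]
together with the companion inequality obtained by keeping $\tfrac18\abs{\wh Z_i - Z^n_i}^2\hnip$ on the left. Iterating down to $i$, the product of the multiplicative factors over $[\tni,T]$ is at most $e^{\epsilon T}\, e^{2(\wt M_y^g(\abs{\pi^n}))^+ T}$, where $x^+:=\max(x,0)$, and the total (local plus terminal) contribution sums to at most $\abs{\pi^n}\,C^Y\abs{\pi^n} + \tfrac1\epsilon C^Y\abs{\pi^n} + C^X\abs{\pi^n} \le C'_\epsilon\abs{\pi^n}$, so that $\mathrm{Error}_Y(n)^2 = \max_i e_i \le e^{\epsilon T}\, e^{2(\wt M_y^g(\abs{\pi^n}))^+ T}\, C'_\epsilon\abs{\pi^n}$. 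Because $\wt M_y^g$ is increasing, $\wt M_y^g(\abs{\pi^n}) \le \wt M_y^g(\hmax)$, so the factor $e^{2(\wt M_y^g(\abs{\pi^n}))^+ T}$ exceeds $e^{\wt M_y^g(\abs{\pi^n})T}$ by at most the $n$-independent factor $e^{(2(\wt M_y^g(\hmax))^+ - \wt M_y^g(0^+))T}$; absorbing it into $C_\epsilon$ gives $\mathrm{Error}_Y(n)^2 \le e^{(\epsilon+\wt M_y^g(\abs{\pi^n}))T}\,C_\epsilon\abs{\pi^n}$, and the last inequality of the statement follows from $\wt M_y^g(\abs{\pi^n}) \le \wt M_y^g(\hmax)$ and $\abs{\pi^n}\le T/n$. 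For $\mathrm{Error}_Z(n)^2$ one sums the retained $Z$-terms along the same iteration, which are dominated by the same right-hand side, and combines this with $\sum_i E[\abs{\wb Z_\tni-\wh Z_i}^2]\hnip \le C^Z\abs{\pi^n}$ via $\abs{a+b}^2\le 2\abs{a}^2+2\abs{b}^2$.

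The Young inequalities and the accounting of constants are routine; the delicate point is keeping the per-step damping exponent equal to $\wt M_y^g(\hnip)$. This is where the a priori size bounds $\norm{Y^n_\ip}_\infty,\norm{Y_\tnip}_\infty \le \Norm{g}_\infty$ enter, and also where \kdSn is used: it guarantees $\wt M_y^g(0^+) = M_y + 2L_z^2 \le \tfrac12 M_y < 0$, so that $\wt M_y^g(\abs{\pi^n})$ stays in the bounded interval $[\,M_y+2L_z^2,\ \wt M_y^g(\hmax)\,]$, which is exactly what makes the $n$-independent constant above finite. (Should one want the per-step exponent $\wt M_{y,\ip}$ itself to be genuinely negative, one iterates on $\wt\pi^n$ rather than $\pi^n$; nothing else changes.)
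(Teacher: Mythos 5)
Your proposal is correct and follows essentially the same route as the paper's proof: the same decomposition through $(\wh Y_i,\wh Z_i)$, the same Young inequality with weight $\epsilon\hnip$, the stability estimate of Proposition \ref{proposition---stability.of.the.scheme} combined with the a priori bounds $\norm{Y_\tnip}_\infty,\norm{Y^n_\ip}_\infty\le\Norm{g}_\infty$ to get the per-step damping $\wt M_y^g(\hnip)$, and the discrete Gr\"onwall iteration closed by Proposition \ref{proposition---estimation.of.the.discretization.error} and \eqref{equation---reference--error.bound.on.the.terminal.condition}. Your explicit absorption of the factor $2$ in the exponent into an $n$-independent constant is in fact slightly more careful bookkeeping than the paper's.
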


	\begin{proof}
		The proof consists, classically, in seeing the error at time $i$ as resulting from the error existing at time $i+1$ and an additional, local error. 
		The stability of the scheme (analyzed in subsection \ref{section---convergence--stability.of.the.scheme}) allows to control how existing errors propagate backward in time, 
		while the analysis of the total time-discretization error (done in subsection \ref{section---convergence--time-discretization.error}) allows to control the sum of local errors.
		
		Specifically, recalling the definition of $(\wh Y_i,\wh Z_i)$ from subsection \ref{section---convergence--time-discretization.error}, we write 
			\begin{align*}
				Y_\ti - Y_i = \big( Y_\ti - \wh Y_i \big) + \big( \wh Y_i - Y_i)
				\qquad \text{and} \qquad
				\wb Z_\ti - Z_i = \big( \wb Z_\ti - \wh Z_i \big) + \big( \wh Z_i - Z_i).
			\end{align*}
		Then, using this decomposition and a Young inequality, we have for any $\epsilon > 0$
			\begin{align*}
				E[\abs{Y_\ti-Y_i}^2] &\le \Big(1+\frac{1}{\epsilon \hnip} \Big) E[\abs{Y_\ti - \wh Y_i}^2] + (1+\epsilon \hnip) E[\abs{\wh Y_i - Y_i}^2] , \qquad \text{and}						\\
				E[\abs{\wb Z_\tni - Z_i}^2]  &\le 2 E[\abs{\wb Z_\ti - \wh Z_i}^2] + 2 E[\abs{\wh Z_i - Z_i}^2] .
			\end{align*}
		We will denote by $\tau^Y_i = E[\abs{Y_\ti - \wh Y_i}^2]$ and $\tau^Z_i = E[\abs{\wb Z_\ti - \wh Z_i}^2] \hip$, the local time-discretization errors on $Y$ and $Z$.
		Therefore, since $\hnip \le T$ and $1 \le 1+\epsilon \hnip \le e^{\epsilon \hnip}$, and using the $L^2$-stability of proposition \ref{proposition---stability.of.the.scheme}, 
			\begin{align*}
				E[ & \abs{Y_\ti-Y_i}^2] + \frac{1}{16} E[\abs{\wb Z_\tni - Z_i}^2] \hip 																															\\
					&\le (1+\epsilon \hnip) \Big( E[\abs{\wh Y_i - Y_i}^2] + \frac18 E[\abs{\wh Z_i - Z_i}^2] \hip \Big) + 
							\Big(\frac{T}{\hnip}+\frac{1}{\epsilon \hnip} \Big) \tau^Y_i + \frac18 \tau^Z_i 																												\\
					&\le e^{(\epsilon+\wt M_y^g(\hnip)) \hnip} E[\abs{Y_\tnip - Y_\ip}^2] + (T+\epsilon^{-1}) \frac{\tau^Y_i}{\hip} + \tau^Z_i .
			\end{align*}
		Then, ``iterating'' this estimate (see lemma A.3 in \cite{LionnetDosReisSzpruch2016}), we obtain for all $i$ that
			\begin{align*}
				E[ \abs{Y_\ti-Y_i}^2] + & \frac{1}{16} E\bigg[ \sum_{j=i}^{N-1} \abs{\wb Z_{t^n_j} - Z_j}^2 \hip \bigg]  																			\\
					&\le \exp\bigg(\sum_{j=i}^{N-1} (\epsilon +\wt M_y^g(\hnip)) \hnip\bigg) E[\abs{\xi - \xi^n}^2] 																						\\
							&\qquad + \exp\bigg(\sum_{j=i+1}^{N-1} (\epsilon +\wt M_y^g(\hnip)) \hnip\bigg) \sum_{j=i}^{N-1} (T+\epsilon^{-1}) \frac{\tau^Y_j}{h_{j+1}} + \tau^Z_j .
			\end{align*}
		So we see that we have in the end
			\begin{align*}
				E[ \abs{Y_\ti-Y_i}^2] + \frac{1}{16} E\bigg[ \sum_{j=i}^{N-1} & \abs{\wb Z_{t^n_j} - Z_j}^2 \hip \bigg]																				\\
					&\le e^{(\epsilon +\wt M_y^g(\abs{\pi^n})) (T-\tni)} \bigg[ E[\abs{\xi - \xi^n}^2] + \sum_{i=0}^{N-1} (T+\epsilon^{-1}) \frac{\tau^Y_i}{\hip} + \tau^Z_i  \bigg].
			\end{align*}
		By \eqref{equation---reference--error.bound.on.the.terminal.condition}, $E[\abs{\xi - \xi^n}^2] \le C^X \abs{\pi^n}$, 
		while proposition \ref{proposition---estimation.of.the.discretization.error} guarantees that the last sum is bounded above by 
		$(T+\epsilon^{-1}) C^Y \abs{\pi^n} + C^Z \abs{\pi^n}$.
		Consequently, we only need to set $C_\epsilon = C^X + (T+\epsilon^{-1}) C^Y + C^Z$, so that
			\begin{align*}
			 	\mathrm{Error}_Y(n)^2 \quad \text{and} \quad \mathrm{Error}_Z(n)^2 
			 				\le e^{(\epsilon+\wt M_y^g(\abs{\pi^n})) T} \ C_\epsilon \, \abs{\pi^n} \le e^{(\epsilon+\wt M_y^g(\hmax)) T} \ C_\epsilon \, \frac{T}{n} .
			\end{align*}		
	\end{proof}

	\begin{remark}
		The result above is stated by default for the partition $\pi^n$. 
		However, as discussed after proposition \ref{proposition---stability.of.the.scheme}, the use of the grid $\wt \pi^n$ guarantees that 
		the constant appearing in the exponential term at each step is bounded above by $\wh M_y = M_y/4$. 
		So the global estimate from theorem \ref{theorem---global.error.estimate} would then be
			\begin{align*}
			 	\mathrm{Error}_Y(n)^2 \quad \text{and} \quad \mathrm{Error}_Z(n)^2 
			 				\le e^{(\epsilon+\wh M_y) T} \ C_\epsilon \, \frac{T}{n} .
			\end{align*}		
		For $\epsilon$ sufficiently small (say, smaller than $(-M_y)/8$) this guarantees that the constant $\epsilon+\wh M_y$ is strictly negative, while 
		$\wt M_y^g(\hmax)$ can be big, leading to a huge constant $e^{(\epsilon+\wt M_y^g(\hmax)) T}$.
		We thus obtain with $\wt \pi^n$ a much better non-asymptotic estimate.
		
		Asymptotically however, as $\abs{\pi^n}$ or $\abs{\wt \pi^n} \goesto 0$, it is better to use the more precise error estimate 
		$e^{(\epsilon+\wt M_y^g(\abs{\pi^n})) T} \ C_\epsilon \, \abs{\pi^n}$, since $M_y^g(\abs{\pi^n}) \goesto M_y + 2 L_z^2 \le M_y/2$. 		
	\end{remark}

	\begin{remark}
 		If we are only interested in estimating $\mathrm{Error}_Y(n)^2$, then the proof can be simplified. 
	 	Indeed, the ``iteration'' step really is a simple iteration of the estimate.

		Then, in dimension $1$, instead of the stability estimate from proposition \ref{proposition---stability.of.the.scheme} 
 		one can use the estimate obtained via linearization in proposition \ref{proposition---stability.estimate.one-step.via.linearization.1d} 
 		and end up with the term $\exp\big[ (\epsilon+L_z^2) T \big]$ instead of $\exp\big[(\epsilon+\wt M_y^g(\abs{\pi^n})) T \big]$.
 		This is however not very optimal.
	\end{remark}

\section{Extension to unbounded terminal conditions}		\label{section---extension.to.unbounded.term.cond.}

When the terminal condition $\xi = g(X_T)$ is not bounded, as was assumed so far, one natural solution is to truncate the terminal condition, as was proposed in \cite{LionnetDosReisSzpruch2015}. 
For an arbitrary $L > 0$, let $T^L$ be the projection in the $L^\infty$-norm (i.e. component-wise) in $\R^k$, let $g^L =T^L \circ g$ and $\xi^{n,L} = g^L(X^n_N) = T^L\big( g(X^n_N) \big) = T^L(\xi^n)$.

\paragraph*{}
The ATS truncated scheme 
is defined as follows. 
Take $L_n = L_0 n^{{\alpha}/{(2(m-1))}}$, where $\alpha \in ]0,1]$ and the adapted time-steps grid $\pi^n$ (or $\wt \pi^n$) but computed with $\norm{g}_\infty$ replaced by $\norm{g^{L_n}}_\infty=L_n$.
The scheme produces approximations denoted by $(Y^{n,t}_i,Z^{n,t}_i)_{i = 0 \ldots N}$. We are now interested in the errors 
	\begin{align*}
		\mathrm{Error}_{Y,t}(n)^2 = \max_{i = 0 \ldots N^n} E\big[ \abs{Y_\tni - Y^{n,t}_i}^2 \big]  
		\quad \text{and} \quad 
		\mathrm{Error}_{Z,t}(n)^2 = E\bigg[ \sum_{i=0}^{N^n} \Abs{\wb Z_\tni - Z^{n,t}_i}^2 \hnip \bigg] .
	\end{align*}

	\begin{theorem}
		There exist constants $c$ and $C \ge 0$ such that
			\begin{align*}
			 	\mathrm{Error}_{Y,t}(n)^2 \quad \text{and} \quad \mathrm{Error}_{Z,t}(n)^2 
			 		\le \frac{C}{n} + e^{(\epsilon+\wt M_y^{g^{L_n}}(\abs{\pi^n})) T} \ C_\epsilon \, \abs{\pi^n} 
			 		\le\frac{C}{n} +  e^{(\epsilon+c) T} \ C_\epsilon \, \frac{T}{n} .
			\end{align*}			
	\end{theorem}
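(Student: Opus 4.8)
The plan is to compare the truncated-scheme output $(Y^{n,t}_i,Z^{n,t}_i)$ with the continuous-time solution $(Y,Z)$ through the auxiliary BSDE with truncated terminal condition. Write $(Y^{L_n},Z^{L_n})$ for the solution of the BSDE with the (unchanged) driver $f$ and terminal condition $g^{L_n}(X_T)$: since $T^{L_n}$ is $1$-Lipschitz, $g^{L_n}$ is Lipschitz with the same constant as $g$ and bounded by $L_n$, so all the standing assumptions hold for $g^{L_n}$ and $(Y^{L_n},Z^{L_n})$ enjoys the bounds and path-regularity estimates recalled in Section \ref{section---preliminaries}. I would split, for every $i$,
\begin{align*}
	Y_\tni - Y^{n,t}_i &= \big( Y_\tni - Y^{L_n}_\tni \big) + \big( Y^{L_n}_\tni - Y^{n,t}_i \big) , \\
	\wb Z_\tni - Z^{n,t}_i &= \big( \wb Z_\tni - \wb Z^{L_n}_\tni \big) + \big( \wb Z^{L_n}_\tni - Z^{n,t}_i \big) ,
\end{align*}
with $\wb Z^{L_n}_\tni = E_i\big[ \frac{1}{\hnip}\int_\tni^\tnip Z^{L_n}_u\, du \big]$, and bound each squared error by (a multiple of) the sum of the squares of the two pieces.

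For the \emph{truncation pieces} I would use the standard a priori estimate for the difference of two monotone BSDEs: It\^o's formula applied to $\abs{Y^{L_n}_t-Y_t}^2$, together with \MonY, \LipZ, a Young inequality and Gronwall's lemma, gives a fixed constant $C$ (depending only on $M_y,L_z,T$) such that
\begin{align*}
	&\max_{t\in[0,T]} \bE\big[ \abs{Y_t-Y^{L_n}_t}^2 \big] + \bE\Big[ \int_0^T \abs{Z_u-Z^{L_n}_u}^2\, du \Big] \\
	&\qquad \le C\, \bE\big[ \abs{g(X_T)-g^{L_n}(X_T)}^2 \big] ;
\end{align*}
and, by conditional Jensen, $\bE\big[ \sum_i \abs{\wb Z_\tni-\wb Z^{L_n}_\tni}^2 \hnip \big] \le \bE\big[ \int_0^T \abs{Z_u-Z^{L_n}_u}^2\, du \big]$, so both truncation pieces are dominated by this quantity. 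Since $g$ is Lipschitz and, by \eqref{equation---reference--bounds.on.X.Y.Z}, $X_T$ has finite moments of every order, $\xi=g(X_T)\in L^q$ for every $q$; hence by H\"older and Markov $\bE\big[\abs{g(X_T)-g^{L_n}(X_T)}^2\big] \le \bE\big[ \abs{\xi}^2\, 1_{\{\abs{\xi}>L_n\}} \big] \le C_q\, L_n^{-q}$ for every $q$. With $L_n = L_0\, n^{\alpha/(2(m-1))}$, choosing $q$ so that $q\,\alpha/(2(m-1))\ge 1$ yields a bound $\le C/n$; this is the term $C/n$ of the statement.

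For the \emph{discretisation pieces} I would observe that $(Y^{n,t}_i,Z^{n,t}_i)$ is exactly the output of the scheme \TimeDiscScheme run for the bounded Lipschitz terminal function $g^{L_n}$ on the ATS grid $\pi^n$ (or $\wt\pi^n$) built with $\Norm{g}_\infty$ replaced by $\Norm{g^{L_n}}_\infty = L_n$. Theorem \ref{theorem---global.error.estimate}, applied verbatim with $g$ replaced by $g^{L_n}$, then gives
\begin{align*}
	&\max_i \bE\big[ \abs{Y^{L_n}_\tni-Y^{n,t}_i}^2 \big] \ \text{ and } \ \bE\Big[ \sum_i \abs{\wb Z^{L_n}_\tni-Z^{n,t}_i}^2 \hnip \Big] \\
	&\qquad \le e^{(\epsilon+\wt M_y^{g^{L_n}}(\abs{\pi^n}))\, T}\, C_\epsilon\, \abs{\pi^n} ,
\end{align*}
with $\wt M_y^{g^{L_n}}(h) = M_y + 2L_z^2 + 3(d+1)L_y^2\, h\,(1+2L_n^{2(m-1)})$ as in \eqref{equation---definition.of.wtMg.of.h} and $C_\epsilon = C^X + (T+\epsilon^{-1})C^Y + C^Z$, the constants of Theorem \ref{theorem---global.error.estimate} now evaluated at $g^{L_n}$. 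Adding the two contributions yields the first inequality of the theorem. For the second, the grid always has $\abs{\pi^n}\le T/n$, and since $L_n^{2(m-1)} = L_0^{2(m-1)} n^{\alpha}$ with $\alpha\le 1$ we get $\abs{\pi^n}\, L_n^{2(m-1)} \le L_0^{2(m-1)} T\, n^{\alpha-1} \le L_0^{2(m-1)} T$; hence $\wt M_y^{g^{L_n}}(\abs{\pi^n}) \le M_y + 2L_z^2 + 3(d+1)L_y^2\, T\,(1+2L_0^{2(m-1)}) =: c$ uniformly in $n$, so that $e^{(\epsilon+\wt M_y^{g^{L_n}}(\abs{\pi^n}))T}\, C_\epsilon\, \abs{\pi^n} \le e^{(\epsilon+c)T}\, C_\epsilon\, \frac{T}{n}$.

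The step I expect to be the main obstacle is the bookkeeping in that last paragraph: one must make explicit how $C^Y,C^Z$ of Proposition \ref{proposition---estimation.of.the.discretization.error} (hence $C_\epsilon$) depend on $\Norm{g^{L_n}}_\infty = L_n$ --- they are governed by the $\S^p$- and $\H^p$-norms and the path regularity of $(Y^{L_n},Z^{L_n})$, which scale with powers of $L_n$ --- and then check that this growth, against $\abs{\pi^n}\le T/n$, keeps the product bounded (and $\goesto 0$) as $n\goesto +\infty$. This is precisely where the rate $L_n = L_0\, n^{\alpha/(2(m-1))}$ is needed: the degree-$2(m-1)$ polynomial growth of the driver forces the relevant power of $L_n$, and then $\alpha\le 1$ is exactly the threshold at which $\abs{\pi^n}$ still compensates it. Everything else --- the a priori/Gronwall estimate for $Y-Y^{L_n}$, the moment bound on $\xi$, and the reduction to Theorem \ref{theorem---global.error.estimate} --- is routine.
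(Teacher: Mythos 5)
Your proof follows the same route as the paper's: the same decomposition through the auxiliary BSDE with truncated terminal condition $g^{L_n}(X_T)$, the same a priori stability estimate plus H\"older/Markov tail bound producing the $C/n$ term, and the same application of Theorem \ref{theorem---global.error.estimate} to the truncated problem together with the uniform bound on $\wt M_y^{g^{L_n}}(\abs{\pi^n})$ via $\abs{\pi^n}\,L_n^{2(m-1)} \le L_0^{2(m-1)}\,T\,n^{\alpha-1}$. The dependence of $C_\epsilon$ on $L_n$ that you flag as the main obstacle is in fact harmless: since $\abs{\xi^{L_n}} \le \abs{\xi}$ pointwise, the $\S^p$-, $\H^p$- and path-regularity constants governing $C^Y,C^Z$ for the truncated solution are dominated, uniformly in $n$, by those of the original problem.
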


	\begin{proof}
		Let us denote by $(Y^t_u,Z^t_u)_{u \in [0,T]}$ the solution to the BSDE \eqref{equation---reference--BSDE} with terminal condition $\xi^{L_n} = g^{L_n}(X_T)$.
		The standard stability estimate for BSDEs with monotone drivers (see \cite{Pardoux1999} or \cite{BriandCarmona2000}) ensures that there exists $C \ge 0$ such that
			\begin{align*}
				\max_{i = 0 \ldots N^n} E\big[ \abs{Y_\tni - Y^{t}_\tni}^2 \big] + E\bigg[ \int_0^T \Abs{Z_u - Z^{t}_u}^2 du \bigg] \le C E\Big[ \Abs{\xi - \xi^{L_n}}^2 \Big] .
			\end{align*}
		On the one hand, we note that
			\begin{align*}
				E\bigg[ \sum_{i=0}^{N^n} \Abs{\wb Z_\tni - \wb Z^t_\tni}^2 \hnip \bigg]
					& =  E\Bigg[ \sum_{i=0}^{N^n} \Abs{ E_i\Big[ \frac{1}{\hnip} \int_\tni^\tnip Z_u du \Big] - E_i\Big[ \frac{1}{\hnip} \int_\tni^\tnip Z^t_u du \Big] }^2 \hnip \Bigg]				\\
					&\le E\Bigg[ \sum_{i=0}^{N^n} E_i\Big[ \hnip \int_\tni^\tnip \abs{Z_u-Z^t_u}^2 du \Big]  \frac{1}{\hnip} \Bigg]																							\\
					& = E\bigg[ \int_0^T \Abs{Z_u - Z^{t}_u}^2 du \bigg] .
			\end{align*}
		On the other hand, using a Markov inequality with $p = \frac{4(m-1)}{\alpha}$, 
			\begin{align*}
				E\Big[ \Abs{\xi - \xi^{L_n}}^2 \Big] 
					&= E\Big[ \Abs{\xi - \xi^{L_n}}^2 1_{\{\abs{\xi}_\infty > L_n\}} \Big]
						\le E\Big[ \abs{\xi}^2 1_{\{\abs{\xi} > L_n\}} \Big]																																														\\
					&\le E\big[ \abs{\xi}^4 \big]^\half P\big( \abs{\xi}^p > (L_n)^p \big)^\half
						\le E\big[ \abs{\xi}^4 \big]^\half E\big[ \abs{\xi}^p \big]^\half (L_n)^{-p/2}																																			\\
					&\le c_4 \ c_p \ (L_0)^{-p/2} \ n^{- \frac{\alpha}{2(m-1)} \frac{p}{2}}
						= C \frac{1}{n} ,
			\end{align*}
		where we used the linear growth of $g$ (stemming from it being Lipschitz) and the bound \eqref{equation---reference--bounds.on.X.Y.Z} for $X_T$ to bound the moments of $\xi=g(X_T)$.
		So we have established that
			\begin{align*}
				\max_{i = 0 \ldots N^n} E\big[ \abs{Y_\tni - Y^{t}_\tni}^2 \big] + E\bigg[ \sum_{i=0}^{N^n} \Abs{\wb Z_\tni - \wb Z^t_\tni}^2 \hnip \bigg] \le \frac{C}{n} .
			\end{align*}
		
		Meanwhile, notice that in the proof of theorem \ref{theorem---global.error.estimate}, we used the fact that $E[\abs{\xi-\xi^n}^2] \le C^X \abs{\pi^n}$.
		When $g$ is replaced by $g^{L_n}$, we have, since $T^{L_n}$ is $1$-Lipschitz,
			\begin{align*}
				E[\abs{\xi^{L_n}-\xi^{n,L_n}}^2] &\le E\Big[ \Abs{T^{L_n}\big(g(X_T)\big) - T^{L_n}\big(g(X^n_N)\big)}^2 \Big]					\\
					&\le 1^2 \ E\big[ \Abs{g(X_T) - g(X^n_N)}^2 \big]					
						\le C^X \abs{\pi^n} .
			\end{align*}
		Theorem \ref{theorem---global.error.estimate} thus applies to the numerical approximation of BSDE \eqref{equation---reference--BSDE} with terminal condition $\xi^{L_n}=T^{L_n}(\xi)$ 
		and we have, for any $\epsilon > 0$,
			\begin{align*}
				\max_{i = 0 \ldots N^n} E\big[ \abs{Y^{t}_\tni - Y^{n,t}_i}^2 \big] 
				\quad \text{and} \quad  
				E\bigg[ \sum_{i=0}^{N^n} \Abs{\wb Z^t_\tni - Z^{n,t}_i}^2 \hnip \bigg] 
						\le e^{(\epsilon+\wt M_y^{g^{L_n}}(\abs{\pi^n})) T} \ C_\epsilon \, \abs{\pi^n} .
			\end{align*}
		Now, we have
			\begin{align*}
				\wt M_y^{g^{L_n}}(\abs{\pi^n}) 
					&= M_y + 2 L_z^2 + 3 (d+1) L_y^2 \hnip (1 + 2 \ \norm{g^{L_n}}_\infty^{2(m-1)})																\\
						& =  M_y + 2 L_z^2 + 3 (d+1) L_y^2 \hnip (1 + 2 \ (L_n)^{2(m-1)})																					\\
						& =  M_y + 2 L_z^2 + 3 (d+1) L_y^2 \hnip (1 + 2 \ (L_0)^{2(m-1)} \ n^{\frac{\alpha}{2(m-1)} 2(m-1)} )								\\
						&\le M_y + 2 L_z^2 + 3 (d+1) L_y^2 T (n^{-1} + 2 \ (L_0)^{2(m-1)} \ n^{\alpha-1} )	.
			\end{align*}
		This guarantees the non-explosion of $(\epsilon+\wt M_y^{g^{L_n}}(\abs{\pi^n})) T$ as $n \goesto +\infty$. 
		In fact, if $\alpha < 1$, we obtain again $\wt M_y^{g^{L_n}}(\abs{\pi^n}) \goesto M_y + 2L_z^2 \le M_y/2$.
			
		Finally, using the triangle inequality in the form $(a-c)^2 \le 2 (a-b)^2 + 2 (b-c)^2$ completes the proof.
	\end{proof}

\section{Extension to overall-monotone drivers}			\label{section---extensions.to.overall-monotone.drivers}

This section is devoted to a generalization on the type of driver allowed in the scalar case, and relaxes the assumption that \MonY is satisfied, everywhere, with a $M_y < 0$.
Indeed, there are two important limitations with this which can occur, for instance, with drivers of FitzHugh--Nagumo type : $f(y,z) = c y^3 + b y^2 + a y$.
In the simple case $f(y) = -y^3$, we see that the derivative is everywhere negative, but null at the origin. So we can only have $M_y=0$ in \MonY. Yet the driver is still strictly decreasing on $\R$. 
In the case $f(y) = y - y^3$, we see that the derivative has a maximum of $1$, so we can only have $M_y = 1$ in \MonY.
However, in both case, when $y$  is far enough from the origin, we do have $y f(y) \le M_y \abs{y}^2$ for some $M_y < 0$ : large values of $y$ result in a drift pointing toward the origin.
This guarantees a certain size-stability for the continuous-time dynamics.
We explain in this section how to adapt the ATS scheme to these situations and preserve the numerical stability.

\paragraph*{}
So in this section, \RegY, \LipZ and \GrowthY are still assumed but not \MonY.
We come back to the setting of a bounded $\xi$, in the scalar case $k=1$.
The condition \MonY is replaced by a generalized monotone growth assumption: 
we assume that $f(0,0)=0$ and that for some $K > 0$ and $M_y < 0$,
	\begin{itemize}
		\item [] \GMonGr  for all $y$ : $\abs{y} > K$, we have ${y}{f(y,0)} \le M_y \abs{y}^2$.
	\end{itemize}
Remark that the assumption $f(0,0)=0$ is more for convenience than it is a restriction. 
First, \GMonGr implies that $\lim_{+\infty} f(y,0)=-\infty$ and $\lim_{-\infty} f(y,0)=+\infty$. 
Then, if $f(y_0,0)=0$, one can do the change of variable $(\overrightarrow{Y},\overrightarrow{Z}):=(Y-y_0,Z)$ and have 
$\overrightarrow{f}(\overrightarrow{y},\overrightarrow{z}):=f(\overrightarrow{y}+y_0,\overrightarrow{z})$ satisfy $\overrightarrow{f}(0,0)=0$.

\paragraph*{}
Let us first note that it can be seen from proposition 2.2 and theorem 2.2 in \cite{Pardoux1999} that the BSDE \eqref{equation---reference--BSDE} is well-posed in this setting, 
by remarking that \LipZ, \GMonGr and \RegY imply that for all $y,z$ we have, denoting $b = M_y + L_y (1+K^{m-1}) + L_z^2$,
	\begin{align*}
		y \, f(y,z) = y \, f(y,0) + y \, \big( f(y,z)-f(y,0) \big)
			\le b \abs{y}^2 + \frac{1}{4} \abs{z}^2,
	\end{align*}
which is the only estimate required.
Now, we have the following finer estimate on the size of the $Y$-component of the solution. 

	\begin{proposition}
		We have for all $t \in [0,T]$
			\begin{align*}
				\abs{Y_t} \le \max\big( e^{M_y (T-t)} \Norm{\xi}_\infty , K \big).
			\end{align*}
	\end{proposition}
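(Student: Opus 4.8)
The plan is to exhibit the deterministic function
$\theta_t := \max\big( e^{M_y(T-t)}\Norm{\xi}_\infty , K \big)$
as an almost-sure upper bound for $\abs{Y_t}$, by applying Itô's formula to $v_t := \big( (\abs{Y_t}^2 - \theta_t^2)^+ \big)^2$ and showing that $\bE[v_t]=0$ for every $t$. Two elementary observations make this work. First, $\theta$ is Lipschitz (being the maximum of a bounded Lipschitz function and a constant), it satisfies $\theta_T = \max(\Norm{\xi}_\infty , K) \ge \abs{\xi}$, and wherever it is differentiable one has $\theta_t \dot\theta_t \le -M_y \theta_t^2$ --- with equality on the set where $\theta_t = e^{M_y(T-t)}\Norm{\xi}_\infty$, and $\theta_t \dot\theta_t = 0 \le -M_y K^2$ on the set where $\theta_t = K$ (here $M_y<0$ is used); the kink is harmless since $\theta^2$ is still absolutely continuous. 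Second, on the random set $\{\abs{Y_t} > \theta_t\}$ one automatically has $\abs{Y_t} > K$, so that \GMonGr becomes available exactly where it is needed.

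Then I would run the Itô computation. Since $g(x) = (x^+)^2$ is $C^1$ with $g'(x) = 2x^+$ and $g''(x) = 2\cdot 1_{\{x>0\}}$ defined a.e. and bounded, Itô's formula applies to $v_t = g(\abs{Y_t}^2 - \theta_t^2)$, using $d(\abs{Y_t}^2) = \big( -2 Y_t f(Y_t,Z_t) + \abs{Z_t}^2 \big)dt + 2 Y_t Z_t\, dW_t$ and $d(\theta_t^2) = 2\theta_t \dot\theta_t\, dt$. Writing $X_t = \abs{Y_t}^2 - \theta_t^2$ and integrating backward from $v_T = 0$, one obtains
	\begin{align*}
		v_t = \int_t^T \Big[ 4 X_s^+ Y_s f(Y_s,Z_s) - 2 X_s^+ \abs{Z_s}^2 + 4 X_s^+ \theta_s \dot\theta_s - 4\cdot 1_{\{X_s>0\}} \abs{Y_s}^2 \abs{Z_s}^2 \Big] ds - \int_t^T 4 X_s^+ Y_s Z_s\, dW_s .
	\end{align*}
On $\{X_s > 0\}$, where $\abs{Y_s} > \theta_s \ge K$, I would split $Y_s f(Y_s,Z_s) = Y_s f(Y_s,0) + Y_s\big(f(Y_s,Z_s) - f(Y_s,0)\big)$, bound the first piece by $M_y \abs{Y_s}^2$ via \GMonGr and the second by $L_z \abs{Y_s}\abs{Z_s}$ via \LipZ, and then use a Young inequality pairing $\abs{Y_s}\abs{Z_s}$ with $X_s$ so that the resulting $\abs{Y_s}^2\abs{Z_s}^2$ term is absorbed by $-4\cdot 1_{\{X_s>0\}}\abs{Y_s}^2\abs{Z_s}^2$, while $-2X_s^+\abs{Z_s}^2 \le 0$ is simply discarded. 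Combined with $\theta_s\dot\theta_s \le -M_y\theta_s^2$ and the identity $\abs{Y_s}^2 - \theta_s^2 = X_s = X_s^+$ on this set, the bracketed drift is then bounded by $C\, (X_s^+)^2 = C\, v_s$ for a finite constant $C$ (something of the form $4M_y + 2L_z^2$). On $\{X_s \le 0\}$ every term vanishes, so $v_t \le C \int_t^T v_s\, ds - \int_t^T 4 X_s^+ Y_s Z_s\, dW_s$.

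Finally I would take expectations: a standard localization argument (legitimate because the bounded terminal condition and polynomial growth guarantee that $(Y,Z)$ has moments of all orders) turns the stochastic integral into a true martingale of mean zero, giving $\bE[v_t] \le C \int_t^T \bE[v_s]\, ds$ with $\bE[v_T] = 0$; Gronwall's lemma (the sign of $C$ being irrelevant here) forces $\bE[v_t] = 0$, hence $v_t = 0$ a.s., i.e. $\abs{Y_t} \le \theta_t$, as claimed. The main obstacle is the bookkeeping in the middle step: three distinct $Z$-dependent contributions appear --- the $+\abs{Z}^2$ coming from $d\langle Y\rangle$, the $\abs{Y}\abs{Z}$ coming from \LipZ, and the $-\abs{Y}^2\abs{Z}^2$ coming from $d\langle X\rangle$ --- and the Young split must be chosen so that they recombine into a non-positive remainder; the naive split (pairing $\abs{Y}\abs{Z}$ with a constant) leaves an uncontrolled $\abs{Y_s}^2 X_s^+$, equivalently a $\sqrt{v_s}$, term, and the argument collapses. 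That the monotonicity is only required on $\{\abs{y}>K\}$ is precisely what dictates using the positive-part truncation $((\abs{Y}^2-\theta^2)^+)^2$ rather than a global linearization of the driver.
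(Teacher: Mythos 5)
Your proof is correct, but it takes a genuinely different route from the paper. The paper invokes the comparison theorem for monotone BSDEs (Pardoux's Theorem 2.4) to dominate $Y$ by the solution $y^+$ of the BSDE with deterministic terminal condition $\Norm{\xi}_\infty$; since the data are deterministic, $z^+=0$ and $y^+$ solves the backward ODE $dy^+_t=-f(y^+_t,0)\,dt$, whose trajectory is then analyzed by hand: the region $\{y\le K\}$ is absorbing backward in time because \GMonGr forces $f(y,0)<0$ for $y>K$, and while $y^+_t>K$ the linearization $\beta_t=f(y^+_t,0)/y^+_t\le M_y$ yields the exponential decay. Your argument instead applies It\^o's formula to the penalization $v_t=\big((\abs{Y_t}^2-\theta_t^2)^+\big)^2$ and closes a Gronwall estimate; the computation checks out (the Young split $4L_z X_s^+\abs{Y_s}\abs{Z_s}\le 4\abs{Y_s}^2\abs{Z_s}^2+L_z^2(X_s^+)^2$ does absorb the quadratic-variation contribution exactly, the remaining drift is $(4M_y+L_z^2)v_s$, and $-2X_s^+\abs{Z_s}^2\le 0$ can indeed be discarded). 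What each approach buys: the paper's proof is shorter and reduces everything to a scalar ODE, but it leans on the comparison theorem, which is intrinsically one-dimensional and must be run twice (upper and lower bound separately). Your proof is self-contained apart from It\^o and Gronwall, controls $\abs{Y_t}$ in one pass, and would extend verbatim to $k>1$ if \GMonGr were stated as $\scalar{y}{f(y,0)}\le M_y\abs{y}^2$ for $\abs{y}>K$ --- though in this section the paper works only with $k=1$, so that extra generality is not exploited. One small point worth making explicit if you write this up: the martingale property of $\int 4X_s^+Y_sZ_s\,dW_s$ follows directly from $(Y,Z)\in\S^p\times\H^p$ for all $p$ via Burkholder--Davis--Gundy, so the localization can be dispensed with entirely.
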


This result is only about continuous-time BSDEs. However, since it does not appear to be present in the literature on continuous-time BSDEs, we provide a short proof for it.

	\begin{proof}
		We only prove the upper bound, the lower bound $Y_t \ge \min\big( -e^{M_y (T-t)} \Norm{\xi}_\infty , -K \big)$ being obtained similarly.
		$(Y,Z)$ solves the BSDE with terminal condition $\xi$ and driver $f$. 
		Denoting by $(y^+,z^+)$ the solution to the BSDE with terminal condition $\Norm{\xi}_\infty$ and driver $f$, 
		since $\xi \le \Norm{\xi}_\infty$, we have by the comparison theorem 2.4 in \cite{Pardoux1999} that for all $t \in [0,T]$, a.s., $Y_t \le y^+_t$.
		Since $f$ and $\Norm{\xi}_\infty$ are deterministic, $z^+=0$ and $y^+$ in fact solves the ODE
			\begin{align*}
				dy^+_t = - \varphi(y^+_t) dt 
				\qquad \text{with} \qquad  
				y^+_T = \Norm{\xi}_\infty ,
			\end{align*} 
		where we denote by $\varphi$ the function $f(\cdot,0)$. 

		Since \GMonGr implies that $\varphi(y) < 0$ for all $y > K$, we know that if there exists $t \in [0,T]$ such that $y^+_t \le K$, then we have for all $s \le t$, $y^+_s \le K$.
		We then have two cases. 
		If $y^+_T = \Norm{\xi}_\infty \le K$, then the sought bound holds because $y^+_t \le K$ for all $t$.
		So let us consider the case $y^+_T = \Norm{\xi}_\infty > K$.
		Then, denote by $\beta_t = 1_{\{ y^+_t \neq 0 \}} \frac{ \varphi(y^+_t) }{y^+_t}$. We have, for all $t \in [0,T]$,
		$y^+_t = \exp\big({ \int_t^T \beta_u du}\big) \Norm{\xi}_\infty$.
		Since $\beta_u \le M_y < 0$ when $y^+_u > K$ by \GMonGr, we have, as long as $y^+_t > K$ holds, $y^+_t \le e^{M_y (T-t)} \Norm{\xi}_\infty$.
		Thus, for all $t \in [0,T]$, $Y_t \le y^+_t \le \max\big( e^{M_y (T-t)} \Norm{\xi}_\infty , K \big)$.		
	\end{proof}

We now want an adapted time-steps partition so that the scheme preserves this bound.
The idea is to rely on the comparison property. 
To this end, instead of the standard grid $\pi^n$ given by \eqref{equation---reference--definition.of.hnip.1dim},
we use the time-grid $\pi^{n,+} = (\tni)_{i= 0 \ldots N^{n,+}}$ 
defined like $\pi^n$ but with time-steps closer to \eqref{equation---reference--definition.of.hnip.1dim.with.comparison}. 
More precisely, they are defined by
	\begin{align}		\label{equation---reference--definition.of.hnip.1dim.overall-monotone.case}
		h^{n,+}_\ip := \min \left\{ \frac{1}{3L_y \big( 1 + 2 \max\big( e^{(m-1) M_y (T-t^n_\ip)} \Norm{\xi}_\infty^{m-1}, K^{m-1} \big) \big)} 
					\ , \ h_0(L_z) \ , \ \hmax \ , \ \tnip \ , \ \frac{T}{n} \right\} .
	\end{align}
The approximations produced are still denoted by $(Y^n_i,Z^n_i)_{i = 0 \ldots N}$.

\paragraph*{}
The main proposition is the following.

	\begin{proposition}
		We have, for all $i \in \{0, \ldots, N^{n,+} \}$, 
			\begin{align*}
				\abs{Y^n_i} \le \max \big( e^{ M_y (T-\tni)} \Norm{\xi}_\infty , K \big) .
			\end{align*}
	\end{proposition}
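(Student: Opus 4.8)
The plan is to squeeze $Y^n_i$ between two deterministic reference sequences and estimate those by hand, using the comparison property established just after Proposition~\ref{proposition---stability.estimate.one-step.via.linearization.1d}. Let $(y^+_i)$ and $(y^-_i)$ be the outputs of the scheme \TimeDiscScheme on the grid $\pi^{n,+}$ run with the \emph{same} driver $f$ but from the constant terminal values $y^+_N = \Norm{\xi}_\infty$ and $y^-_N = -\Norm{\xi}_\infty$ (read as $\pm\Norm{\xi^N}_\infty$ if one wants to match $Y^n_N = \xi^N$ exactly). Since these terminal values are deterministic and $E_i[\Hip] = 0$, a straightforward backward induction shows that the corresponding $Z$-components vanish and that $y^\pm$ are deterministic, obeying the explicit Euler recursion $y^\pm_i = y^\pm_\ip + \varphi(y^\pm_\ip)\, h^{n,+}_\ip$ with $\varphi := f(\cdot,0)$; these are precisely the time discretisations of the ODE $\dot y = -\varphi(y)$ appearing in the proof of the preceding proposition. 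Set $b_i := \max\big( e^{M_y (T-\tni)} \Norm{\xi}_\infty,\, K \big)$, so that $b_i \le b_\ip$ and $b_N = \max(\Norm{\xi}_\infty,K)$.

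I would then establish, by backward induction on $i$, the joint statement that $y^-_i \le Y^n_i \le y^+_i$ and $\abs{y^+_i}, \abs{y^-_i} \le b_i$; the asserted bound $\abs{Y^n_i}\le b_i$ is then immediate. The case $i=N$ is clear. For the step from $i+1$ to $i$, the induction hypothesis yields $\Norm{y^+_\ip}_\infty^{m-1} + \Norm{Y^n_\ip}_\infty^{m-1} \le 2\max\big( e^{(m-1)M_y(T-\tnip)}\Norm{\xi}_\infty^{m-1}, K^{m-1}\big)$, and comparing this with the definition \eqref{equation---reference--definition.of.hnip.1dim.overall-monotone.case} of $h^{n,+}_\ip$ --- here is where the factor $2$ is needed, to leave room for two inputs --- one checks $h^{n,+}_\ip \le h^0_\ip(\Norm{y^+_\ip}_\infty,\Norm{Y^n_\ip}_\infty)$, so the comparison corollary applies (with $\wh f = f$) and, using $y^-_\ip \le Y^n_\ip \le y^+_\ip$, gives $y^-_i \le Y^n_i \le y^+_i$.

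It then remains to bound the deterministic iterates: $\abs{y^+_i}\le b_i$ (the bound on $y^-$ follows by the symmetric argument, or by applying the same reasoning to the admissible driver $\tilde f(y,z):=-f(-y,-z)$). Write $a:=y^+_\ip$, so $\abs a \le b_\ip$. If $a > K$, then \GMonGr gives $\varphi(a)\le M_y a$, while \RegY together with the choice of $h^{n,+}_\ip$ gives $\abs{\varphi(a)}\,h^{n,+}_\ip \le L_y(1+\abs a^{m-1})\abs a\,h^{n,+}_\ip \le \frac{1}{3}\abs a$; hence $0 < \frac{2}{3}a \le y^+_i = a + \varphi(a)h^{n,+}_\ip \le a\,e^{M_y h^{n,+}_\ip}$, and since $a>K$ forces $b_\ip = e^{M_y(T-\tnip)}\Norm{\xi}_\infty$ we conclude $y^+_i \le e^{M_y(T-\tni)}\Norm{\xi}_\infty \le b_i$. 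The case $a<-K$ is symmetric. In the remaining case $\abs a \le K$, \GMonGr says nothing, so instead I would use that $\varphi$ is continuous (as $f$ is locally Lipschitz in $y$) and negative on $(K,\infty)$, hence $\varphi(K)\le 0$, and symmetrically $\varphi(-K)\ge 0$; anchoring \RegY at $K$ then gives $\varphi(a) \le \varphi(a)-\varphi(K) \le L_y(1+2K^{m-1})(K-a)$, so $\varphi(a)h^{n,+}_\ip \le \frac{1}{3}(K-a)$ and $y^+_i = a+\varphi(a)h^{n,+}_\ip \le \frac{2}{3}a+\frac{1}{3}K \le K$; anchoring at $-K$ gives $y^+_i \ge -K$, so $\abs{y^+_i}\le K\le b_i$ and the induction closes.

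The one real obstacle is this last case $\abs a \le K$: a naive estimate only gives $\abs{y^+_i}\le \frac{4}{3}\abs a$, which can exceed $b_i$ when $b_i=K$, so, unlike for the ODE, the band $\{\abs y\le K\}$ is not obviously backward invariant at the discrete level. The resolution is the observation that $\varphi(\pm K)$ has the favourable sign --- forced by \GMonGr and continuity of $f(\cdot,0)$ --- which lets one replace the estimate centred at $0$ by one anchored at $\pm K$; combined with the $K^{m-1}$ term and the factor $2$ in \eqref{equation---reference--definition.of.hnip.1dim.overall-monotone.case}, this makes each Euler step a contraction towards $[-K,K]$. A minor bookkeeping point is the harmless discrepancy between $\Norm{\xi}_\infty$ and $\Norm{\xi^N}_\infty$ (both $\le\Norm{g}_\infty$), which is the only place where the statement is phrased a touch loosely.
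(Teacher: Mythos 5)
Your proof is correct, but it diverges from the paper's in one structural choice that costs you an extra case. The paper also proceeds by backward induction and one-step comparison via Proposition \ref{proposition---stability.estimate.one-step.via.linearization.1d}, but at each step it compares $Y^n_\ip$ against the \emph{constant} $y^+_\ip = \max\big(e^{M_y(T-\tnip)}\Norm{\xi}_\infty, K\big)$ --- i.e.\ it re-anchors the comparison input at the target bound $b_\ip$ at every step, rather than propagating a genuine deterministic Euler iterate from $\pm\Norm{\xi}_\infty$ as you do. Since $b_\ip \ge K$ always, the paper only ever has to handle the two cases $y^+_\ip > K$ (where \GMonGr gives the $e^{M_y h}$ contraction) and $y^+_\ip = K$ (where $\varphi(K)<0$ gives $\wh Y_i < K$ for free), and the interior case never arises. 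Your running iterates $y^\pm_i$ can enter $(-K,K)$, which is exactly the obstacle you identify, and your resolution --- anchoring \RegY at $\pm K$ and using the sign of $\varphi(\pm K)$ forced by \GMonGr and continuity to show each Euler step maps $[-K,K]$ into itself --- is a correct and genuinely nontrivial supplement; I checked that the factor $2$ in \eqref{equation---reference--definition.of.hnip.1dim.overall-monotone.case} and the $K^{m-1}$ term in the max are precisely what make $\varphi(a)h^{n,+}_\ip \le \tfrac13(K-a)$ go through. What your route buys is a true discrete sub/supersolution sandwich $y^-_i \le Y^n_i \le y^+_i$ along the whole grid, which is slightly more informative; what the paper's route buys is brevity, since re-anchoring at $b_\ip$ makes the deterministic estimate a one-liner per case. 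Your remark about $\Norm{\xi}_\infty$ versus $\Norm{\xi^N}_\infty$ matches a genuine (harmless) looseness in the paper's statement.
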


Consequently, the scheme is indeed numerically stable, as desired.

	\begin{proof}
		First off, notice that  
		the first part of proposition \ref{proposition---stability.estimate.one-step.via.linearization.1d} (the fact that one can write $\delta Y_i = E[\delta Y_\ip \E_\ip]$) 
		still holds in this generalized setting, since its proof only relies on \RegY and \LipZ, and does not use \MonY.
		We will prove the sought bound by backward induction, relying one the one-step comparison property that follows from \ref{proposition---stability.estimate.one-step.via.linearization.1d}.
		
		The bound is clearly true for $i=N^{n,+}$. Let us now assume it holds for $i+1$, for $i \in \{0, \ldots, N^{n,+}-1 \}$.
		Define the constant $y^+_\ip = \max \big( e^{M_y (T-\tnip)} \Norm{\xi}_\infty , K \big)$.
		We see when taking the power $m-1$ that
			\begin{align*}
				\Norm{Y^n_\ip}_\infty^{m-1} 
				\quad \text{and} \quad 
				\Norm{y^+_\ip}_\infty^{m-1} = (y^+_\ip)^{m-1}
				\le \max \big( e^{ (m-1) M_y (T-\tnip)} \Norm{\xi}_\infty^{m-1} , K^{m-1} \big) ,
			\end{align*}
		so we have
			\begin{align*}
				h^{n,+}_\ip 
					&= \min \left\{ \frac{1}{3L_y \big( 1 + 2 \max\big( e^{(m-1) M_y (T-t^n_\ip)} \Norm{\xi}_\infty^{m-1}, K \big) \big)} 
						\ , \ h_0(L_z) \ , \ \hmax \ , \ \tnip \ , \ \frac{T}{n} \right\}																													\\
					&\le \min \left\{ \frac{1}{3L_y \big( 1 + \Norm{Y^n_\ip}_\infty^{m-1} + \Norm{y^+_\ip}_\infty^{m-1} \big)} 
						\ , \ h_0(L_z) \ , \ \hmax \ , \ \tnip \ , \ \frac{T}{n} \right\}																													\\
					& = h^0_\ip(,\Norm{Y^n_\ip}_\infty,\Norm{y^+_\ip}_\infty) ,
			\end{align*}
		where $h^0_\ip(,\Norm{Y^n_\ip}_\infty,\Norm{y^+_\ip}_\infty)$ is defined in proposition \ref{proposition---stability.estimate.one-step.via.linearization.1d}.
		Then, applying said proposition \ref{proposition---stability.estimate.one-step.via.linearization.1d} 
		with $Y_\ip = Y^n_\ip$ and $\wh Y_\ip = y^+_\ip$ (which is deterministic),
		and denoting by $(\wh Y_i,\wh Z_i)$ the output of scheme \eqref{equation---reference--ATS.scheme} for $\wh Y_\ip$,
		it follows from $Y^n_\ip \le y^+_\ip$ that $Y^n_i \le \wh Y_i$.
		It remains to prove that $\wh Y_i \le \max \big( e^{ M_y (T-\tni)} \Norm{\xi}_\infty , K \big)$ in order to complete the proof.

		Let us remark that $\wh Z_i = E_i[y^+_\ip \Hip^*] = 0$ since $y^+_\ip$ is constant and $E_i[\Hip] = 0$.
		So, denoting by $\varphi(\cdot) = f(\cdot,0)$, we have $\wh Y_i = y^+_\ip + \varphi(y^+_\ip) h^{n,+}_\ip$.
		The estimate for $\wh Y_i$ then depends on the size (or location) of $y^+_\ip$.
		
		\emph{Case 1 : $y^+_\ip > K$.} So in fact $y^+_\ip = e^{M_y (T-\tnip)} \Norm{\xi}_\infty > K$.
		Then, setting $\beta_\ip = \varphi(y^+_\ip)/y^+_\ip$, we have $\beta_\ip \le M_y$ by \GMonGr and therefore
			\begin{align*}
				\wh Y_i = \big( 1 + \beta_\ip h^{n,+}_\ip \big) y^+_\ip \le e^{M_y h^{n,+}_\ip} y^+_\ip = e^{M_y h^{n,+}_\ip} e^{M_y (T-\tnip)} \Norm{\xi}_\infty =  e^{M_y (T-\tni)} \Norm{\xi}_\infty,
			\end{align*}
		which proves the desired bound : $\wh Y_i \le \max \big( e^{ M_y (T-\tni)} \Norm{\xi}_\infty , K \big)$.
		
		\emph{Case 2 : $y^+_\ip = K$.} Then, we have $\wh Y_i = K + \varphi(K) h^{n,+}_\ip$.
		But \GMonGr implies, when taking the limit $y \goesto K$ from above, that $\varphi(K) < 0$. 
		We thus have the desired bound since $\wh Y_i < K = \max \big( e^{ M_y (T-\tni)} \Norm{\xi}_\infty , K \big)$.
		
		The lower bound $Y^n_i \ge -\max  \big( e^{ M_y (T-\tni)} \Norm{\xi}_\infty , K \big)$ is obtained in a similar fashion, 
		applying proposition \ref{proposition---stability.estimate.one-step.via.linearization.1d} 
		with inputs $Y_\ip = Y^n_\ip$ and $\wh Y_\ip = y^-_\ip := -y^+_\ip$.
	\end{proof}

\section{Numerical simulations}			\label{section---numerical.simulations}

In this section, we illustrate numerically the results of sections \ref{section---numerical.stability} and \ref{section---convergence.of.the.scheme}.

\paragraph*{}
To implement the Adapted Time-Steps scheme, one also needs to numerically approximate the conditional expectations appearing in 
\eqref{equation---reference--X.scheme-time.discretization.only}-\eqref{equation---reference--ATS.scheme}. 
To this end, we use a quantization method (see \cite{ChassagneuxRichou2016} and \cite{PagesSagna2015} for details). 
For given strictly positive $(\eta_i)_{i=0 \ldots N}$ and integers $(m_i)_{i = 0 \ldots N}$, we consider the spatial grids
	\begin{align*}
		\Gamma_i = x_0 + \Big\{  x \in \eta_i \Z^d \ \Big|\ \abs{ x^c} \le m_i \eta_i, \text{ for all } c  = 1 \ldots d \Big\} .
	\end{align*}
We denote by $\Pi_i$ the projection from $\R^d$ on the grid $\Gamma_i$ in the distance associated with the $L^\infty$-norm (i.e. component-wise projection).
The forward process is then approximated as follows. It is initialized with $\wb{X}_0 = x_0$ and then, for $i=0 \ldots N-1$,
	\begin{align}		\label{equation---reference--X.scheme--fully.discretized}
		\wb X_\ip = \Pi_i \Big( \wb X_i + \mu(\ti,\wb X_i) \hip + \sigma(\ti,\wb X_i) {\Delta W}_\tip \Big).
	\end{align}
For the backward processes, we initialize the scheme with $\wb Y_N = \xi^N = g(\wb X_N)$, $\wb Z_N = 0$ and then, for $i=N-1$ down to $0$, $\wb Y_\ip$ is used to compute $(\wb Y_i,\wb Z_i)$ as
	\begin{align}		\label{equation---reference--ATS.scheme--fully.discretized} 
		\left\{\begin{aligned}
			\wb Y_i &= E_i\Big[ \wb Y_\ip + f(\wb Y_\ip,\wb Z_i) \hip \Big]											\\
			\wb Z_i &= E_i\Big[ \wb Y_\ip \Hip^* \Big] .
		\end{aligned}\right.
	\end{align}

$(\wb Y_i,\wb Z_i)_{i=0 \ldots N}$ is a Markov process and, for all $i \in \{0, \ldots , N\}$, there exist functions $y_i$ and $z_i$, from $\Gamma_i$ to $\R$ and $\R^{1 \times d}$ respectively, such that $\wb Y_i = y_i(\wb X_i)$ and $\wb Z_i = z_i(\wb X_i)$.
We denote by $x^i_k = x_0 + k \, \eta_i$ and by $x^\ip_l = x_0 + l \, \eta_\ip$, and we denote the transition probabilities $p^i_{k,l} = P\big( \wb X_\ip = x^\ip_l \big| \wb X_i = x^i_k \big)$, which can be obtained, for instance, by Monte Carlo estimation. We have
	\begin{align*}
		\left\{\begin{aligned}
			y_i(x^i_k) &= \sum_{l \,:\, x^\ip_l \in \Gamma_\ip} p^i_{k,l} \ \Big( y_\ip(x^\ip_l) + f(y_\ip(x^\ip_l),z_i(x^i_k)) \, \hip \Big)			\\
			z_i(x^i_k) &= \sum_{l \,:\, x^\ip_l \in \Gamma_\ip} p^i_{k,l} \ y_\ip(x^\ip_l) \ \frac{ T^{R(\hip)} \big(  ( x^\ip_l - x^i_k - \mu(\ti,x^i_k) \hip ) / \sigma(\ti,x^i_k) \big) }{\hip} .
		\end{aligned}\right.
	\end{align*}

\subsection{Numerical stability}

We look at the example where $f(y) = -y-y^3$ and $g(x) = \min(\abs{x}^2,c)$ for some constant $c$. 
This driver $f$ satisfies \MonY, \RegY, \LipZ with $m=3$, $M_y=-1$, $L_y = 3/2$ and $L_z=0$.
For the forward process, we take a standard Brownian motion, $X=W$ (so $x_0=0$, $\mu=0$ and $\sigma=1$). 

Having computed the partition $\pi^{n,+}$ with time-steps $\hip$ given by \eqref{equation---reference--definition.of.hnip.1dim.with.comparison}, we define for all $i$ the upper bounds $x^i_{\max} = 0 + 0 \, . \, \ti + 1 \, . \,  q \, \sqrt{\ti}$, and lower bounds $x^i_{\min} = -x^i_{\max}$, with $q=3$ (so that $P(\abs{G}>q) \approx 0$, whenever $G \sim \cN(0,1)$). We use $m_i=i$ and $\eta_i = \frac{x^i_{\max}-x^i_{\min}}{2m_i}$.

\paragraph*{}
In order to observe the preservation of size bounds, we plot for each $\ti$ the maximum and minimum of $\wb Y_i$. 

Figure \ref{Figure---numerical.stability} shows the results for the implicit and explicit schemes with regular time-grid, as well as for the ATS grid, when $n=15$, for the cases $c=3.6$, $c=4$ and $c=6$.
For the ATS scheme, we report the number $N^n$ of time-steps in the computed partition, 
its excess cardinal, $N^n/n$,
and its non-uniformity, defined as the ratio of the biggest time-step over the smallest time-step.

\begin{table}[htb]	
	\centering 
	\begin{tabular}{|c|c|c|c|}
		\hline 
		c & 3.6 & 4 & 6 \\
		\hline 
		$N^n$ & 15 & 17 & 27 \\ 
		\hline 
		$N^n/n$ & 1 & 1.13 & 1.8 \\
		\hline 
		non-uniformity & 1.4 & 1.7 & 3.7 \\
		\hline 
	\end{tabular}
	\caption{ Characteristics of the partition $\pi^{n,+}$. } 
\end{table}

\begin{figure}[h]
	\centering
	\subfigure [Case c=3.6]
	{	\includegraphics[scale=0.363]{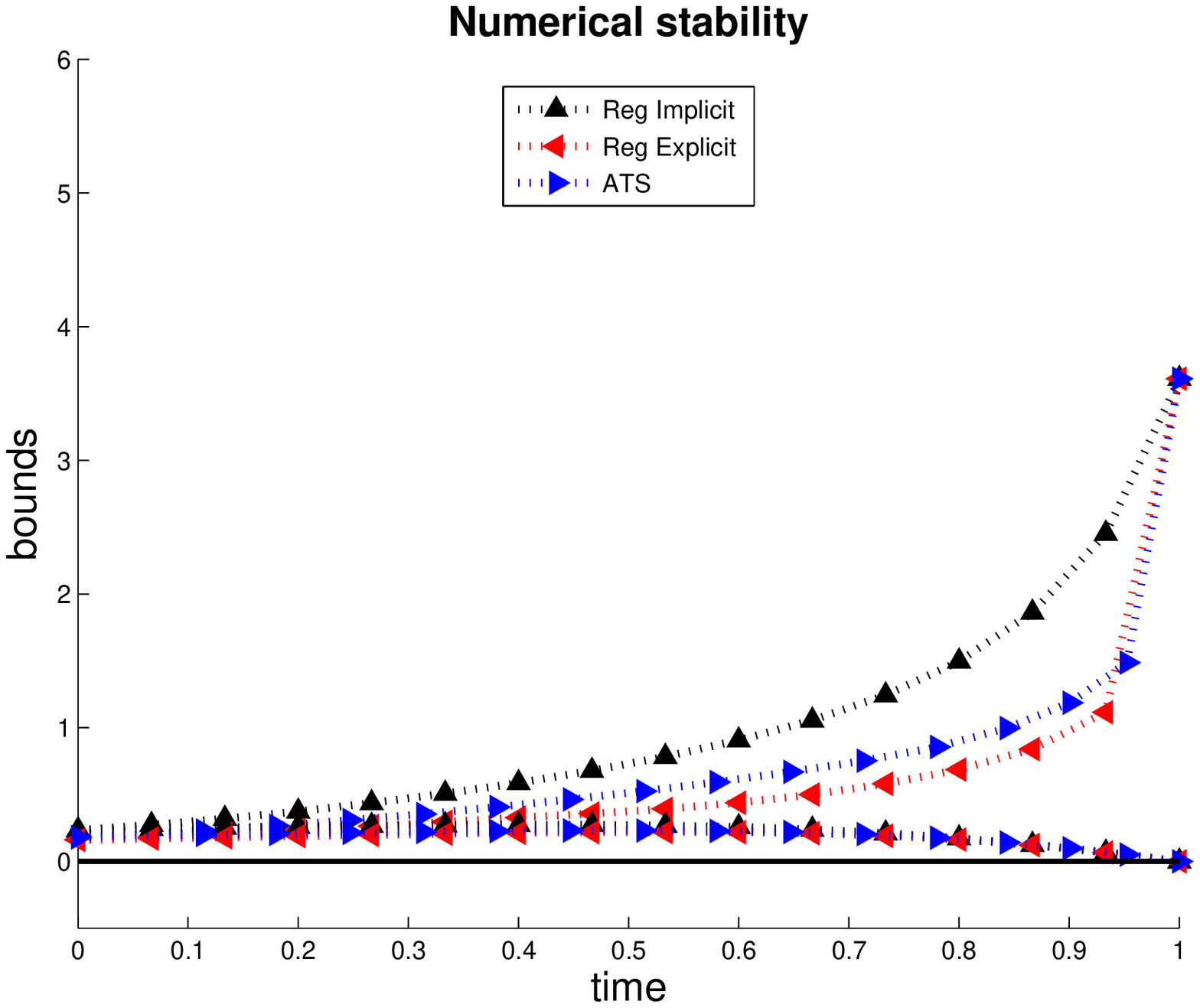}	
		\label{Figure---numerical.stability.c=3,61}
	}
	\subfigure [Case c=4]
	{	\includegraphics[scale=0.363]{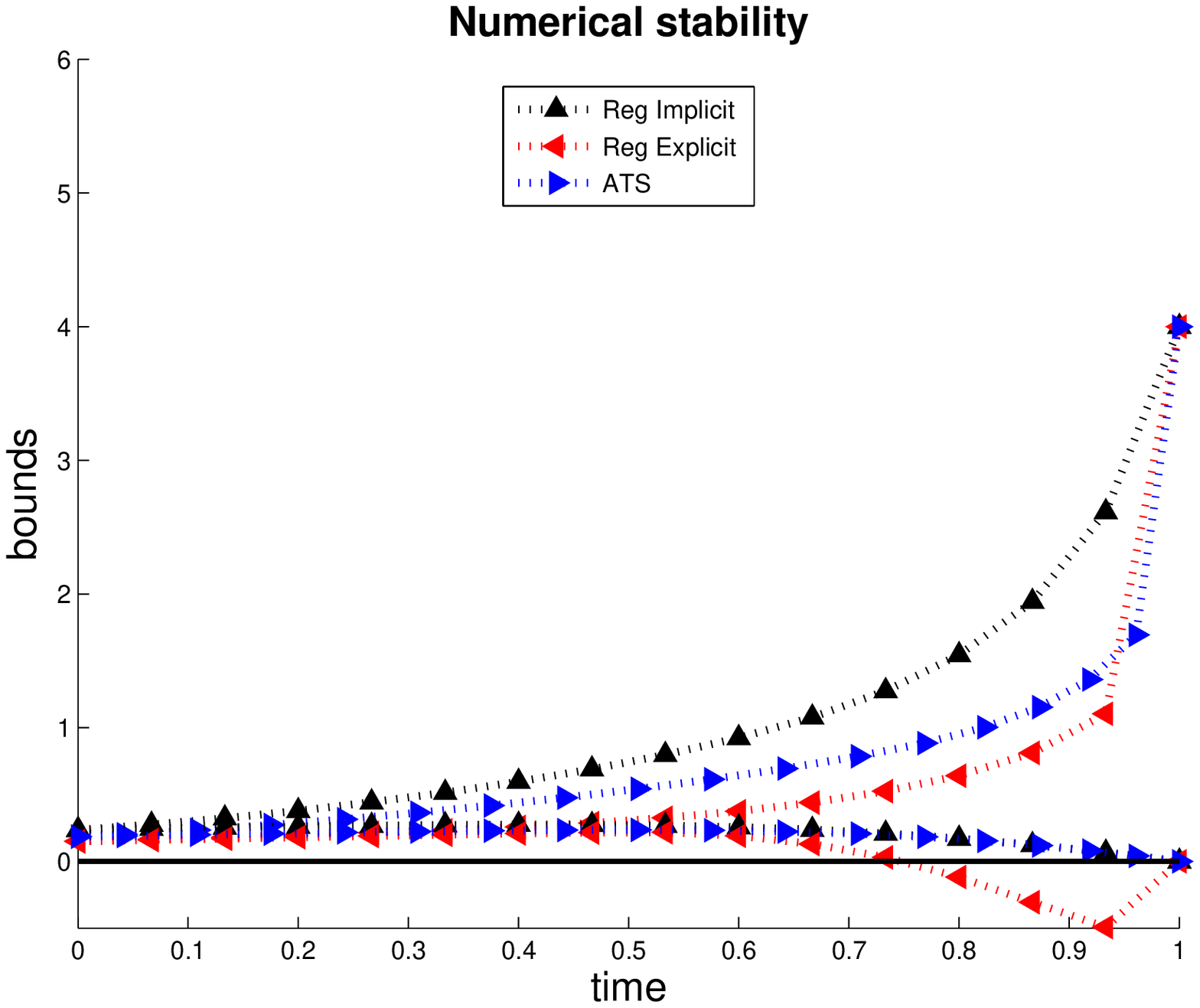}	
		\label{Figure---numerical.stability.c=4}
	}
	\subfigure [Case c=6]
	{	\includegraphics[scale=0.38]{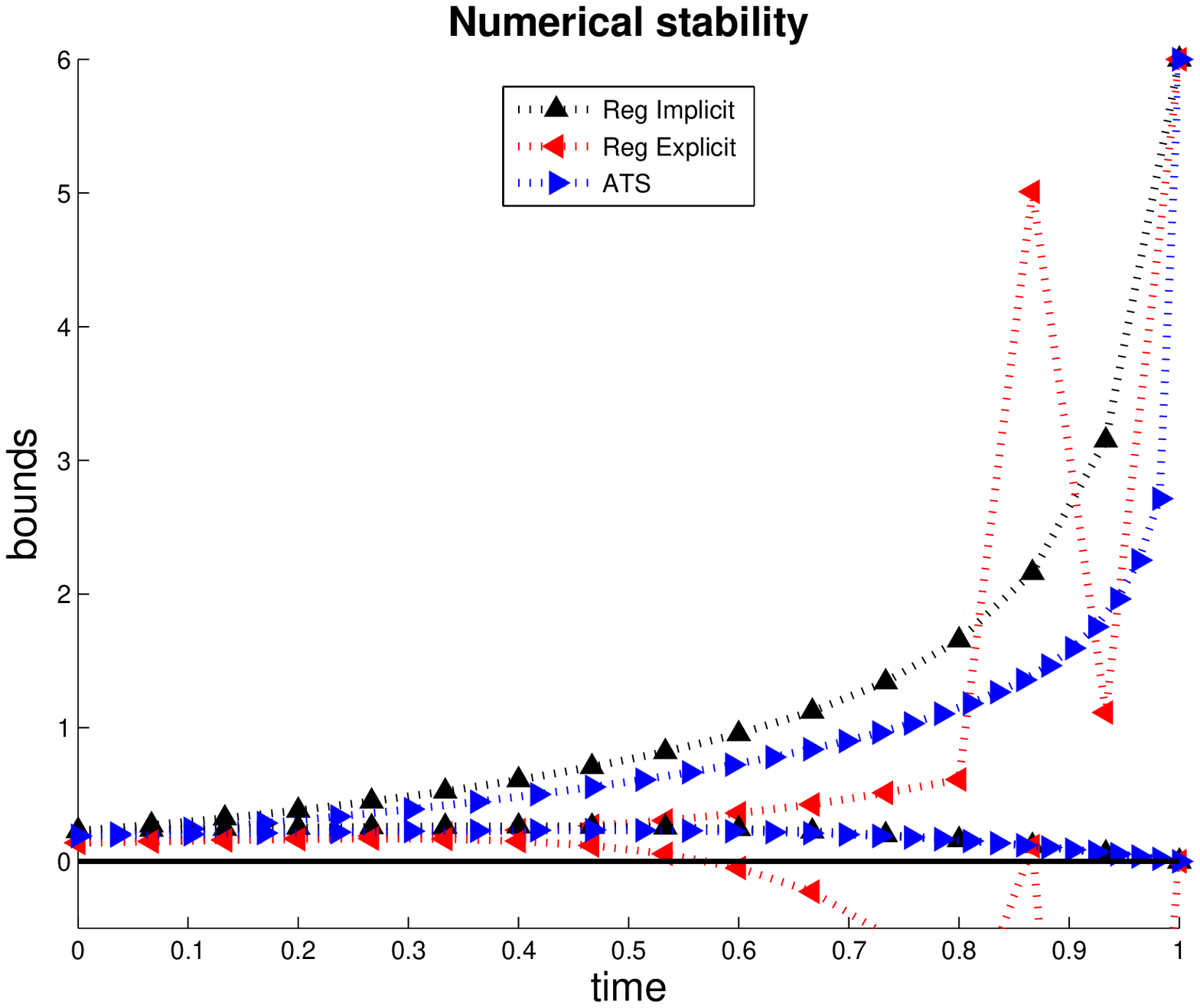}	
		\label{Figure---numerical.stability.c=6}	
	}		
	\caption{ Numerical stability. }
	\label{Figure---numerical.stability}
\end{figure}

\paragraph*{}
In each case, we have a different value of $\Norm{g}_\infty = c$, leading to a different time-grid $\pi^{n,+}$.
Here, the upper bounds give the norm and we see that the ATS scheme is always strongly numerically stable (it is designed for that), as is the implicit scheme. Also, from the comparison property and the fact that the terminal condition $\xi=0$ leads to the solution $0$, we know that the numerical approximations should remain positive.
When $c=3.6$, the standard explicit scheme is stable as well. Here, $\pi^{n,+}$ has as many time-steps as the regular time-grid, $N^n=n=15$, however the non-uniformity ratio of 1.4 indicates that they are not spread evenly across $[0,T]$.
As $c$ is increased to $4$ and then $6$, we see that the standard explicit scheme fails to be stable and to preserve positivity. 
The ATS grid handles the increased values and increased terms $f(\wb Y_\ip,\wb Z_i) \hip$ of the dynamics by placing more dates and increasing the non-uniformity : the time-steps are small when $\norm{\wb Y_\ip}_\infty$ is large, to guarantee stability, and then bigger when it is not computationally necessary to take smaller time-steps.

\subsection{Errors and computation time}

We look here at an example with still $X=W$ but with $f(y) = -y^3$ and $g(x) = \min(\abs{x}^2,c)$ with constant $c=7$. 
This driver $f$ satisfies \MonY, \RegY and \LipZ  with $m=3$, $M_y=0$, $L_y = 3/2$ and $L_z=0$. 
Because $M_y=0$, instead of using the time-steps $\hip$ given by \eqref{equation---reference--definition.of.hnip.1dim.with.comparison}, 
we should use here the partition designed in section \ref{section---extensions.to.overall-monotone.drivers}, with time steps given by \eqref{equation---reference--definition.of.hnip.1dim.overall-monotone.case}. 
For this, we choose to take $K=1$, and find that \GMonGr then holds with $M_y=-K^2 < 0$.

\begin{figure}[ht]
	\centering
	{	\includegraphics[scale=0.45]{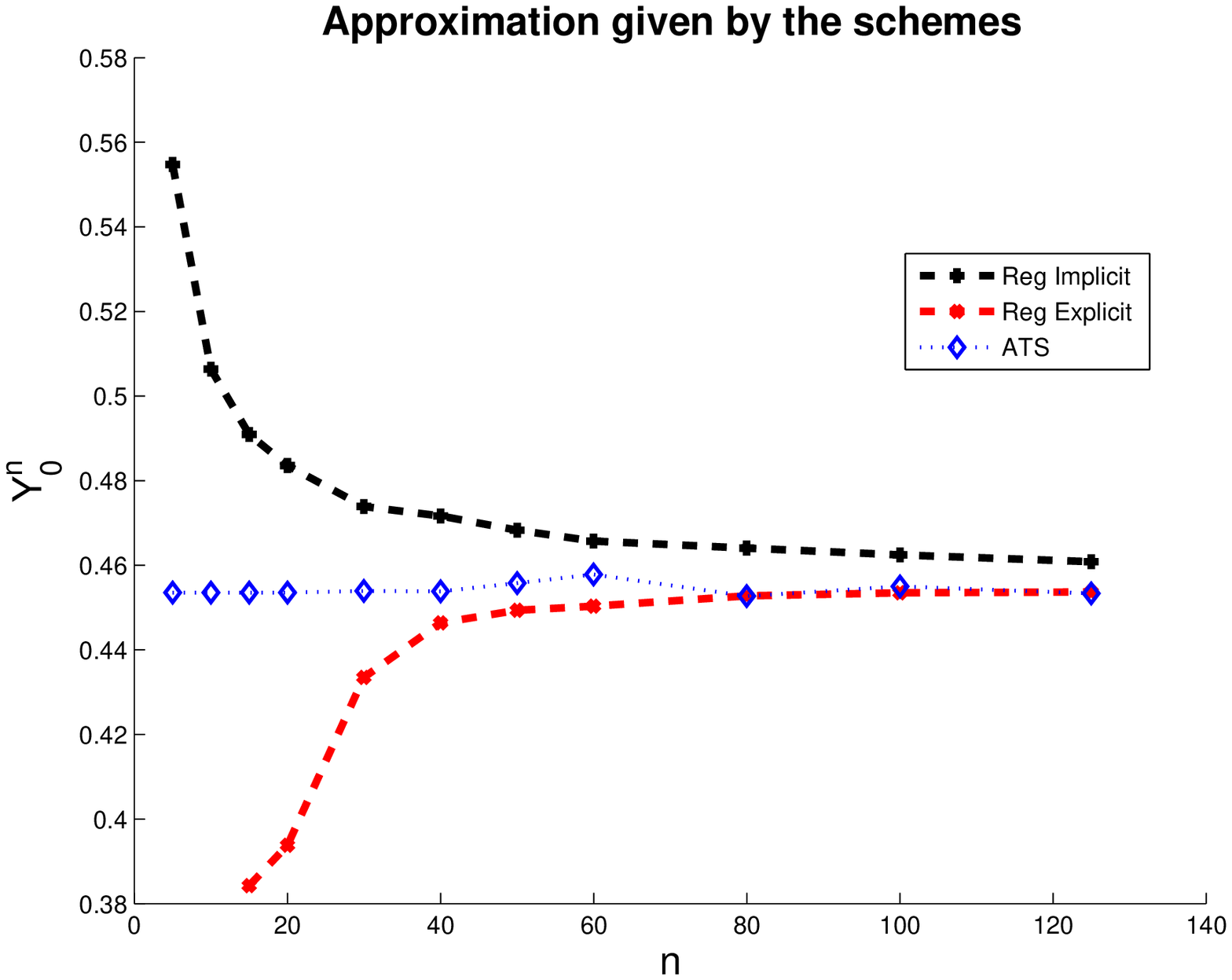}		
	}
	\caption{ Behaviour of the schemes ; $n \in [5,125]$. }
	\label{Figure---errors.Y0.vs.n}
\end{figure}

\paragraph*{}
On Figure \ref{Figure---errors.Y0.vs.n} we plot the value $Y^n_0$ obtained as a function of $n$. 
We observe that the standard implicit and explicit scheme tend to respectively overshoot and undershoot, which is understandable since the driver $f$ is decreasing. 
The ATS scheme returns value within the implicit-explicit interval, before finally merging with the regular explicit scheme when its ATS partition becomes a regular one.
This initial behaviour is somewhat reminiscent of that of the trapezoidal scheme in \cite{LionnetDosReisSzpruch2015} although the ATS scheme is entirely explicit. The reason why it does not undershoot as much as the standard explicit scheme is that it takes smaller time steps when the solution, and thus $f(\wb Y_\ip,\wb Z_i)$, is too big.

Figure \ref{Figure---errors.and.computation.time} on the other hand plots the error versus the computational time. The error is taken with respect to the average of the upper and lower values for $n=150$.

\begin{figure}[ht]
	\centering
	{	\includegraphics[scale=0.40]{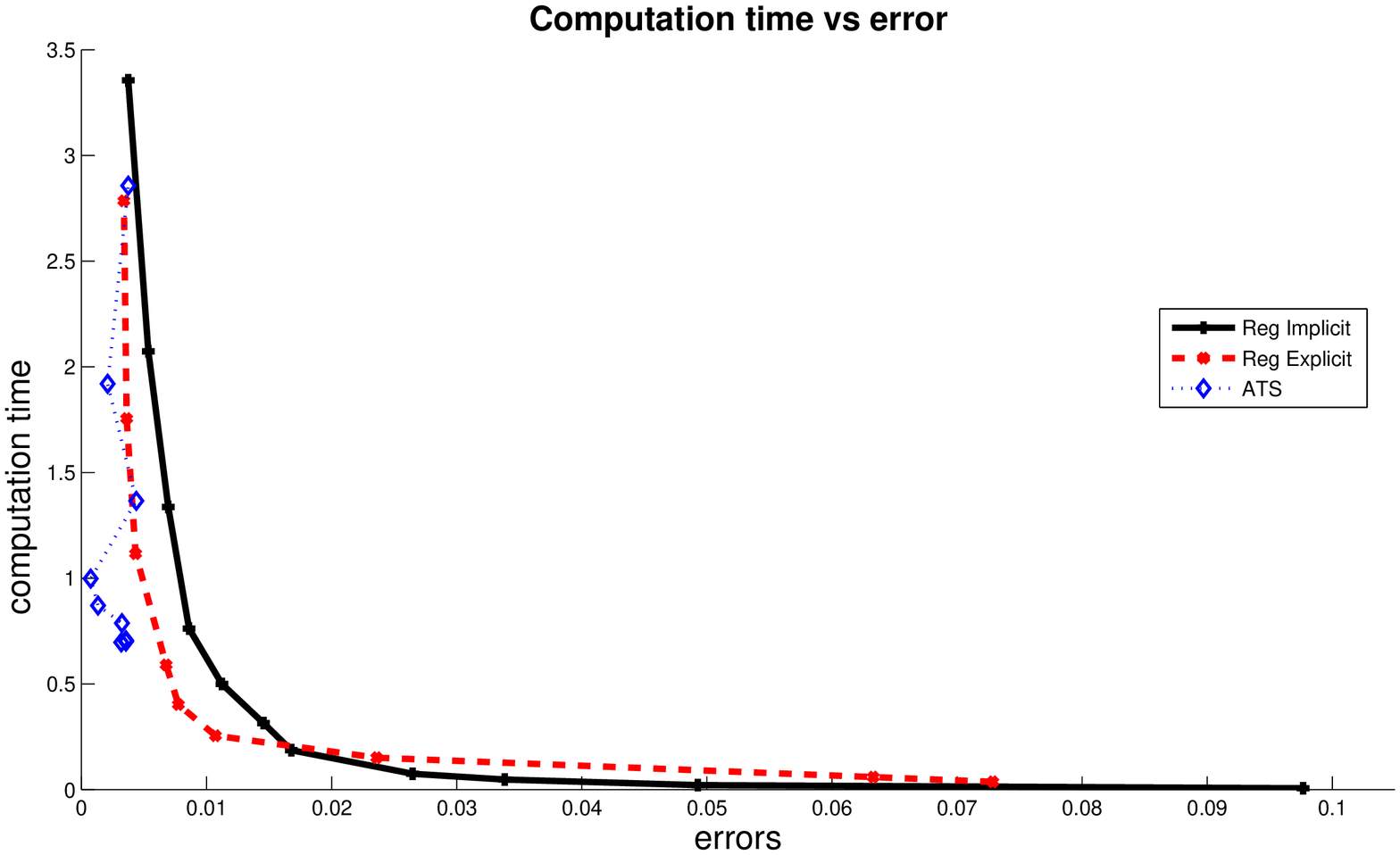}		
	}
	\caption{ Behaviour of the schemes ; $n \in [5,125]$. }
	\label{Figure---errors.and.computation.time}	
\end{figure}

\paragraph*{}
We see that the ATS therefore realizes a ``continuation by numerical stability'' of the explicit scheme from the range of high $n$ into the range of low $n$. It overall benefits from the lower cost of the standard explicit scheme (compared to the regular implicit scheme, for equal $N$), while avoiding explosion problems. In the range of low $n$'s, the ATS scheme computes a partition with more points ($N^n \ge n$), thus incurring a higher cost. It can however remain competitive with the implicit scheme because it allocates the computational effort more relevantly over the time interval. 

On the other hand, we note that the computation of the time-grid $\pi^n$ requires some knowledge of $g$ (for $\norm{\xi}_\infty$), 
as well as some detailed knowledge of $f$ (not merely the degree $m$ of the polynomial growth, but also (upper bounds for) the constants $M_y$, $L_y$ and $L_z$).
It thus requires more input (to be estimated by hand) when changing the driver and terminal condition, but then makes better use of the structure of the driver.

\appendix 
\section{Appendix}		\label{section---apprendix}

\subsection*{Proof of the estimate \eqref{equation---reference--bound.on.error.from.truncating.H}}

We provide here the proof that
	\begin{align*}	
		\max_{i = 0 \ldots N-1} E\left[ \Abs{ \frac{\Delta W_\tip}{\hip} - \Hip }^2 \right]	 \le C^H .
	\end{align*}

	\begin{proof}
		Denoting $\delta_\ip = \frac{\Delta W_\tip}{\hip} - \Hip$ and $G_\ip = \Delta W_\ip / \sqrt{\hip} \sim \cN(0,1)$, we have
			\begin{align*}
				\delta_\ip &= \frac{\Delta W_\tip}{\hip} - \frac{T^{R(\hip)} (\Delta W_\tip)}{\hip} 
							= \frac{G_\tip}{\sqrt{\hip}} - \frac{T^{r(\hip)} (G_\tip)}{\sqrt{\hip}} 											\\
					&=\frac{1}{\sqrt{\hip}} \big( G_\ip - T^{r(\hip)} (G_\ip) \big) \ 1_{\{\abs{G_\ip}_\infty > r(\hip)\}} ,	
			\end{align*}
		recalling that $R(\hip) = \sqrt{\hip} r(\hip)$.
		
		We now consider $h>0$ and a random variable $G \sim \cN(0,1)$. Using the fact that $\abs{x-T^{r(h)}(x)} \le \abs{x}$ and $\abs{x}_\infty \le \abs{x}$, we have
			\begin{align*}
				\abs{\delta_h} = \frac{1}{\sqrt{h}} \abs{G - T^{r(h)}(G)} \ 1_{\{\abs{G}_\infty > r(h)\}} \le \frac{1}{\sqrt{h}} \abs{G} \ 1_{\{\abs{G} > r(h)\}} .
			\end{align*}
		Using the Gaussian tail estimate 
			\begin{align*}
				\int_{\abs{x}>r} \abs{x}^2 \frac{e^{-\abs{x}^2/2}}{(2\pi)^{d/2}} dx \le C_{2,d} \ r^{d} e^{-\abs{r}^2/2} ,
			\end{align*}
		for some constant $C_{2,d} > 0$, we obtain
			\begin{align*}
				E[\abs{\delta_h}^2] \le \frac{1}{h} E\big[\abs{G}^2 \ 1_{\{\abs{G} > r(h)\}} \big] \le C_{2,d} \ \frac{1}{h} r(h)^{d} e^{-r(h)^2/2} = C_{2,d} \ \varphi(h).
			\end{align*}
		
		Given that $r(h) = \sqrt{2} \ln(1/h)$, we have
			\begin{align*}
				\varphi(h) = 2^{d/2} \ \frac{1}{h} \ln\Big(\frac{1}{h}\Big)^d \ e^{-\ln(1/h)^2} 		
						= 2^{d/2} \exp\bigg( \ln\Big(\frac{1}{h}\Big) + d \ln \ln\Big(\frac{1}{h} \Big) - \ln \Big(\frac{1}{h}\Big)^2 \bigg) .
			\end{align*}
		Since $h \in ]0,\hmax]$, $\varphi$ is continuous, and $\lim_{h\goesto 0} \varphi(h) = 0$, we see that $\varphi$ and therefore the upper bound for $E[\abs{\delta_h}^2]$ is bounded.
		Hence 
			\begin{align*}	
				\max_{i = 0 \ldots N-1} E\left[ \Abs{ \frac{\Delta W_\tip}{\hip} - \Hip }^2 \right]	 \le C^H .
			\end{align*}

	\end{proof}

\subsection*{Proof of proposition \ref{proposition---estimation.of.the.discretization.error}}

We provide here the proof of proposition \ref{proposition---estimation.of.the.discretization.error}
It is is split in two parts. 

	\begin{proof}[Proof of the estimate for the $Z$-component in Proposition \ref{proposition---estimation.of.the.discretization.error}] 
		First recall that from the martingale increment property of $H_\ip$ we have
			\begin{align*}
				E_i\left[ \int_\ti^\tip Z_u dW_u \ H_\ip^* \right] 
					= E_i\left[ \Big( Y_\tip + \int_\ti^\tip f(u,Y_u,Z_u) du \Big) \ H_\ip^* \right] \ .
			\end{align*}
		We write 
			\begin{align*}
				\overline{Z}_\ti - \widehat{Z}_i 
					&= E_i\Bigg[ \int_\ti^\tip Z_u dW_u \ \frac{\Delta W_\tip^*}{\hip} \Bigg] - E_i\Bigg[ Y_\tip H_\ip^* \Bigg]																						\\
					&= E_i\Bigg[ \int_\ti^\tip Z_u dW_u \ \frac{\Delta W_\tip^*}{\hip} \Bigg] - E_i\Bigg[ \int_\ti^\tip Z_u dW_u \ H_\ip^* \Bigg]															\\
							& \qquad + E_i\Bigg[ \Big( Y_\tip + \int_\ti^\tip f(Y_u,Z_u) du \Big) \ H_\ip^* \Bigg] - E_i\Bigg[ Y_\tip H_\ip^* \Bigg] 															\\
					&= E_i\Bigg[ \int_\ti^\tip Z_u dW_u \bigg(\frac{\Delta W_\tip}{\hip} - H_\ip \bigg)^* \Bigg]
							   + E_i\Bigg[ \int_\ti^\tip f(Y_u,Z_u) du \ H_\ip^* \Bigg] .
			\end{align*}
		We now take the square and use the Cauchy--Scwhartz inequality and the Itô isometry : 
			\begin{align*}
				\Abs{\overline{Z}_\ti - \widehat{Z}_i}^2			
					&\le 2 E_i\bigg[ \Abs{\int_\ti^\tip Z_u dW_u}^2 \bigg] E_i\bigg[ \Abs{\frac{\Delta W_\tip}{\hip} - H_\ip }^2 \bigg]																			\\
							&\hspace{1cm} + 2 E_i\bigg[ \Abs{\int_\ti^\tip f(Y_u,Z_u) du}^2 \bigg] E_i\Big[ \abs{\ H_\ip}^2 \Big] 																						\\
					&\le 2 E_i\bigg[ \int_\ti^\tip \abs{Z_u}^2 du \bigg] E_i\bigg[ \Abs{\frac{\Delta W_\tip}{\hip} - H_\ip }^2 \bigg]																							
							+ 2 E_i\bigg[ \hip \int_\ti^\tip \abs{f(Y_u,Z_u)}^2 du \bigg] \frac{\Lambda_i d}{\hip} 																												\\
					& = 2 E_i\bigg[ \int_\ti^\tip \abs{Z_u}^2 du \bigg] E\bigg[ \Abs{\frac{\Delta W_\tip}{\hip} - H_\ip }^2 \bigg]																							
							+ 2 \Lambda_i d E_i\bigg[ \int_\ti^\tip \abs{f(Y_u,Z_u)}^2 du \bigg] ,
			\end{align*}
		since $\frac{\Delta W_\tip}{\hip} - H_\ip$ is independent from $\F_i$.
		Hence, taking expectations, multiplying by $\hip \le \abs{\pi}$, using $\Lambda_i \le 1$ and summing, we obtain
			\begin{align*}
				\sum_{i=0}^{N-1} E\big[ \abs{\overline{Z}_\ti - \widehat{Z}_i}^2 \big] \hip
					&\le 2 \, \abs{\pi} \, E\bigg[ \int_0^T \abs{Z_u}^2 du \bigg] \max_{i=0 \ldots N-1} E\bigg[ \Abs{\frac{\Delta W_\tip}{\hip} - H_\ip }^2 \bigg]							\\
						&\qquad + 2 d \abs{\pi} \ E\bigg[ \int_0^T \Abs{f(Y_u,Z_u)}^2 du \bigg] .
			\end{align*}
		The fact that this is bounded by some $C^Z \abs{\pi}$ then follows from \eqref{equation---reference--bounds.on.X.Y.Z} 
		and from \eqref{equation---reference--bound.on.error.from.truncating.H}.
	\end{proof}

	\begin{proof}[Proof of the estimate for the $Y$-component of Proposition \ref{proposition---estimation.of.the.discretization.error}] 
		We first write
			\begin{align*}
				Y_\ti - \widehat{Y}_i 
					&= E_i\bigg[ \int_\ti^\tip f(Y_u,Z_u) du - f(\ti,Y_\tip,\widehat{Z}_i) \hip \bigg]					 																												\\
					&= E_i\bigg[ \int_\ti^\tip f(Y_u,Z_u) - f(Y_\tip,Z_u) du \bigg]																																									
								+ E_i\bigg[ \int_\ti^\tip f(Y_\tip,Z_u) - f(Y_\tip,\overline{Z}_\ti) du \bigg]																																\\
							& \qquad 	+ E_i\bigg[ f(Y_\tip,\overline{Z}_\ti) - f(\ti,Y_\tip,\widehat{Z}_i) \bigg] \hip .
			\end{align*}	
		We now take the square and use the Cauchy--Scwhartz inequality, \RegY and \LipZ :
			\begin{align*}
				\abs{Y_\ti - \widehat{Y}_i}^2
					&\le 3 E_i\bigg[ \hip \int_\ti^\tip \abs{ f(Y_u,Z_u) - f(Y_\tip,Z_u) }^2 du \bigg]																																	\\
							&\qquad + 3 E_i\bigg[ \hip \int_\ti^\tip \abs{ f(Y_\tip,Z_u) - f(Y_\tip,\overline{Z}_\ti) }^2 du \bigg]																							\\
							&\qquad	+ 3 E_i\Big[ \abs{ f(Y_\tip,\overline{Z}_\ti) - f(\ti,Y_\tip,\widehat{Z}_i) }^2 \Big] \hip^2 																								\\
					&\le 3 \hip E_i\bigg[ \int_\ti^\tip L_y^2  (1+\abs{Y_u}^{m-1}+\abs{Y_\tip}^{m-1})^2 \Abs{Y_u - Y_\tip}^2 du \bigg]																		\\
							&\qquad + 3 \hip E_i\bigg[  \int_\ti^\tip L_z^2 \abs{Z_u - \overline{Z}_\ti}^2 du \bigg]
									+ 3 \hip^2 E_i\big[ L_z^2 \abs{\overline{Z}_\ti - \widehat{Z}_i}^2 \big] .
			\end{align*}	
		Taking expectation and using further a Cauchy--Schwartz on the $Y$-term, we have
			\begin{align*}
				E\big[ \abs{Y_\ti - \widehat{Y}_i}^2 \big]
					&\le 3 L_y^2 \hip E\Big[ \hip (1 + 2 \sup_{\ti \le u \le \tip} \abs{Y_u}^{m-1})^2 \, \sup_{\ti \le u \le \tip} \abs{Y_u - Y_\tip}^2 \Big] 													\\	
							&\qquad + 3 L_z^2 \hip E\bigg[  \int_\ti^\tip \abs{Z_u - \overline{Z}_\ti}^2 du \bigg]
									+ 3 L_z^2 \hip^2 E\big[ \abs{\overline{Z}_\ti - \widehat{Z}_i}^2 \big] 																																		\\
					&\le 3 L_y^2 \hip^2 E\Big[ \big( 1 + 2 \sup_{\ti \le u \le \tip} \abs{Y_u}^{m-1} \big)^4 \Big]^\half E\Big[ \sup_{\ti \le u \le \tip} \abs{Y_u - Y_\tip}^4 \Big]^\half		\\
							&\qquad + 3 L_z^2 \hip E\bigg[  \int_\ti^\tip \abs{Z_u - \overline{Z}_\ti}^2 du \bigg]
									+ 3 L_z^2 \hip^2 E\big[ \abs{\overline{Z}_\ti - \widehat{Z}_i}^2 \big] .					
			\end{align*}	
		Summing over $i$, we have for some numerical constant $c$
			\begin{align*}
				\sum_{i=0}^{N-1} \frac{ E\big[ \abs{Y_\ti - \widehat{Y}_i}^2 \big] }{\hip}
					&\le 3 L_y^2 \sum_{i=0}^{N-1} \hip  E\Big[ 2^3 (1 + 2^4 \sup_{\ti \le u \le \tip} \abs{Y_u}^{4(m-1)}) \Big]^\half \textrm{REG}_{Y,4}(\hip)^\half			\\
							&\qquad + 3 L_z^2 E\bigg[ \sum_{i=0}^{N-1} \int_\ti^\tip \abs{Z_u - \overline{Z}_\ti}^2 du \bigg]
									+ 3 L_z^2 \sum_{i=0}^{N-1} E\big[ \abs{\overline{Z}_\ti - \widehat{Z}_i}^2 \big] \hip 																					\\
					&\le c L_y^2 \, T \, \big(1+\norm{Y}_{\S^{4(m-1)}}^{4(m-1)}\big)^\half \ \textrm{REG}_{Y,4}(\abs{\pi})^\half
									+ c L_z^2 \ \textrm{REG}_{Z,2}(\abs{\pi})																																					\\
							&\qquad + c L_z^2 \sum_{i=0}^{N-1} E\big[ \abs{\overline{Z}_\ti - \widehat{Z}_i}^2 \big] \hip .
			\end{align*}	
		The first part of the proposition guarantees that the third term is bounded by $C^Z \abs{\pi}$, 
		while \eqref{equation---reference--path.regularity.Y.Z} guarantees that $\textrm{REG}_{Y,4}(\abs{\pi})^\half \le C_{\textrm{PR:Y}} \abs{\pi}$ and 
		$\textrm{REG}_{Z,2}(\abs{\pi}) \le C_{\textrm{PR:Z}} \abs{\pi}$.
	\end{proof}

\bibliographystyle{alpha} 

\bibliography{ATSschemeBiblio} 


\end{document}